\begin{document}

\begin{center}
{\LARGE\bf Quasi-modular forms attached to elliptic curves, I
%\footnote{The text is under construction. Any comment is welcome.} 
%and differential equations for modular forms
}
\\
\vspace{.25in} {\large {\sc Hossein Movasati}} \\
Instituto de Matem\'atica Pura e Aplicada, IMPA, \\
Estrada Dona Castorina, 110,\\
22460-320, Rio de Janeiro, RJ, Brazil, \\
E-mail:
{\tt hossein@impa.br} \\
{\tt www.impa.br/$\sim$hossein}
\end{center}
%%%%%%%%%%%%%%%%%%%%%%%%%%%%%%%%%%%%%%%%%%%%%%%%%%%%%%%%%%%%%%%%%%%%%%%%%%%
\begin{abstract}
%Modular forms can be interpreted as sections of tensor product of the canonical bundle of modular curves and this does not exist for quasi modular forms. 
In the present text we give a geometric interpretation of quasi-modular forms using moduli 
of elliptic curves with marked elements in their de Rham cohomologies. In this way differential equations
of modular and quasi-modular forms are interpreted as vector fields on such moduli spaces and they can be calculated
from the Gauss-Manin connection of the corresponding universal family of elliptic curves. For the full modular group such a differential equation is calculated and it turns out to be the Ramanujan differential equation between Eisenstein series. 
We also explain  the notion of period map constructed
from elliptic integrals. This turns out to be the bridge between the algebraic notion of a quasi-modular form and the one as a holomorphic
function on the upper half plane. In this way we also get another interpretation, essentially due to Halphen, of the Ramanujan 
differential equation in terms of hypergeometric functions. The interpretation of quasi-modular forms as sections of jet bundles and some related 
enumerative problems are also presented.

%As an application, we show that the problem of finding differential  and polynomial equations 
%for modular forms is equivalent to the problem of constructing such  moduli of elliptic 
%curves and calculating  Gauss-Manin connections. 
%Our approach gives us a geometric interpretation for differential/quasi modular forms.
\end{abstract}
%---------------------------------------------------------------------
\newtheorem{theo}{Theorem}
\newtheorem{exam}{Example}
\newtheorem{coro}{Corollary}
\newtheorem{defi}{Definition}
\newtheorem{prob}{Problem}
\newtheorem{lemm}{Lemma}
\newtheorem{prop}{Proposition}
\newtheorem{rem}{Remark}
\newtheorem{conj}{Conjecture}
\newtheorem{exe}{Exercise}
%-----------------------------------------------------------------------
\newcommand\diff[1]{\frac{d #1}{dz}} %Differential operator
\def\End{{\rm End}}              %Endomorphism group
\def\hol{{\rm Hol}}
\def\sing{{\rm Sing}}            %The set of singularities
\def\spec{{\rm Spec}}            %The spectrume
\def\cha{{\rm char}}             %Charracteristic
\def\Gal{{\rm Gal}}              %The Galois group
\def\jacob{{\rm jacob}}          %the Jacobian ideal
\def\Z{\mathbb{Z}}                   %Integer  numbers
\def\O{{\cal O}}                       %Structural sheaf

\def\C{\mathbb{C}}                   %Complex numbers
\def\as{\mathbb{U}}                  %Some affine space
\def\ring{{\sf R}}                             %A ring 
\def\R{\mathbb{R}}                   %real numbers
\def\N{\mathbb{N}}                   %natural numbers
\def\A{\mathbb{A}}                   %affine space C^n

\def\D{\mathbb{D}}                   %Unit disc
\def\uhp{{\mathbb H}}                %upper half plane
\newcommand\ep[1]{e^{\frac{2\pi i}{#1}}}% unipotent numbers
\newcommand\HH[2]{H^{#2}(#1)}        %Hodge structures
\def\Mat{{\rm Mat}}              %Matrices
\newcommand{\mat}[4]{
     \begin{pmatrix}
            #1 & #2 \\
            #3 & #4
       \end{pmatrix}
    }                                %two by two matrices
\newcommand{\matt}[2]{
     \begin{pmatrix}                 % one by two matrix
            #1   \\
            #2
       \end{pmatrix}
    }
\def\ker{{\rm ker}}              %kernel
\def\cl{{\rm cl}}                %Chern class
\def\dR{{\rm dR}}                %The subindex dR standing for de Rham
                                     %cohomology.

\def\hc{{\mathsf H}}                 %The set of Hodge cycles.
\def\Hb{{\cal H}}                    %Hodge bundle
\def\GL{{\rm GL}}                %The liner group
\def\pedo{{\cal P}}                  %Period domain
\def\PP{\tilde{\cal P}}              %the period domain/ discrete group
\def\cm {{\cal C}}                   %the set of 
\def\K{{\mathbb K}}                  %Field representing R or C
\def\k{{\mathsf k}}                  %Arbitrary field
\def\F{{\cal F}}                     %Hodge filtration bundle
\def\M{{\cal M}}
\def\RR{{\cal R}}
\newcommand\Hi[1]{\mathbb{P}^{#1}_\infty}%the hyperplane at infinity
\def\pt{\mathbb{C}[t]}               %Polynomials in t
\def\W{{\cal W}}                     %weight filtration
\def\Af{{\cal A}}                    %The space of tame polynomials.
\def\gr{{\rm Gr}}                %graded pieces
\def\Im{{\rm Im}}                %image
\newcommand\SL[2]{{\rm SL}(#1, #2)}    %SL(2,Z)
\newcommand\PSL[2]{{\rm PSL}(#1, #2)}  %PSL(2,Z)
\def\Res{{\rm Res}}              %Residue

\def\L{{\cal L}}                     %The moduli of polarized lattices in a
                                     %fixed vector spaces.
\def\Aut{{\rm Aut}}              %Automorphism group of a vectorspace
\def\any{R}                          %Any subring of the field of complex
                                     %numbers.
\newcommand\ovl[1]{\overline{#1}}    %Conjugation of #1.

\def\per{{\sf  pm}}  
\def\T{{\cal T }}                    %Tangent space
\def\tr{{\sf tr}}                 %Transposition of matrices
\newcommand\mf[2]{{M}^{#1}_{#2}}     %New modular functions
\newcommand\bn[2]{\binom{#1}{#2}}    %Binomial
\def\ja{{\rm j}}                 %j of a two by two matrix
\def\Sc{\mathsf{S}}                  %Simple cycles
\newcommand\es[1]{g_{#1}}            %Eisenstein series
\newcommand\V{{\mathsf V}}           %Milnor vector space
\newcommand\Ss{{\cal O}}             %Structural sheaf
\def\rank{{\rm rank}}                %rank of a module
\def\diag{{\rm diag}}
\def\BM{{\sf H}}
\def\Fi{{S}}
\def\Ra{{\rm R}}

\def\Q{{\mathbb Q}}                   %Rational  numbers

\def\P{\mathbb{P}}

%%%%%%%%%%%%%%%%%%%%%%%%%%%%%%%%%%%%%%%5
\tableofcontents
%------------------------------------------
\section{Introduction}
The objective of this note is to develop the theory of quasi-modular forms in the framework 
of Algebraic Geometry. The request for an algebra which contains the classical modular forms and which is closed
under derivation leads in a natural way to the theory of quasi-modular forms and this can 
be the main reason why the name {\it differential modular form} is more natural 
(see \cite{maro05, ho06-2}). The literature on modular forms and their applications is 
huge and a naive mind may look for similar applications of quasi-modular forms. At the 
beginning of our journey, we may think that we are dealing with  a new theory. 
However, the main examples of quasi-modular forms and their differential equations go back to 19th century,  due to G. Darboux (1878) and G. Halphen (1881). 

The development of the theory of modular forms as holomorphic functions in the 
Poincar\'e upper half plane has shown that many of problems related to modular forms can be proved  if we
take a modular form into Algebraic Geometry and interpret it  in the following way: 
a modular form of weight $m$ is a section of $m$-times tensor power of 
the line bundle $F$ on compactified moduli spaces of elliptic curves, where the fiber of $F$ at the elliptic curve $E$ is defined to be ${\rm Lie}(E)^\vee$. This is equivalent to say that a modular form is 
a  function from the  pairs $(E,\omega)$ to $\k$, where $E$ is an elliptic 
curve defined over a field $\k$ of arbitrary characteristic  and $\omega$ is a 
regular differential form on $E$,  such that 
$f(E,a\omega)=a^{-m}f(E,\omega)$ for all $a\in \k^*$. Some additional properties regarding the 
degeneration of the pair $(E,\omega)$ is also required. The first interpretation can be generalized to
the context of quasi-modular forms using the notion of jet bundles (see Appendix \ref{jetbundle}). 
We found it much more convenient for calculations  to generalize the later interpretation 
to the context of quasi-modular forms. The differential form $\omega$ is replaced with an 
element in the first  algebraic de Rham cohomology of $E$ such that it is not represented by a regular differential form. 
Algebraic de Rham cohomology is introduced for an arbitrary smooth 
variety by A.  Grothendieck  (1966)  in \cite{gro66}  after a work of Atiyah and Hodge (1955). Apart from the multiplicative 
group $\k^*$, we have also the additive group of $\k$ acting on such pairs and the 
corresponding functional equation of a quasi-modular form. In order to use the algebraic  de Rham 
cohomology we have to assume that the field $\k$ is of characteristic zero. 
It turns out 
that the Ramanujan relations between Eisenstein series can be derived from the Gauss-Manin 
connection of  families of elliptic curves and such  series in the $q$-expansion form are uniquely and
recursively determined by the Ramanujan relations.
Looking in this way, we observe that the theory
is not so much new. We find the Darboux-Halphen differential equation which gives rise to the theory of quasi-modular forms for 
$\Gamma(2)$.

We have given some applications of our effort; in order to find differential  and polynomial equations 
for modular forms attached to congruence groups it is sufficient to construct explicit affine coordinates on the moduli of elliptic 
curves enhanced with certain torsion point structure, and then to calculate its Gauss-Manin connection. As the geometrization of modular forms was an important tool in understanding many difficult problems in number theory, such as the Taniyama-Shimura conjecture and its solution which is known under
modularity theorem, I hope that the geometrization presented 
in this text helps to understand many enumerative problems related to quasi-modular, and even modular, forms (see Appendix \ref{8nov2010}). 
However, the main justification in our mind for writing a text which apparently deals with the mathematics of a century ago, is to prepare the ground for a dreaming program: to develop a theory attached to an arbitrary 
family of varieties similar to the modular form theory attached to elliptic curves. 
Apart from  Siegel and Hilbert modular forms attached to Abelian varieties, such a theory is 
under construction for a certain family of Calabi-Yau varieties which appears in mathematical 
physics, see for  instance \cite{ho11}. The text is written in such a way that the way for generalizations
becomes smoother.

The reader who wants to have a fast overview of the text is invited to 
read the small introductions at the beginning of each section. There, we have tried to
write the content of each section in a down-to-earth way. In this way it turns out that the mathematics 
presented in this text go back to a century ago, to mathematicians like  Gauss, Halphen, Ramanujan, Abel, Picard, Poincar\'e and many others. 
The reader gets only a flavor of the history behind the mathematics of  the present text. A full account of the works of all these respected mathematicians would require  a 
deep reading of many treatises that they have left to us. Another relatively fast reading of the present text would be in the following order: 
\S\ref{int01}, \S\ref{int02}, \S\ref{int03}, \S\ref{ramode}, \S\ref{halphensection}, \S\ref{8nov2010}, \S\ref{int04}, \S\ref{int05}, \S\ref{int06} and the entire \S\ref{ellipticsection}.

%The reader is referred to  
%\cite{maro05, ho06-2} for the classical definition of quasi-modular forms as 
% analytic functions on the Poincar\'e upper half plane. Our aim is to construct a 
%machinery for the theory of quasi-modular forms  similar to the one available for modular 
%forms.

We assume that the reader has a  basic knowledge in algebraic geometry, complex analysis in 
one variable and Riemann surfaces.
At the end of each sub section the reader finds many exercises with different degrees of difficulty. 
The reader who is interested on the content of this note and who is not worry with the 
details, may skip them. We have collected such exercises in order to avoid statements like 
{\it it is left to the reader}, {\it it is easy to check} and so on. The students are highly 
recommended to do some good piece of the exercises.

The present text arose from my lecture notes at Besse summer school on quasi-modular forms, 2010. 
Here, I would like to thank the organizers, and in particular, Emmanuel Royer and Fran\c cois Martin for the wonderful job they did.
The text is also used in a mini course given at IMPA during the summer school 2011.  
Prof. P. Deligne sent me some comments on the first and final draft of the present text which is essentially  included in 
Appendix \ref{jetbundle}. Here, I would like to 
thank him for his clarifying comments.  My sincere thanks go to Prof. E. Ghys who drew my attention to the works of Halphen. Finally, I would like to apologize from all the people whose works is related to the topic of the present
text and I do not mention them. There is a huge literature on elliptic curves and modular forms and I am sure that I have missed many related works. 
%%%%%%%%%%%%%%%%%%%%%%%%%%%%%%%%%%%%%%%%%%%%%%%%%%%

\section{De Rham cohomology of smooth varieties}
\label{derhamsection}
\subsection{Introduction}
\label{int01}
In many calculus books we find tables of integrals and there we never find a formula for elliptic integrals 
\begin{equation}
 \label{odeio}
\int_{a}^b\frac{Q(x)dx}{\sqrt{P(x)}},
 \end{equation}
 where $P(x)$ is a degree three 
 polynomial in one variable $x$ and with real coefficients, for simplicity we assume that it has real roots,
 and $a,b$ are two consecutive roots of $P$ or $\pm\infty$. Since Abel and Gauss it was known that if we choose $P$ randomly (in other words for generic $P$) 
such integrals 
cannot be calculated in terms of until then well-known functions (for a polynomial $P$ with a double root we can calculate
the elliptic integrals, see Exercise \ref{6nov10}. For other particular examples of $P$  we have some formulas calculating elliptic integrals in terms of the values of the Gamma 
function on rational numbers. The Chowla-Selberg theorem, see for instance Gross's article \cite{gro79},  describes this phenomenon in a complete way). 
It was also well known that any such elliptic integral, say it $\int_\delta Q(x)\omega$ with $\omega:=\frac{dx}{\sqrt{P(x)}}$,  
with the integration domain  $\delta=[a,b]$ and polynomial $P$ fixed, is a linear combination  of two integrals 
$\int_\delta \omega$ and $\int_{\delta}x\omega$, that is, it is possible to calculate effectively two numbers $r_1,r_2\in\R$ such that  
\begin{equation}
 \label{boaa}
\int_\delta Q(x)\omega= r_1\int_{\delta}\omega+r_2\int_\delta x\omega.
\end{equation}
For instance, take 
\begin{equation}
\label{8.11.2010}
P(x)=4(x-t_1)^3-t_2(x-t_1)-t_3, \ t_1,t_2,t_3\in\R
\end{equation}
(in \S\ref{weierstrasssection} we explain why we write $P$ in this form). For examples of the equality (\ref{boaa}) see Exercise \ref{maraomid}. 
The equalities there, are written just by neglecting $\int_{\delta}$. 
The next historical step was to consider the integration on the complex domain instead of integration on the real interval 
(this immigration from the real to complex domain is one of the basic tools for Riemann to formulate the so called "Riemann hypothesis"). 
Now, the integration is on any path which
connects to roots or $\infty$ to each other. Soon after it was invented the new variable $y$ representing the quantity $\sqrt{P(x)}$ and 
the integration took place on a closed path in the 
topological space $E:=\{(x,y)\in \C^ 2| y^2=P(x)\}$. The integrand $Q(x)\frac{dx}{y}$  is now called a differential 1-form and the topological space
$E$ is called  an elliptic curve (for a brief description of how the study of integrals contributed to the development of Algebraic Topology, see the introduction of \S\ref{ellipticsection}).
The study of elliptic, and later Abelian and multiple, integrals by Abel, Picard and Poincar\'e prepared the ground for the definition of the de Rham cohomology of varieties and 
its algebraic version due to Atiyah, Hodge and Grothendieck. 
Nowadays, algebraic geometers prefer to avoid the usage of integrals for expressing an equality like (\ref{boaa}). Instead, they say that the algebraic de Rham cohomology of the elliptic curve $y^2=P(x)$ is a two dimensional 
vector space generated by two differential forms $\frac{x^idx}{y},\ i=0,1$.

In this section we want to explain equalities like (\ref{boaa}) without mentioning integrals and domains of integration. Moreover, we work with a ring $\ring$ instead of the field of 
real numbers. We assume a basic knowledge in  Algebraic Geometry. The reader may consult the book of 
Hartshorne \cite{har77}, for any lacking definition or proof. 
We need to work with families of elliptic curves  and so we use
projective and affine schemes over a
commutative ring $\ring$ with multiplicative identity element $1$. We assume  that 
$\ring$ is an integral domain, that is 
$1\not=0$ and it is without zero divisors. 
For simplicity, we take a field of characteristic zero  and we  consider $\ring$ as a 
finitely generated $\k$-algebra.

Let $E$ be a smooth   curve of genus $g$ over $\k$. The algebraic de Rham cohomologies 
$$
H^i_\dR(E), i=0,1,2
$$
are $\k$-vector spaces of dimensions respectively $1,2g$ and $1$. We have $H^0_\dR(E)=\k$, 
an isomorphism
$$
{\rm Tr}: H^2_\dR(E)\cong\k
$$
and a bilinear map
$$
H^1_\dR(E)\times H^1_\dR(E) \to H^2_\dR(E)
$$ 
The map $\rm Tr$ composed with the bilinear map gives us:
$$
\langle\cdot,\cdot\rangle: H^1_\dR(E)\times H^1_\dR(E) \to \k
$$
which is non-degenerate and anti symmetric. We call it the intersection bilinear form.  We have also a natural filtration of $H^1_\dR(E)$ which is called the Hodge filtration:
$$
\{0\}=F^2\subset F^1\subset F^0=H^1_\dR(E)
$$
Its non-trivial piece $F^1$ is generated by a regular differential form (a differential form
 of the first kind). 
The objective of this section is to define all these in a  down-to-earth manner. 
Grothendieck's original article  \cite{gro66} is still a main source for the definition of 
algebraic de Rham cohomology.

\begin{exe}\rm
\label{6nov10}
\begin{enumerate}
 \item 
Calculate the integral (\ref{odeio}) for $P$ with a double root.
\item
For particular examples of $P$ there are some formulas for elliptic integrals in terms of the values of the Gamma function on rational numbers.
Collect some of these formulas and explain why the formula holds. 
\item
Show that $\int_{\delta} \frac{dx}{\sqrt{P(x)}}$, where $\delta$ is an interval between two consecutive real roots of $P$ or $\pm\infty$, and $\deg(P)=4$, 
can be calculated in terms of elliptic integrals. 
\end{enumerate}

\end{exe}

%%%%%%%%%%%%%%%%%%%%%%%%%%%%%%%%%%%%%%%%%%%%%%%%%%%%%%%%%%%5555
\subsection{Differential forms}
Let $A$ be a $\k$-algebra and  $\ring \to A$ be a morphism of $\k$-algebras. 
Using this morphism, $A$ can be seen as an $\ring$-algebra.  We assume that $A$ as an $\ring$-algebra is finitely generated. 
    
Let $\Omega_{A/\ring}$ denote the module of relative (K\"ahler) differentials, that is, $\Omega_{A/\ring}$ is the quotient of the  $A$-module freely generated by symbols $dr,\ r\in A$, modulo its submodule generated by 
$$
dr,\ r\in \ring,\ d(ab)-adb-bda,\ d(a+b)-da-db,\ a,b\in A.
$$ 
The $A$-module $\Omega_{A/\ring}$ is finitely generated and it is equipped with the derivation 
$$
d:A\to \Omega_{A/\ring},\ r\mapsto dr.
$$
It has the universal property that for any $\ring$-linear 
derivation $D:=A\to M$ into a $A$-module $M$, there is a unique $A$-linear map $\psi: \Omega_{A/\ring}\to M$ such that 
$D=\psi\circ d$.

Let $X=\spec(A)$ and $T=\spec(\ring)$ be the corresponding affine varieties over $\k$ and $X\to T$ be the map obtained by 
$\ring\to A$.  We will mainly use the Algebraic Geometry notation 
$\Omega_{X/T}^1:=\Omega_{A/\ring}$. Let 
$$
\Omega_{X/T}^i={\bigwedge_{k=1}^i}\Omega_{A/\ring}. 
$$ 
be the $i$-th wedge product of $\Omega_{X/T}$ over $A$, that is, $\Omega_{X/T}^i$ is the quotient of the $A$-module 
freely generated by the symbols $\omega_1\wedge\omega_2\wedge\cdots\wedge\omega_i$ modulo its submodule generated  by elements which make $\wedge$ $A$-linear in each $\omega_i$ and 
$$
\omega_1\wedge \cdots\wedge\omega_j\wedge\omega_{j+1}\wedge\cdots\wedge\omega_i=0,\ \  \text{for }\omega_j=\omega_{j+1}
%-\omega_1\wedge \cdots\wedge\omega_{j+1}\wedge\omega_{j}\wedge\cdots\wedge\omega_i
$$ 
It is convenient to define 
$$
\Omega_{X/T}^0:=A
$$
The differential operator
$$
d_i: \Omega_{X/T}^i\to \Omega_{X/T}^{i+1}
$$
is defined by assuming that it is $\ring$-linear and
$$
d_i(a da_1\wedge\cdots\wedge da_i)=da\wedge da_1\wedge\cdots\wedge da_i, \   \ a,a_1,\ldots,a_i\in A.
$$
Sometimes it is convenient to remember that $d_i$'s are defined relative to $\ring$. 
One can verify easily that $d_i$ is in fact well-defined and satisfy all the properties of the classical properties of 
the differential operator on differential forms on manifolds. From now on we drop the subscript $i$ and simply write $d$. If 
$\ring=\k$ is a field then we write $X$ instead of $X/T$. 
\begin{exe}\rm
\begin{enumerate}
 \item 
Prove the universal property of the differential map $d:A\to \Omega_{A/\ring}$. 
\item
Prove the following properties of the wedge product: For $\alpha\in \Omega^{i}_{X/T}, \ \beta\in \Omega^{j}_{X/T}, \ \gamma\in \Omega^{r}_{X/T}$
$$
(\alpha\wedge\beta)\wedge\gamma=\alpha\wedge(\beta\wedge\gamma),
$$
$$
\alpha\wedge\beta\wedge \gamma=(-1)^{ij+jr+ir}\gamma \wedge \beta\wedge \alpha,\ 
$$
\item
Prove that $d\circ d=0$.
\item
For $\alpha\in \Omega^{i}_{X/T}, \ \beta\in \Omega^{j}_{X/T}$ we have:
$$
d(\alpha\wedge \beta)=(d\alpha)\wedge\beta+(-1)^i\alpha\wedge(d\beta). 
$$
\end{enumerate}

\end{exe}

%%%%%%%%%%%%%%%%%%%%%%%%%%%%%%%%%%%%%%%%%%%%%%%%%%%%%%%%%%%%%5
\subsection{De Rham cohomology}
After the definition of differential forms, we get the de Rham complex of $X/T$, namely:
$$
\Omega_{X/T}^0\to \Omega_{X/T}^1\to \cdots \Omega_{X/T}^i\to \Omega_{X/T}^{i+1}\to\cdots
$$
Since $d\circ d=0$, we  can define the de Rham cohomologies
$$
H_{\dR}^i(X/T):=\frac{\ker (\stackrel{d}{\Omega_{X/T}^i\to \Omega_{X/T}^{i+1}})}{\Im (\stackrel{d}{\Omega_{X/T}^{i-1}\to \Omega_{X/T}^{i}}) }.
$$
\begin{exe}\rm
\begin{enumerate}
 \item 
Let $m$ be the number of generators of the $\ring$-algebra $A$. Show that 
for $i\geq m+1$ we have $\Omega_{X/T}^{i}=0$ and hence $H_{\dR}^i(X/T)=0$.
\item
 Let $A=\ring[x_1,x_2,\ldots,x_n]$. In this case, we  usually use the notation $\A_\ring^n:=\spec(A)$. The $A$-module
$\Omega^{1}_{\A^n_\ring}$ is freely generated by the elements $dx_1,dx_2,\ldots,dx_n$. Prove that 
$$
H^i(\A_\ring^n)=0,\ i=1,2,\ldots
$$
This is Exercise 16.15 c, p. 414. of  \cite{ei95}.
\item
Let us come back to the case of an arbitrary $A$. Let $a_1,a_2,\ldots, a_m\in A$ generate the  $\ring$-algebra $A$. Define 
$$
I=\{P\in\ring[x_1,x_2,\ldots,x_m]\mid P(a_1,a_2,\ldots,a_m)=0\}.
$$
The set $I$ is an ideal of $\ring[x_1,x_2,\ldots,x_m]$ and we have 
$$
A\cong \ring[x_1,x_2,\ldots,x_m]/I
$$
$$
\Omega^i_{X/T}\cong \Omega^{i}_{\A^n_\ring}/(dI\wedge \Omega^{i-1}_{\A^n_\ring}+I\Omega^{i}_{\A^n_\ring}  )
$$
where by $dI\wedge \Omega^{i-1}_{\A^n_\ring}+I\Omega^{i}_{\A^n_\ring}$ we mean the $A$-module generated by 
$$
dr_1\wedge \omega_1+r_2\omega_2,\ r_1,r_2\in I,\ \omega_1 \in  \Omega^{i-1}_{\A^n_\ring},\ \omega_2\in \Omega^{i}_{\A^n_\ring} 
$$
\item
Discuss conditions on $A$ such that $H^0(X/T)=\ring$. For instance, show that if $\ring=\k$ is an algebraically closed  
field of characteristic zero and $X$ is an irreducible reduced variety over $\k$ then $H^0(X)=\k$.
\end{enumerate}
 \end{exe}
%%%%%%%%%%%%%%%%%%%%%%%%%%%%%%%%%%%%%%%%%%%%%%%%%%%%%%%%%%555
\subsection{An incomplete elliptic curve, I}
\label{affine}
In this section we find an explicit basis for the de Rham cohomology of the main examples of this text, 
that is, affine elliptic curves in Weierstrass form. The general theory uses the notion of a Brieskorn module 
which is essentially the same as de Rham cohomology. Our main source for this section is \cite{ho06-1, ked08}. 

Let $t_1,t_2,t_3\in\ring$, $P(x)=4(x-t_1)^3-t_2(x-t_1)-t_3\in \ring[x]$ and $f=y^2-P(x)$. Define
$$
A=\ring[x,y]/\langle f\rangle. 
$$
We have
$$
\Omega_{X/T}^1=\Omega^1_{\A^2_\ring}/\langle f \Omega^1_{\A^2_\ring}+\Omega^0_{\A^2_\ring}df \rangle,\ 
\Omega_{X/T}^2=\Omega^2_{\A^2_\ring}/\langle f \Omega^2_{\A^2_\ring}+df\wedge \Omega^1_{\A^2_\ring} \rangle.
$$
We have to say some words about $\Omega_{X/T}^2$; We define the auxiliary $\ring$-module:
$$
V=\Omega^2_{\A^2_\ring}/df\wedge \Omega^1_{\A^2_\ring} \cong \ring[x,y]/\langle f_x,f_y \rangle,
$$
and
$$
\Delta:=27t_3^2-t_2^3.
$$
\begin{prop}
We have 
$$
 \Delta\Omega_{X/T}^2=0
$$
%$$
%\{r\in \ring\mid  r\Omega_{X/T}^2=0\}=\langle \Delta \rangle,\
%$$

\end{prop}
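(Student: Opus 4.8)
The plan is to reinterpret the statement as a single ideal-membership and then to reduce it to the classical fact that the discriminant of a polynomial lies in the ideal generated by the polynomial and its derivative. First I would make $\Omega_{X/T}^2$ completely explicit. Since $\Omega^2_{\A^2_\ring}$ is free of rank one over $\ring[x,y]$ on the generator $dx\wedge dy$, I identify it with $\ring[x,y]$. Writing $df=f_x\,dx+f_y\,dy$ one has $df\wedge dy=f_x\,dx\wedge dy$ and $df\wedge dx=-f_y\,dx\wedge dy$, so under this identification the submodule $df\wedge\Omega^1_{\A^2_\ring}$ becomes the ideal $\langle f_x,f_y\rangle$ (this is precisely the isomorphism $V\cong\ring[x,y]/\langle f_x,f_y\rangle$ recorded just above the statement), while $f\,\Omega^2_{\A^2_\ring}$ becomes $\langle f\rangle$. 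Hence
$$
\Omega_{X/T}^2\cong \ring[x,y]/\langle f,f_x,f_y\rangle
$$
as $A$-modules, a cyclic module generated by the class of $dx\wedge dy$. Consequently $\Delta\,\Omega_{X/T}^2=0$ is equivalent to the single membership $\Delta\in\langle f,f_x,f_y\rangle$ in $\ring[x,y]$.

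Next I would eliminate the variable $y$. Here $f_x=-P'(x)$ and $f_y=2y$; since $\k$ has characteristic zero, $2$ is invertible in $\ring$, so $y^2=\frac{1}{2}\,y\,f_y$, and therefore $P=y^2-f\in\langle f,f_y\rangle$ while $P'=-f_x\in\langle f_x\rangle$. Thus it suffices to prove the one-variable statement $\Delta\in\langle P,P'\rangle$ inside $\ring[x]$: any combination $\Delta=c\,P'+e\,P$ with $c,e\in\ring[x]$ is turned into a combination of $f,f_x,f_y$ by substituting $P'=-f_x$ and $P=\frac{1}{2}y f_y-f$, giving $\Delta=-c\,f_x-e\,f+\frac{1}{2}e\,y\,f_y$.

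Finally I would exhibit $\Delta$ as an explicit $\ring[x]$-combination of $P$ and $P'$. This is the classical fact that, up to a nonzero constant, the discriminant equals the resultant $\mathrm{Res}(P,P')$, which always lies in $\langle P,P'\rangle$; for our cubic I would simply run one pass of the Euclidean algorithm. Setting $u=x-t_1$, so that $P=4u^3-t_2u-t_3$ and $P'=12u^2-t_2$, the combination $3P-uP'=-2t_2u-3t_3$ is linear in $u$, and eliminating $u$ between this linear form and $P'$ lands on the constant $27t_3^2-t_2^3=\Delta$. Tracking the coefficients yields
$$
\Delta=t_2^2\,P'+3(2t_2u-3t_3)\,(3P-uP'),
$$
which I would confirm by a direct expansion. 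This completes the reduction.

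I do not expect any genuine obstacle: the whole argument is a finite computation. The only points demanding care are making the submodule/ideal identification $\Omega_{X/T}^2\cong \ring[x,y]/\langle f,f_x,f_y\rangle$ precise, and invoking characteristic zero (invertibility of $2$, used in $y^2=\frac{1}{2}yf_y$). The one genuinely error-prone step is the explicit B\'ezout identity displayed above, which should be verified by expanding both sides; one may also clear the factor $\frac{1}{2}$ by first proving $2\Delta\in\langle f,f_x,f_y\rangle$ with coefficients in $\ring$ and then dividing by the invertible $2$.
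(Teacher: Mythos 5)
Your proof is correct, but it takes a genuinely different route from the paper's. The paper keeps the auxiliary module $V=\Omega^2_{\A^2_\ring}/df\wedge\Omega^1_{\A^2_\ring}\cong\ring[x,y]/\langle f_x,f_y\rangle$, observes that $V$ is free of rank two over $\ring$ with basis $dx\wedge dy,\ x\,dx\wedge dy$ (here invertibility of $2,3$ enters), writes the matrix $M$ of multiplication by $f$ in this basis, and applies the determinant trick via the characteristic polynomial (Cayley--Hamilton): since $\Omega^2_{X/T}=V/fV$, the element $\det(M)$ annihilates $\Omega^2_{X/T}$, and an explicit computation gives $\det(M)=-\frac{1}{27}\Delta$ with $27$ invertible. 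You instead reduce the statement to the single ideal membership $\Delta\in\langle f,f_x,f_y\rangle$ in $\ring[x,y]$, eliminate $y$ using $f_x=-P'$, $f_y=2y$, $P=\tfrac12 yf_y-f$, and finish with a one-variable B\'ezout identity; your displayed identity is correct, since with $u=x-t_1$ one has $3P-uP'=-(2t_2u+3t_3)$, hence $3(2t_2u-3t_3)(3P-uP')=27t_3^2-12t_2^2u^2$, and adding $t_2^2P'=12t_2^2u^2-t_2^3$ gives exactly $\Delta=27t_3^2-t_2^3$. Your route is more elementary and fully constructive: it bypasses both the freeness of $V$ and the matrix computation, and it actually produces the polynomials $A,B$ with $AP+BP'=\Delta$ that the paper needs anyway a few lines after the proposition in order to define $\omega=\frac{1}{\Delta}(Ay\,dx+2B\,dy)$ --- so your argument unifies two steps of the text. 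What the paper's determinant trick buys in exchange is robustness and generality: it shows that for any $f$ for which the analogous $V$ is a finite $\ring$-module, multiplication by $f$ has a characteristic polynomial whose constant term kills $\Omega^{\rm top}$, with no need to exhibit explicit B\'ezout coefficients (which may be unavailable or unwieldy in the higher-dimensional families, e.g.\ Calabi--Yau, that the introduction has in view), and as a byproduct it identifies the exact normalization $\det(M)=-\frac{1}{27}\Delta$ relating $f$ to the discriminant. Both arguments use characteristic zero only through the invertibility of small integers, as you correctly flag.
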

\begin{proof}
%and the $\ring/\langle \Delta\rangle$-module $\Omega_{X/T}^2$ is freely generated by $dx\wedge dy, xdx\wedge dy$.
Using the explicit form of $f$, we can easily verify that the $\ring$-module $V$ is freely generated by 
$dx\wedge dy, xdx\wedge dy$ (here we use the fact that $2$ and $3$ are invertible in $\ring$).  Let $M:V\to V,\ A(\omega)=f\omega$. We write $A$ in the basis $dx\wedge dy, xdx\wedge dy$:
$$
M=\begin{pmatrix}
\frac{2}{3}t_1t_2+t_3 & 
-\frac{2}{3}t_1^2t_2+\frac{1}{18}t_2^2 \\
\frac{2}{3}t_2 &
-\frac{2}{3}t_1t_2+t_3
\end{pmatrix}.
$$
Let $p(z):=z^2-{\mathrm tr}(M)z+\det(M)=\det(M-zI_{2\times 2})$ be the characteristic polynomial of $M$. 
We have $P(f)V=0$ and 
since $\Omega_{X/T}^2=V/\langle f\rangle$, we conclude that $\det(M)\Omega_{X/T}^2=0$.
From another side $\det(A)=\frac{-1}{27}\Delta$.
\end{proof}
By the above proposition $\Delta\omega, \omega\in \Omega^1_{X/T}$  is closed.  
From now on we assume that $\Delta$ is irreducible in $\ring$ and we replace $\ring$ with its localization
on its multiplicative group generated by $\Delta$. Therefore, $\Delta$ is invertible 
in $\ring$ and we can talk about the  pole or zero order along $\Delta$ of an element in any $\ring$-module.
In this way 
$\Omega^2_{X/T}=0$ and 
$$
H_{\dR}^1(X/T)\cong\Omega^1_{\A^2_\ring}/\langle f \Omega^1_{\A^2_\ring}+df\Omega^0_{\A^2_\ring}+ d\Omega^0_{\A^2_\ring}\rangle. 
$$
There are two polynomials  $A,B\in \ring[x]$ such that $AP+BP'=\Delta$. We define
$$
\omega=\frac{1}{\Delta}(Aydx+2Bdy)
$$
which satisfies:
\begin{equation}
 \label{dxdy}
dx=y\omega, dy=\frac{1}{2}P'\omega
\end{equation}
We denote by $\frac{dx}{y}$ and $\frac{xdx}{y}$ the elements $\omega$, respectively $x\omega$. Note that these two elements have poles of order at most one along $\Delta$. 
\begin{prop}
\label{redbull}
The $\ring$-module $H_{\dR}^1(X/T)$ is freely generated by the elements $\frac{dx}{y}$ and $\frac{xdx}{y}$.
\end{prop}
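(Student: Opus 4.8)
The plan is to turn the computation of $H^1_{\dR}(X/T)$ into a purely polynomial problem and then settle it by a triangularity (degree) argument. After the localization that makes $\Delta$ invertible we have $\Omega^2_{X/T}=0$, so $H^1_{\dR}(X/T)=\Omega^1_{X/T}/dA$. By the relations $(\ref{dxdy})$ the module $\Omega^1_{X/T}$ is generated over $A$ by the single element $\omega=\frac{dx}{y}$, and since $\Delta$ is now a unit the curve $X/T$ is smooth of relative dimension one, so $\Omega^1_{X/T}$ is an invertible $A$-module with a generating section and is therefore free of rank one, $\Omega^1_{X/T}=A\omega$. Writing an arbitrary function as $h=a(x)+b(x)y$ with $a,b\in\ring[x]$ — possible because $y^2=P(x)$ forces $A=\ring[x]\oplus\ring[x]y$ — and substituting $(\ref{dxdy})$, I would compute
$$
dh=\Bigl(b'P+\tfrac12 bP'\Bigr)\omega+a'\,y\omega .
$$

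First I would exploit the $y$-component. Since $\k$ has characteristic zero every polynomial is a derivative, so $a'$ runs over all of $\ring[x]$ as $a$ does; hence $dA$ already contains every element $a'y\omega$, and modulo $dA$ each class is represented by some $c(x)\omega$ with $c\in\ring[x]$. Keeping track of the quotient carefully, this identifies $H^1_{\dR}(X/T)$, as an $\ring$-module, with $\ring[x]/W$ via $c\leftrightarrow c\omega$, where
$$
W=\{\,b'P+\tfrac12 bP'\ :\ b\in\ring[x]\,\}.
$$
Because $b\mapsto b'P+\tfrac12 bP'$ is $\ring$-linear, $W$ is the $\ring$-span of the polynomials $Q_j:=jx^{j-1}P+\tfrac12 x^jP'$, $j\ge 0$, which are exactly the images $d(x^jy)$ read off through $(\ref{dxdy})$.

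The key point is that each $Q_j$ has degree $j+2$ with leading coefficient $2(2j+3)$, a unit in $\ring$ (again using characteristic zero). Thus $\{Q_j\}_{j\ge 0}$ is a triangular family realizing every degree $\ge 2$. For generation, repeatedly subtracting the appropriate $\ring$-multiple of $Q_{n-2}$ from a representative of degree $n\ge 2$ lowers its degree, so $1$ and $x$ generate $\ring[x]/W$, i.e. $\frac{dx}{y}$ and $\frac{xdx}{y}$ generate $H^1_{\dR}(X/T)$. For freeness, I would suppose $r_0+r_1x\in W$, say $r_0+r_1x=\sum_j s_jQ_j$ with $s_j\in\ring$; if some $s_j\ne 0$, taking the largest such index $J$ produces a right-hand side of degree $J+2\ge 2$ with leading coefficient $s_J\cdot 2(2J+3)\ne 0$ (as $\ring$ is a domain and the second factor is a unit), contradicting $\deg(r_0+r_1x)\le 1$. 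Hence all $s_j=0$ and then $r_0=r_1=0$, so $\{1,x\}$ is an $\ring$-basis of $\ring[x]/W$ and $\frac{dx}{y},\frac{xdx}{y}$ freely generate $H^1_{\dR}(X/T)$.

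The hard part is the freeness half: ruling out hidden $\ring$-relations among $\frac{dx}{y}$ and $\frac{xdx}{y}$. The reduction above pins this down to the statement that the ``second kind'' reductions $Q_j$ cannot combine to a nonzero polynomial of degree $\le 1$, which the unit leading coefficients dispatch cleanly. Two auxiliary points deserve care, and are precisely where characteristic zero is genuinely used: that $\Omega^1_{X/T}$ is free of rank one (so the coefficients in $c\omega+d\,y\omega$ are well defined and the identification with $\ring[x]/W$ is legitimate, allowing me to equate $x$- and $y$-components), and that the $y$-components $a'$ exhaust $\ring[x]$. As an independent check one could instead base change along $\ring\hookrightarrow\K$ to the fraction field and invoke the known fact that the affine elliptic curve, a once-punctured genus-one curve, has two-dimensional first de Rham cohomology; $\K$-independence of the images of $\frac{dx}{y},\frac{xdx}{y}$ would then force the desired $\ring$-independence.
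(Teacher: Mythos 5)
Your proof is correct, and it actually establishes more than the paper's printed argument does. On the generation half you and the paper proceed identically: every class is represented by $(C+yD)\omega$, the $y$-part is exact because $Dy\omega=D\,dx$ admits an antiderivative (characteristic zero, so $\Q\subseteq\ring$ and the needed denominators are units), and the degree of $C$ is lowered by subtracting multiples of $d(x^ay)=(\frac{1}{2}P'x^a+ax^{a-1}P)\omega$ --- your $Q_j$ are exactly these elements, with the same unit leading coefficient $2(2j+3)$. Where you genuinely go further is freeness: the paper's proof stops after the degree reduction and never rules out $\ring$-relations between $\frac{dx}{y}$ and $\frac{xdx}{y}$, even though the proposition asserts a free basis. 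Your triangularity argument closes this gap cleanly --- any relation $r_0+r_1x\in W$ with some $s_J\neq 0$ would produce a leading term $s_J\cdot 2(2J+3)x^{J+2}$ of degree at least $2$, impossible since $\ring$ is a domain and $2(2J+3)$ is a unit. You are also right to flag, and to justify, the point the paper takes for granted: that $\Omega^1_{X/T}=\ring[x]\omega\oplus\ring[x]y\omega$, so that the identification $H^1_{\dR}(X/T)\cong\ring[x]/W$ and the separation into $x$- and $y$-components are legitimate; your smoothness argument works, since $AP+BP'=\Delta$ with $\Delta$ invertible shows the singular locus is empty, whence $\Omega^1_{X/T}$ is invertible and the generating section $\omega$ makes it free of rank one. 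Your alternative check for freeness --- base change along the flat map $\ring\hookrightarrow\K$ and the fact that a once-punctured genus-one curve over a field has two-dimensional first de Rham cohomology --- is also valid, since localization commutes with the de Rham complex of an affine variety, though it trades your elementary leading-coefficient computation for a known dimension count.
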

\begin{proof}
Using the equalities (\ref{dxdy}) and $y^2=P(x)$, every element of  $H_{\dR}^1(X/T)$ can be written in the form $(C+yD)\omega,\ C,D\in\ring[x]$. Since $Dy\omega=Ddx$ is exact, this reduces to $C\omega$.
From another side the elements
$$
d(x^ay)=(\frac{1}{2}P'x^a+ax^{a-1}P)\omega
$$
are cohomologus to zero. If $\deg(C)\geq 2$, we can choose a monomial $F=x^a$ in such a way that the leading coefficient of $(\frac{1}{2}P'F+F'P)$ is equal to the leading coefficient of $C$. We subtract $d(Fy)$ from $C\omega$ and we get smaller degree for $C$. We repeat this until getting a degree one $C$.  
\end{proof}
\begin{exe}\rm
\label{maraomid}
Verify the following equalities in $H_{\dR}^1(X/T)$:
$$
\frac{x^2dx}{y}=(2t_1)\frac{xdx}{y}+(-t_1^2+\frac{1}{12}t_2)\frac{dx}{y}
$$
$$
 \frac{x^3dx}{y}=(3t_1^2+\frac{3}{20}t_2)\frac{xdx}{y}
  +(-2t_1^3+\frac{1}{10}t_1t_2+\frac{1}{10}t_3)\frac{dx}{y}
$$
$$
\frac{x^4dx}{y}=(4t_1^3+\frac{3}{5}t_1t_2+\frac{1}{7}t_3) \frac{xdx}{y}+ (-3t_1^4-\frac{1}{10}t_1^2t_2+\frac{9}{35}t_1t_3+\frac{5}{336}t_2^2)\frac{dx}{y}
$$
$$
 \frac{x^5dx}{y}=(5t_1^4+\frac{3}{2}t_1^2t_2+\frac{5}{7}t_1t_3+\frac{7}{240}t_2^2)\frac{xdx}{y}+
(-4t_1^5-\frac{2}{3}t_1^3t_2+\frac{2}{7}t_1^2t_3+\frac{19}{420}t_1t_2^2+\frac{1}{30}t_2t_3)\frac{dx}{y}
$$
\end{exe}
%$$
% \frac{x^6dx}{y}=(6t_1^5+3t_1^3t_2+15/7t_1^2t_3+7/40t_1t_2^2+87/1540t_2t_3)\frac{xdx}{y}$$ $$
%   +(-5t_1^6-7/4t_1^4t_2-1/7t_1^3t_3+27/560t_1^2t_2^2+221/1540t_1t_2t_3+15/4928t_2^3+1/55t_3^2)\frac{dx}{y}
%$$
%$$
% \frac{x^6dx}{y}=(7t_1^6+21/4t_1^4t_2+5t_1^3t_3+49/80t_1^2t_2^2+87/220t_1t_2t_3+77/12480t_2^3+5/182t_3^2)\frac{xdx}{y}
%  $$
%  $$ +(-6t_1^7-7/2t_1^5t_2-3/2t_1^4t_3-11/120t_1^3t_2^2+67/220t_1^2t_2t_3+1039/68640t_1t_2^3+999/10010t_1t_3^2+433/43680t_2^2t_3)\frac{dx}{y} 
%$$

%\begin{verbatim}
%     LIB "foliation.lib";
%     ring r=(0,t_1,t_2,t_3), (x,y),wp(2,3);
%     poly p=4*(x-t_1)^3-t_2*(x-t_1)-t_3;
%     poly f=y^2-p;
%     int i;
%     mulmat(f,f);
%     
%     for (i=0;i<=7;i=i+1){linear(f, x^i);}
% \end{verbatim}   
%%%%%%%%%%%%%%%%%%%%%%%%%%%%%%%%%%%%%%%%%%%%%%%%%%%%%%%%%%%%%%%%%%%%%%%%%%%
\subsection{An incomplete elliptic curve, II}
\label{intaffine}
Let  $P(x)\in\ring[x]$ be as in the previous section and
$$
A=\ring[x,y,z]/\langle y^2-P(x), yz-1\rangle 
$$
We will simply write $\frac{1}{y}$ instead of $z$.  
%Note that as before $ \Delta\Omega_{X/T}^2=0$.
\begin{prop}
\label{8may2010}
The $\ring$ module $H_{\dR}^1(X/T)$ is freely generated by
$$
\frac{dx}{y}, \frac{xdx}{y}, \frac{dx}{y^2}, \frac{xdx}{y^2}, \frac{x^2dx}{y^2}, 
$$
\end{prop}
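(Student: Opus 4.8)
The plan is to exploit the invertibility of $y$ in $A$ to split the computation into two independent pieces according to the parity of the power of $y$. First I would observe that, since $y^2=P(x)$ and $yz=1$, the ring $A$ is the localization of $\ring[x,y]/\langle y^2-P\rangle$ at $y$, and that as a module it decomposes as $A=\ring[x,\frac{1}{P}]\oplus\ring[x,\frac{1}{P}]\,y$ (using that $P=y^2$ is invertible). The Jacobian criterion applied to $(y^2-P,\,yz-1)$ shows $X\to T$ is smooth of relative dimension one, so $\Omega^1_{X/T}$ is locally free of rank one; the relations $2y\,dy=P'dx$ and $z\,dy+y\,dz=0$ express $dy,dz$ as $A$-multiples of $dx$, hence $\Omega^1_{X/T}=A\,dx$ is free of rank one, generated equally well by $\omega=\frac{dx}{y}=\frac{y}{P}\,dx$. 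Consequently $\Omega^2_{X/T}=\wedge^2\Omega^1_{X/T}=0$ and $H^1_{\dR}(X/T)=\Omega^1_{X/T}/dA$. The key structural point is that $d$ preserves the $\Z/2$-grading by $y$-parity: one has $d\ring[x,\frac{1}{P}]\subset\ring[x,\frac{1}{P}]\,dx$, while $d(h\,y)=(h'P+\frac{1}{2}hP')\omega$ lies in $\ring[x,\frac{1}{P}]\,y\,dx$ because $\omega=\frac{y}{P}\,dx$. Therefore $H^1_{\dR}(X/T)$ splits as an even part (multiples of $dx$, containing $\frac{x^idx}{y^2}=\frac{x^i}{P}dx$) and an odd part (multiples of $\omega$, containing $\frac{x^idx}{y}=x^i\omega$), and each can be treated separately.

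For the odd part I would reduce an arbitrary $G\,\omega$ with $G\in\ring[x,\frac{1}{P}]$ to the $\ring$-span of $\omega$ and $x\omega$. The exact forms $d(x^a y^{2b+1})=\bigl(a\,x^{a-1}P^{b+1}+\tfrac{2b+1}{2}x^aP^bP'\bigr)\omega$, now available for every $b\in\Z$ because $y^{-1}\in A$, let me lower the pole order along $P$: writing $\tilde AP+\tilde BP'=1$ (possible since $\Delta$, hence $\gcd(P,P')$, is invertible) and feeding the resulting $P'$-terms into these relations drops the pole order by one at each step, down to pole order zero. At that point $G\,\omega$ has become a polynomial times $\omega$, and Proposition \ref{redbull} reduces such a form to an $\ring$-combination of $\frac{dx}{y}$ and $\frac{xdx}{y}$. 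For the even part, $\ring[x,\frac{1}{P}]\,dx$ modulo $d\ring[x,\frac{1}{P}]$ is the de Rham cohomology of the affine line with the roots of $P$ removed; the same two moves—use $\tilde AP+\tilde BP'=1$ to trade $\frac{x^aP'}{P^k}$ for lower pole order via $d\bigl(\frac{x^a}{P^{k-1}}\bigr)$, then reduce the numerator modulo $P$ since $\deg P=3$ and every polynomial is exact in characteristic zero—show this piece is generated by $\frac{dx}{y^2},\frac{xdx}{y^2},\frac{x^2dx}{y^2}$. Together this yields the five stated generators.

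The part I expect to be the real work is $\ring$-freeness, since over a ring one cannot simply count dimensions. Here the grading pays off again: a relation among the five classes splits into an even and an odd relation, so it suffices to show that if $\sum_{j=0}^{1}b_j x^j\omega$ or $\sum_{i=0}^{2}a_i\frac{x^i}{P}dx$ is exact then all coefficients vanish. In the odd case exactness means $b_0+b_1x=H'P+\frac{1}{2}HP'$ for some $H\in\ring[x,\frac{1}{P}]$; writing $H=N/P^m$ and using $\gcd(P,P')=1$ one shows inductively that any pole of $H$ along $P$ must cancel (here $\frac{1}{2}-m\neq0$ in characteristic zero), so $H$ is a polynomial, and then a leading-term count (the leading coefficient of $H'P+\frac{1}{2}HP'$ equals $(4\deg H+6)$ times that of $H$, never zero) forces $H=0$ and $b_0=b_1=0$. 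In the even case exactness gives $M'P-mMP'=(a_0+a_1x+a_2x^2)P^m$ for $g_1=M/P^m$, and the same $\gcd(P,P')=1$ descent reduces to $m=0$, where $\deg(M'P)\ge 3>2$ forces $M'=0$ and hence $a_0=a_1=a_2=0$. This uniqueness of the normal form is exactly the freeness of $H^1_{\dR}(X/T)$ on the five generators. The main obstacle throughout is the careful bookkeeping of pole orders along $P$, all of which is controlled by the separability hypothesis $\Delta\in\ring^*$ and by working in characteristic zero.
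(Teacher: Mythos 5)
Your proof is correct, and for the generation half it is at bottom the same mechanism as the paper's: the paper reduces a general class $Cy^{-k}dx+Dy^{-k+1}dx$ by repeatedly multiplying by $\Delta=BP'+Ay^2$ (i.e.\ using $AP+BP'=\Delta$ with $\Delta$ invertible) and absorbing the $P'$-terms into the exact forms $d(x^ay^{-b})$, then divides the $y^{-2}$-numerator by $P$ and handles the $y^{-1}$-part by the method of Proposition \ref{redbull} --- exactly your two moves, just applied to both parities at once. What you do differently is impose the $\Z/2$-grading $A=\ring[x,\tfrac{1}{P}]\oplus\ring[x,\tfrac{1}{P}]y$ \emph{before} reducing, which buys two things: it identifies the even piece with the de Rham cohomology of the affine line with the roots of $P$ removed (making the count of three generators there conceptually transparent), and it splits any putative relation into independent even and odd relations. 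That splitting feeds directly into the genuinely new content of your write-up: the paper's proof, here as in Proposition \ref{redbull}, only establishes that the five classes generate and is silent on freeness, whereas you prove it, via the pole-order descent (using $\gcd(P,P')=1$ and $\tfrac{1}{2}-m\neq 0$, resp.\ $m\neq 0$, in characteristic zero) followed by the leading-coefficient count $(4\deg H+6)\neq 0$ in the odd case and the degree bound $\deg(M'P)\geq 3>2$ in the even case; your explicit observation that $\Omega^1_{X/T}=A\,dx$ is free of rank one, so that $\Omega^2_{X/T}=0$ and $H_{\dR}^1(X/T)=\Omega^1_{X/T}/dA$, is likewise left tacit in the paper. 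One small point of hygiene: since $\ring$ is only assumed to be a finitely generated $\k$-algebra and integral domain, the bookkeeping with ``$P\nmid N$'' and minimal $m$ is cleanest after base change to the fraction field $\k_1$ of $\ring$; this is harmless, because a relation over $\ring$ maps to one over $\k_1$ and $\ring\hookrightarrow\k_1$, so independence over $\k_1$ gives freeness over $\ring$.
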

\begin{proof}
In this example $dy=\frac{1}{2y}P'dx$ and so every element  $\omega$ of $H_{\dR}^1(X/T)$ can be written in the form 
$Cy^{-k}dx+ Cy^{-k+1},\ C,D\in \ring[x],\ k\geq 1$. We use the equality
$$
d(x^ay^{-b})=ax^{a-1}y^{-b}dx+\frac{-b}{2}x^ay^{-b-2}P'dx
$$
for $b=-1,-2,\ldots$ and see that $\omega$ is reduced to a form with $k=1$ 
(each time we multiply $\omega$ with $\Delta= BP'+Ay^2$). Now, for terms $Cy^{-2}dx$  we make the division of $C$  by $P$ and 
we are thus left with the generators $\frac{dx}{y^2}, \frac{xdx}{y^2}, \frac{x^2dx}{y^2}$. For terms $Dy^{-1}dx$ we proceed as 
in Proposition \ref{redbull} and we are left with the generators $\frac{dx}{y}, \frac{xdx}{y}$.
\end{proof}
 
%%%%%%%%%%%%%%%%%%%%%%%%%%%%%%%%%%%%%%%%%%%%%%%%
\subsection{De Rham cohomology of projective varieties}
\label{16may2010}
Let $X$ be a projective reduced variety over $\ring$. 
We have the complex of sheaves of differential forms $(\Omega^\bullet_X,d)$ and we define the $i$-th de Rham cohomology of $X$ as the $i$-th hypercohomology of the complex $(\Omega^\bullet_{X/T},d)$, that is
$$
H_{\dR}^i(X)=\uhp^i(\Omega^\bullet_{X},d).
$$
It is far from the objectives of this text to define the hypercohomology. Instead, we explain how its elements look like and how to calculate it.

Let ${\cal U}=\{ U_i\}_{i\in I}$ be any  open covering of $X$ by affine subsets, where $I$ is a totally ordered finite set. 
We have the following double  complex
\begin{equation}
\label{geble}
\begin{array}{ccccccc}
\vdots   & & \vdots  & & \vdots  \\
\uparrow & &\uparrow & & \uparrow\\ 
\Omega^0_2&\rightarrow
 &\Omega^1_2&\rightarrow & \Omega^2_2& \rightarrow & \cdots \\
\uparrow & &\uparrow & & \uparrow & & \\
\Omega^0_1&\rightarrow
 &\Omega^1_1&\rightarrow & \Omega^2_1& \rightarrow & \cdots \\
\uparrow & &\uparrow & & \uparrow & &\\
\Omega^0_0&\rightarrow
 &\Omega^1_0&\rightarrow & \Omega^2_0& \rightarrow &\cdots
\end{array}
\end{equation}
Here $\Omega^i_j$ is the product over $I_1\subset I,\
\#I_1=j+1$ of the set of global sections $\omega_{\sigma}$
of $\Omega^i_{X/T}$ in the open set $\sigma=\cap_{i\in I_1} U_i$. %The index set $I$ is assumed to be ordered.  
The horizontal arrows are usual differential operator $d$ of
$\Omega^i_{X}$'s and vertical arrows are differential operators $\delta$
in the sense of Cech cohomology, that is,
\begin{equation}
\label{delta}
\delta:\Omega_{j}^i\to \Omega^{i}_{j+1},\ \ 
\{\omega_\sigma\}_{\sigma}\mapsto \{\sum_{k=0}^{j+1}(-1)^k\omega_{\tilde \sigma_k}\mid_{\tilde \sigma}\}_{\tilde \sigma}.
\end{equation}
Here $\tilde \sigma_k$ is obtained from $\tilde\sigma$, neglecting the $k$-th open set in the definition of $\tilde \sigma$. 
The $k$-th piece of the total chain
of (\ref{geble}) is
$$
{\cal L}^k:=\oplus_{i=0}^{k}\Omega^i_{k-i}
$$
with the  differential operator 
\begin{equation}
\label{piorenemigo}
d'=d+(-1)^k\delta:{\cal L}^k\rightarrow
{\cal L}^{k+1}.
\end{equation}
The hypercohomology $\uhp^k(M,\Omega^\bullet)$ is  the total cohomology of
the double complex (\ref{geble}), that is
$$
\uhp^k(M,\Omega^\bullet)=\frac{ \ker( {\cal L}^k\stackrel{d}{\rightarrow}
{\cal L}^{k+1}) }{ \Im( {\cal L}^{k-1}\stackrel{d}{\rightarrow}
{\cal L}^{k}) }.
%H^m({\cal L}^\bullet, d').
$$
\begin{exe}\rm
\begin{enumerate}
 \item 
Show that the above definition does not depend on the choice of covering, that is, 
if ${\cal U}_1$ and ${\cal U}_2$ are two open covering of $X$ then the corresponding hypercohomologies are isomorphic in a canonical 
way.
\item 
For which varieties $X$, we have $H^0_\dR(X)=\ring$.  
\end{enumerate}
\end{exe}

%%%%%%%%%%%%%%%%%%%%%%%%%%%%%%%%%%%%%
 \subsection{Complete elliptic curve, I}
\label{6may10}
Let us consider the projective variety 
$$
E={\rm Proj}(\ring[x,y,z]/\langle zy^2-4(x-t_1z)^3+t_2(x-t_1z)z^2+t_3z^3\rangle )
$$
which is covered by two open sets 
$$
U_0=\spec( \ring[x,y]/\langle y^2-4(x-t_1)^3+t_2(x-t_1)+t_3\rangle), 
$$
$$
U_1= \spec (\ring[x,z]/\langle z-4(x-t_1z)^3+t_2(x-t_1z)z^2+t_3z^3\rangle).
$$
Note that $U_0$  and $U_0\cap U_1$ are the affine varieties in \S \ref{affine}, respectively \S\ref{intaffine}. For simplicity, 
we will drop $/\ring$ and $/T$ from our notations. The variety $E$ has a closed point 
$O:=[0;1;0]$ which is in the 
affine chart
$U_1$. It is sometimes called the point at infinity. 
By definition, we have %$H^0_\dR(X)=\ring$ and
$$
H^1_\dR(E)=\{(\omega_0,\omega_1)\in \Omega^1_{U_0}\times  \Omega^1_{U_1}\mid \omega_1-\omega_0\in d(\Omega^0_{U_0\cap U_1})\}/
d\Omega^0_{U_0}\times d\Omega^0_{U_1}
$$
$$
H^2_\dR(E)=\Omega^2_{U_0\cap U_1}/(\Omega^2_{U_0}+\Omega^2_{U_1}+d\Omega^1_{U_0\cap U_1})
$$
In the definition of $H^1_\dR(E)$ as above, we sometimes take $U_1$ smaller but always 
containing $O$. 
%$\omega_1$ can be any meromorphic differential form which is regular at the 
%closed point $\infty$. One has to redefine $U_1$ to be the prevoius one mines all the pole loci of $\omega_1$. 

%%%%%%%%%%%%%%%%%%%%%%%%%%%%%%%%%%%%%%%%%%%%5555
\subsection{Residue calculus}
We need to carry out some residue calculus near the closed smooth point $O$, see for 
instance \cite{tat68}. Such a machinery is usually developed for curves over a field and so 
it seems to be necessary to consider the elliptic curve $E$ over the fractional field $\k_1$ of $\ring$, that is, we  use
$E\otimes_\ring \k_1$ instead of $E$. However, most of our calculations lead to elements in $\ring$ which will be used later in 
the theory of quasi-modular forms.

A regular function $t$ in a neighborhood of $O=[0;1;0]$ is called a coordinate system at $O$ if 
$t(O)=0$ and $t$  generates the one dimensional $\k_1$-space $m_O/m_{O}^2$,  where $m_O$ 
is the ring of regular functions  in a neighborhood of $O$ such that they vanish at $O$, and 
$m_{O}^2$ is the $\O_{X,O}$-module generated by  $ab,\ a,b\in m_O$. Recall that $O$ 
is a smooth  point of $E$. Any meromorphic function $f$ (meromorphic 1-form $\omega$) near $O$ has an 
expansion in $t$:
\begin{equation}
 \label{xeremhava}
f=\sum_{i=-a}^\infty f_i t^i,\ \hbox{resp.} \ \ \omega=(\sum_{i=-a}^\infty f_i t^i )dt,\  
f_i\in\k_1, 
\end{equation}
where $a$ is some integer. The stalk of the ring of meromorphic 
differential $1$-forms at $O$ is a  $\O_{X,O}$-module generated by $dt$ and so $\omega =f dt$ 
for some meromorphic function near 
$O$. Therefore, it is enough to explain the first equality.  Let $a$ be the pole order of $f$ at $O$. We work 
with $t^af$ and so without loosing the generality we can assume that $f$ regular at $O$. Let $f_0=f(O)$.  
For some $f_1\in\k_1$ the difference $=f-f_0-f_{1}t\in m_O^ 2$. We repeat this process get a sequence $f_0,f_1,f_2,\ldots,f_m,f_{m+1},\ldots \in \k_1$ such that $$
f-\sum_{i=0}^mf_it^ i\in m_O^{m+1}
$$
Another way of reformulating the above statement is: 
$$
f=\sum_{i=0}^mf_it^ i+O(t^{m+1}),
$$
where $O(t^i)$ means a sum $\sum_{j\geq i} a_j t^j$. This is what what we have written in  (\ref{xeremhava}). 

 The residue of $\omega$ at $O$ is defined to be $f_{-1}$. 
It is independent of the choice of the 
coordinate $t$. In our example, we usually take the coordinate $t=\frac{x}{y}$ with the 
notation of chart $U_0$ (in the chart 
$U_1$  we have $t=x$). The expansions of $x$ and $y$ in $t$ are of the form: 
\begin{equation}
 \label{residue}
x=\frac{1}{4}t^{-2}+O(t^0),\ y=\frac{1}{4}t^{-3}+O(t^{-1}).
\end{equation}
   \begin{exe}\rm
 \begin{enumerate}
\item
Show that $O$ is a smooth point of $E$, that is, the $\k_1$-vector space $m_{O}/m_{O}^2$ is 
one dimensional.  
%  \item 
%Verify all the details of the above discussion.
\item 
Verify the details of the residue calculus presented in this section. In particular, justify the equalities (\ref{residue}) and prove that the notion of residue does not depend on the coordinate system $t$.
\item
Calculate the residue of $\frac{x^ndx}{y^2},\ n=0,1,2,3,4,5$  at $O$.  
\item
Calculate the first $4$ coefficients of the expansion of $\frac{dx}{y}$ in the coordinate $t=\frac{x}{y}$. 
\item
Let us take the coordinates $(x,z)$ in which the elliptic curve $E$ is given by $z-4(x-t_1z)^3-t_2(x-t_1z)z^2-t_3z^3$ and we have $O=(0,0),\ t=x$. Consider $E$ over the ring $\ring$. A regular function $f$ at $O$ can be written as $\frac{P(x,z)}{Q(x,z)}$ with $Q(0,0)\not =0$. Show that if $Q(0,0)$ is invertible in $\ring$ and $P,Q\in\ring[x,z]$ then all the coefficients in the expansion of $f$ belong to $\ring$ (Hint: Verify this for $f=z$.)
\end{enumerate}
\end{exe}

%%%%%%%%%%%%%%%%%%%%%%%%%%%%%%%%%%%%%%%%%%%%%5555
\subsection{Complete elliptic curve, II}
In this section we prove the following proposition:
\begin{prop}
\label{30may10}
 The canonical restriction map 
$$
H^1_{dR}(E)\to H^1_{dR}(U_0),\ \omega_0\oplus \omega_1\to \omega_0
$$
is an isomorphism of $\ring$-modules. 
\end{prop}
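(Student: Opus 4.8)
The plan is to prove that the map is a well-defined $\ring$-linear homomorphism and then to establish surjectivity and injectivity separately, using the explicit basis of $H^1_\dR(U_0)$ from Proposition \ref{redbull} together with the residue calculus and the expansions (\ref{residue}). Recall from \S\ref{6may10} that a class in $H^1_\dR(E)$ is represented by a pair $(\omega_0,\omega_1)\in\Omega^1_{U_0}\times\Omega^1_{U_1}$ with $\omega_1-\omega_0=dg$ on $U_0\cap U_1$ for some $g\in\Omega^0_{U_0\cap U_1}$, taken modulo the coboundaries $(df_0,df_1)$ with $f_0\in\Omega^0_{U_0}$, $f_1\in\Omega^0_{U_1}$. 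Since such a coboundary restricts on $U_0$ to the exact form $df_0$, the assignment $(\omega_0,\omega_1)\mapsto[\omega_0]$ descends to a well-defined $\ring$-linear map into $H^1_\dR(U_0)=\Omega^1_{U_0}/d\Omega^0_{U_0}$. Throughout I use that $U_0\cap U_1=U_1\setminus\{O\}$, since the only point of $E$ with $z=0$ is $O$.

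For \emph{surjectivity}, by Proposition \ref{redbull} it suffices to lift the two generators $\frac{dx}{y}$ and $\frac{xdx}{y}$. The form $\frac{dx}{y}$ is of the first kind: the expansions (\ref{residue}) give $\frac{dx}{y}=(-2+O(t^2))dt$ near $O$, so it is a global regular $1$-form and the pair $(\frac{dx}{y},\frac{dx}{y})$, with $g=0$, is a class mapping to $\frac{dx}{y}$. The form $\frac{xdx}{y}$ is of the second kind: from (\ref{residue}) it has a double pole at $O$ with vanishing residue. Since $x^2/y=x^2z$ is regular on $U_0\cap U_1$ (there $y$ is invertible) and has a simple pole at $O$, a suitable scalar multiple $g=c\,x^2/y$ makes $dg$ cancel the $t^{-2}$-term of $\frac{xdx}{y}$; because the residue of $\frac{xdx}{y}+dg$ at $O$ is still zero, its $t^{-1}$-term vanishes as well, so $\omega_1:=\frac{xdx}{y}+dg$ is a meromorphic form on $E$ that is regular on $U_0\cap U_1$ and at $O$, hence lies in $\Omega^1_{U_1}$. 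Then $(\frac{xdx}{y},\omega_1)$ is a class of $H^1_\dR(E)$ mapping to $\frac{xdx}{y}$, and surjectivity follows.

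For \emph{injectivity}, suppose $(\omega_0,\omega_1)$ maps to zero, i.e. $\omega_0=df_0$ with $f_0\in\Omega^0_{U_0}$. Subtracting the coboundary $(df_0,0)$ replaces the representative by $(0,\omega_1)$. The cocycle condition now reads $\omega_1=dg$ on $U_0\cap U_1$ for some $g=f_0+g_0\in\Omega^0_{U_0\cap U_1}=\Omega^0_{U_1\setminus\{O\}}$. Since $dg=\omega_1$ is regular at $O$ and $\cha\k=0$, a nonzero principal part $g_{-a}t^{-a}$ with $a\ge 1$ would force a pole of order $a+1\ge 2$ in $dg$; hence $g$ has no pole at $O$ and $g\in\Omega^0_{U_1}$. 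As $\omega_1$ and $dg$ are regular $1$-forms on $U_1$ agreeing on the dense open $U_0\cap U_1$, they coincide on $U_1$, so $(0,\omega_1)=(d0,dg)$ is a coboundary and the class is zero.

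The hard part is the surjectivity step, precisely the lifting of the second-kind form $\frac{xdx}{y}$ over the point $O$: this works only because its residue at $O$ vanishes, so that after cancelling the double pole by an exact form the simple-pole term is automatically absent (every exact form having zero residue). The injectivity step is comparatively routine once one notes that the characteristic-zero hypothesis is what forces a function with regular differential to be itself regular at $O$.
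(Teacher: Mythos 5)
Your proof is correct and takes essentially the same route as the paper's: the same \v{C}ech description of $H^1_\dR(E)$, reduction to the two generators of Proposition \ref{redbull}, correction of $\frac{xdx}{y}$ by an exact form killing its double pole at $O$, and the same argument for injectivity (regularity of $\omega_1$ at $O$ forces regularity of $g$ there). The only differences are cosmetic: you take $g=c\,x^2/y$ (which is regular on all of $U_0\cap U_1$, so no shrinking of the covering is needed) and make the zero-residue point explicit, whereas the paper simply exhibits the lift $\left(\frac{xdx}{y},\ \frac{xdx}{y}-\frac{1}{2}d\left(\frac{y}{x}\right)\right)$ on a slightly shrunken chart and checks regularity at $O$ directly from the expansions (\ref{residue}).
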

\begin{proof}
 First we check that it is injective. Let us take an element 
$(\omega_0,\omega_1)\in H_\dR^1(X)$  with $\omega_0=0$. By definition $\omega_1=\omega_1-\omega_0=df,\ 
f\in \Omega^0_{U_0\cap U_1}$. Since $\omega_1$ has not poles  on the closed point $O\in X$, $f$ has not too, 
which implies that $(\omega_0,\omega_1)$ is cohomologous to zero.

Now, we prove the surjectivity. The restriction map is $\ring$-linear and so by 
Proposition \ref{redbull}, it is enough to prove that 
$\frac{dx}{y},\ \frac{xdx}{y}$ are in the image of the restriction map. In fact, the corresponding elements 
in $H_\dR^1(E)$ are respectively
$$
(\frac{dx}{y}, \frac{dx}{y}) , \ \ (\frac{xdx}{y}, \frac{xdx}{y}-\frac{1}{2}d(\frac{y}{x})). 
$$
We prove this affirmation for $\frac{xdx}{y}$. We define $\tilde U_1=U_1\backslash \{x=0\} $ and use the definition of hypercohomology 
with  the covering $\{U_0,\tilde U_1\}$.   We compute $x$ and $y$ in terms of the local coordinate 
$t=\frac{y}{x}$ around the point at infinity $O$ and we  have (\ref{residue}). 
 Substituting this in $\frac{xdx}{y}$, we get the desired result. 
\end{proof}
Let $U_0,U_1$ be an arbitrary covering of $E$. We have a well-defined  map
$$
{\rm Tr}: H^2_\dR(X)\to \ring,\ 
\rm Tr(\omega)=\text{ sum of the residues of $\omega_{01}$ around the points $X\backslash U_0$},  
$$
where $\omega$ is represented by  $\omega_{01}\in\Omega^1_{U_0\cap U_1}$. As usual, we take 
the canonical charts of $X$ described in \S \ref{6may10}. 
The map $\rm Tr$ turns out to be an isomorphism of $\ring$-modules. 
\begin{prop}
The $\ring$-module $H^2_\dR(X)$ is of rank one.
\end{prop}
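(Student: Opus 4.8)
The plan is to prove the stronger statement already announced, that the trace map ${\rm Tr}\colon H^2_\dR(E)\to\ring$ is an isomorphism; since $\ring$ is free of rank one over itself this yields the proposition. I work throughout with the cover $\{U_0,U_1\}$. Taking the total complex of the double complex (\ref{geble}) for this two-element cover, and using that $\Omega^2=0$ on the curves $U_i$ while triple intersections are empty, one finds that a class of $H^2_\dR(E)$ is represented by a single form $\omega_{01}\in\Omega^1_{U_0\cap U_1}$ modulo the relations coming from $d$ and $\delta$, i.e.
$$
H^2_\dR(E)=\Omega^1_{U_0\cap U_1}/(\Omega^1_{U_0}+\Omega^1_{U_1}+d\Omega^0_{U_0\cap U_1}),
$$
and, since $X\setminus U_0=\{O\}$, the trace is just ${\rm Tr}(\omega_{01})=\Res_O(\omega_{01})$. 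The first move is to quotient by $d\Omega^0_{U_0\cap U_1}$ alone: by Proposition \ref{8may2010} every $\omega_{01}$ becomes, modulo exact forms, an $\ring$-linear combination of the five generators $\frac{dx}{y},\ \frac{xdx}{y},\ \frac{dx}{y^2},\ \frac{xdx}{y^2},\ \frac{x^2dx}{y^2}$ of $H^1_\dR(U_0\cap U_1)$. Thus $H^2_\dR(E)$ is a quotient of this rank-five module by the images of $\Omega^1_{U_0}$ and $\Omega^1_{U_1}$, and it suffices to decide which generators survive.

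Next I would show that four of the five generators die. In the notation of \S\ref{affine} the forms $\frac{dx}{y}$ and $\frac{xdx}{y}$ are exactly $\omega$ and $x\omega$, which are regular on the affine chart $U_0$; hence they lie in $\Omega^1_{U_0}$ and vanish in $H^2_\dR(E)$. For the other two, note that on $U_1=\{y\neq0\}$ one has $P(x)=y^2\neq0$, so $\frac{dx}{y^2}=\frac{dx}{P(x)}$ and $\frac{xdx}{y^2}=\frac{xdx}{P(x)}$ are regular away from $O$; substituting the expansions (\ref{residue}) at $O$ shows their local expansions in $t=\frac{x}{y}$ begin at order $t^{3}$, respectively $t$, so they extend regularly across $O$. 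Hence both lie in $\Omega^1_{U_1}$ and also vanish. Consequently $H^2_\dR(E)$ is generated by the single class $[\frac{x^2dx}{y^2}]$, which already gives rank at most one.

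Finally I would compute $\Res_O\frac{x^2dx}{y^2}$. Substituting (\ref{residue}) gives the local expansion $-\tfrac12 t^{-1}dt+O(t)\,dt$ (the subleading corrections contribute only at order $t$ and higher, hence not to the $t^{-1}$-coefficient), so $\Res_O\frac{x^2dx}{y^2}=-\tfrac12$. To see this class is not torsion, suppose $n[\frac{x^2dx}{y^2}]=0$ for some $n\in\ring$, i.e. $\frac{nx^2dx}{y^2}\in\Omega^1_{U_0}+\Omega^1_{U_1}+d\Omega^0_{U_0\cap U_1}$, and apply $\Res_O$: exact forms and forms in $\Omega^1_{U_1}$ (regular at $O$) contribute $0$, while a form regular on $U_0=E\setminus\{O\}$ has its only possible pole at $O$ and therefore residue $0$ there by the residue theorem (equivalently, by the $y\mapsto -y$ symmetry, whose effect $t\mapsto -t$ forces the $t^{-1}$-coefficient to vanish). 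Hence $-\tfrac n2=0$, so $n=0$ as $2$ is invertible in characteristic zero. Therefore $H^2_\dR(E)$ is free of rank one, and ${\rm Tr}$ is the isomorphism with inverse $1\mapsto -2[\frac{x^2dx}{y^2}]$.

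The step I expect to cost the most care is the pole bookkeeping: verifying scheme-theoretically, and not merely analytically, that $\frac{dx}{y^2}$ and $\frac{xdx}{y^2}$ extend to regular forms on all of $U_1$ (one argues that a form on $U_0\cap U_1$ regular at $O$ extends across the single missing point $O$ of the smooth curve $U_1$), and that the reduction of Proposition \ref{8may2010}, carried out modulo $d\Omega^0_{U_0\cap U_1}$, remains legitimate inside the larger quotient defining $H^2_\dR(E)$. Once these are in place, the well-definedness of ${\rm Tr}$ — that $\Res_O$ annihilates each of $\Omega^1_{U_0}$, $\Omega^1_{U_1}$ and $d\Omega^0_{U_0\cap U_1}$ — is subsumed by the residue computations above.
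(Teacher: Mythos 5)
Your proof is correct and follows essentially the same route as the paper's: reduce $\Omega^1_{U_0\cap U_1}$ modulo exact forms to the five generators of Proposition \ref{8may2010}, observe that the first two lie in $\Omega^1_{U_0}$ and the next two extend to $\Omega^1_{U_1}$, and compute $\Res_O\frac{x^2dx}{y^2}=-\frac{1}{2}$ via (\ref{residue}); your only addition is to make explicit the freeness (non-torsion) verification that the paper leaves implicit in the assertion that ${\rm Tr}$ is an isomorphism. One caveat: your parenthetical ``equivalently, by the $y\mapsto -y$ symmetry'' is false --- under $t\mapsto -t$ one has $t^{-1}dt\mapsto (-t)^{-1}d(-t)=t^{-1}dt$, so invariance does \emph{not} force the $t^{-1}$-coefficient to vanish, and indeed $\frac{x^2dx}{y^2}$ is itself invariant under the involution yet has residue $-\frac{1}{2}$; fortunately the residue-theorem argument it was meant to replace is correct and suffices.
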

\begin{proof}
 According to Proposition \ref{8may2010} any element in $\Omega^1_{U_0\cap U_1}$ modulo exact forms can be reduced to 
an $\ring$-linear combination of $5$ elements. All these elements are zero in $H^2_\dR(X)$, except the last one 
$\frac{x^2dx}{y^2}$. The first two elements are regular forms in $U_0$ and the next two forms are regular in $U_1$.
We have proved that   any element $\omega\in H^2_\dR(X)$ is reduced to $r\in \frac{x^2dx}{y^2}, \ r\in\ring$. 
Since $\frac{x^2dx}{y^2}$ at $O$ has the residue $\frac{-1}{2}$ (use the local coordinate $t=\frac{x}{y}$ and the 
equalities (\ref{residue})), we get the desired result.
\end{proof}
\begin{exe}\rm
\begin{enumerate}
 \item 
 Let us take two open sets $U_1,\tilde U_1\subset E$ which contain $O$. Show that the definition of de Rham cohomologies of 
$E$ attached to the coverings $\{U_0,U_1\}$ and $\{U_0,\tilde U_1\}$ are canonically isomorphic. 
\item
By our definition of residue, it takes values in $\k_1$, the fractional field of $E$. Show that the map
$\rm Tr$ has values in $\ring$. 
\end{enumerate}
\end{exe}

%%%%%%%%%%%%%%%%%%%%%%%%%%%%%%%%%%%%%%%%%%%%%%5
\subsection{The intersection form}
\label{intersectionform}
Let $X$ be a smooth irreducible reduced projective variety over $\ring$. % with connected geometric fibers. 
One can define the cup product
\begin{equation}
 \label{cup}
H^i_\dR(X)\times H^j_\dR(X) \to H^{i+j}_\dR(X)
\end{equation}
which is the translation of the usual wedge product for the de Rham cohomologies of real manifolds.  Further, we can define an isomorphism  
\begin{equation}
\label{trace}
{\rm Tr}: H^{2n}_\dR(X)\cong\ring
\end{equation}
of $\ring$-modules which imply that $H^{2n}_\dR(X)$ is free of rank one. For $i=j=\dim(X)$, the map (\ref{cup}) composed with 
(\ref{trace}) gives us a bilinear maps
$$
\langle \cdot,\cdot\rangle: H^n_\dR(X)\times H^n_\dR(X) \to \ring.
$$
We have already defined {\rm Tr} in the case of elliptic curves.  
In this section we are going to define the cup product in  the case of a curve.

Let us take two elements $\omega,\alpha\in H_\dR^{1}(X)$. We take an arbitrary covering $X=\cup_{i}U_i$ of $X$ and we assume 
that $\omega$ and $\alpha$ are given by 
$\{\omega_{ij}\}_{i,j\in I}$%, \{f_{ij}\}_{i,j\in I})$ 
and 
$\{\alpha_{ij}\}_{i,j\in I}$%, \{g_{ij}\}_{i,j\in I})$ 
with
$$
\omega_{j}-\omega_i=df_{ij},\ \alpha_j-\alpha_i=dg_{ij}.
$$
We define 
$$
\gamma:=\omega\cup \alpha\in H^{2}_\dR(X)
$$
which is given by: 
\begin{equation}
 \label{19may2010}
\gamma_{ij}=g_{ij}\omega_j-f_{ij}\alpha_j+f_{ij}dg_{ij}
\end{equation}
Let us consider  the situation of \S \ref{6may10}. In this case 
$$
\frac{dx}{y}\cup \frac{xdx}{y}=\{\omega_{01}\},\ \omega_{01}=\frac{-1}{2}\frac{dx}{x},
$$
and
\begin{equation}
\label{08.05.2010}
\langle \frac{dx}{y}, \frac{xdx}{y}\rangle = 1.
\end{equation}
\begin{exe}\rm
\begin{enumerate}
 \item 
Show that the definition of $\omega\cup \alpha$ does not depend on the covering of the curve $X$ and that 
$\cup$ is non-degenerate. 
\item
For a curve over complex numbers show its algebraic de Rham cohomology, cup product and intersection form are essentially the same objects defined by $C^\infty$-functions.
 
\end{enumerate}

\end{exe}

%%%%%%%%%%%%%%%%%%%%%%%%%%%%%%%%%%%%%%%%%%%%%%%%%55
\section{Gauss-Manin connection}
\subsection{Introduction}
\label{int02}
In 1958 Yu. Manin solved the Mordell conjecture over function fields and A. Grothendieck after reading his article invented
the name Gauss-Manin connection. I did not find any simple  exposition of  this subject, the one which could be understandable
by Gauss's mathematics. I hope that the following explains the presence of the name of Gauss on this notion. Our story again goes back to integrals.
Many times an integral depend on some parameter and so the resulting integration is a function in that parameter. For instance take the elliptic integral (\ref{boaa}) and 
assume that $P$ and $Q$ depends on the 
parameter 
$t$ and the interval $\delta$ does not depend on $t$. In any course in calculus we learn that the integration and derivation with respect to $t$ 
commute:
$$
\frac{\partial}{\partial t}\int_{\delta} \frac{Q(x)}{\sqrt{P(x)}}dx=\int_{\delta} \frac{\partial}{\partial t}(\frac{Q(x)}{\sqrt{P(x)}})dx.
$$ 
As before we know that the right hand side of the above equality can be written as a linear combination of two integrals $\int_{\delta}\omega$ and $\int_{\delta}x\omega$. This is 
the historical origin of the notion of Gauss-Manin connection, that is, derivation of integrals with respect to parameters and 
simplifying the result in terms of integrals which cannot be simplified more. 
For instance, take $P(x)$ as in (\ref{8.11.2010}) which depends on three parameters $t_1,t_2,t_3$. We have 
\begin{equation}
\label{sadbis}
\frac{\partial}{\partial t_i}I=I A_i,\ \ i=1,2,3,  \ \hbox{ where } 
I:=[\int_{\delta}\frac{dx}{y}\int_\delta\frac{xdx}{y}],
\end{equation}
and $A_i$ is a $2\times 2$ matrix whose coefficients can be calculated effectively.
When the integrand depends on many parameters the best way to put the information of derivations with respect to all parameters in one object is by using differential forms (recall that differential forms are also used to represent the integrand). 
We define
$$
A=A_1dt_1+A_2dt_2+A_3dt_3
$$
that is, $A_{ij}:=(A_1)_{ij}dt_1+(A_2)_{ij}dt_2+(A_3)_{ij}dt_3,\ i,j=1,2$.  Now, we write (\ref{sadbis}) in the form $dI=I\cdot A$.  In this section we calculate the Gauss-Manin connection, that is, we calculate the matrix $A$, see Proposition \ref{18.1.06}.

%%%%%%%%%%%%%%%%%%%%%%%%%%%%%%%%%%%%%%%%%%%%%%%%%%%%%%%
\subsection{Gauss-Manin connection}
What we do in this section in the framework of Algebraic Geometry is as follows: 
Let $X$ be a smooth reduced variety over $\ring$. We construct a connection
$$
\nabla: H_\dR^i(X)\to \Omega^1_T\otimes_{\ring} H_\dR^i(X)
$$
where $\Omega_T^1$ is by definition $\Omega_{\ring/\k}^1$, that is, the $\ring$-module of differential attached to $\ring$. 
By definition of
a connection, $\nabla$ is $\k$-linear and satisfies the Leibniz rule
$$
\nabla(r\omega)=dr\otimes \omega+r\nabla\omega.
$$
A vector field $v$ in $T$ is an $\ring$-linear map $\Omega_T^1\to \ring$. We define
$$
\nabla_v:  H_\dR^i(X)\to H_\dR^i(X) 
$$
to be $\nabla$ composed with 
$$
v\otimes {\rm Id}: \Omega^1_T\otimes_{\ring} H_\dR^i(X)\to
\ring\otimes_{\ring} H_\dR^i(X)=H_\dR^i(X).
$$
 If $\ring$ is a polynomial ring 
$\Q[t_1,t_2,\ldots]$ then we have vector fields $\frac{\partial }{\partial t_i}$ which are 
defined by
the rule 
$$
\frac{\partial }{\partial t_i}(dt_j)=1 \text{ if $i=j$ and $=0$ if $i\not=j$}.
$$ 
In this case we simply 
write $\frac{\partial }{\partial t_i}$ instead of $\nabla_\frac{\partial }{\partial t_i}$.

Sometimes it is useful to choose   a basis $\omega_1,\omega_2,\ldots,\omega_h$ of the $\ring$-modular $H^i(X/T)$ and 
write the Gauss-Manin connection in this basis:
\begin{equation}
\label{18oct2010}
\nabla\begin{pmatrix}\omega_1 \\ \omega_2 \\ \vdots \\ \omega_h\end{pmatrix}=A\otimes \begin{pmatrix}\omega_1 \\ \omega_2 \\ \vdots \\ \omega_h\end{pmatrix}
\end{equation}
where $A$ is a $h\times h$ matrix with entries in $\Omega^1_T$.

%%%%%%%%%%%%%%%%%%%%%%%%%%%%%%%%%%%%%%%%%%%%%%%%%%%%%%%%%%%%%%%55555
\subsection{Construction}
Recall the notation of \S \ref{16may2010}. . 
Let us take a covering ${\cal U}=\{U_i\}_{i\in I}$ of $X$ by affine open sets and $\omega\in H^k_\dR(X)$. 
By our definition $\omega$ is represented  by $\oplus_{i=0}^k\omega_i,\ \omega_i\in \Omega^{i}_{k-i}$ and $\omega_{i}$ is  
a collection of $i$-forms $\{\omega_{i,\sigma}\}_\sigma$. By definition we have $d'\omega=0$, where $d'$ is given by 
(\ref{piorenemigo}). Recall that the 
differential map $d$ used in the definition of $d'$ is relative to $\ring$, that is, by definition $dr=0, r\in\ring$. 
Now, let us consider $d$  in the double complex (\ref{geble}) relative to $\k$ 
and not $\ring$.  The condition $d'\omega=0$ turns to be 
$$
d'\omega=\eta, \ \eta=\oplus_{i=0}^{k+1}\eta_i \in {\cal L}^{k+1},\ \eta_i\in 
\Omega^{i}_{k+1-i}
$$ 
and each $\eta_i, \ i\not=0$ is a collection of $i$-forms that is  a finite sum of the form  
$\sum_{j}dr_j\otimes \omega_j$. Since $d$ has no contribution in $\Omega^0_{k+1}$, we know that $\eta_0=0$. We make the tensor 
product $\otimes_{\ring}$ of the double complex (\ref{geble}) with $\Omega^1_T$ and finally  get an element in 
$\Omega^1_T\otimes_{\ring} H_\dR^k(X)$. Of course 
we have to verify that everything is well-defined.

 Let us now assume that $\k=\C$. The main motivation, which is also the historical one, for defining the Gauss-Manin connection is the following:
For any $\omega\in H_\dR^i(X)$ and a continuous family of cycles $\delta_t\in H_i(X_t,\Z)$ we have
\begin{equation}
\label{14.10.10}
d\left (\int_{\delta_t}\omega\right )=\int_{\delta_t}\nabla\omega.
\end{equation}
Here, by definition 
$$
\int_{\delta_t}\alpha\otimes \beta=\alpha\int_{\delta_t}\beta,
$$
where $\beta\in H_\dR^i(X)$ and $\alpha\in \Omega^1_T$. Integrating both side of the equality (\ref{18oct2010}) over a a 
basis $\delta_1,\delta_2,\ldots,\delta_h\in H_i(X_t,\Q)$ we conclude that
\begin{equation}
\label{18.10.10}
d([\int_{\delta_j}\omega_i])=[\int_{\delta_j}\omega_i]\cdot A.
\end{equation}

\begin{exe}\rm
 Discuss in more detail the above construction.
\end{exe}
%%%%%%%%%%%%%%%%%%%%%%%%%%%%%%%%%%%%%%%%%%%%%%%%%%%%%%%%%%%%%%%%%%%%%%%%%%%%555
\subsection{Gauss-Manin connection of families of elliptic curves}
Let us consider the ring $\ring=\Q[t_1,t_2,t_3, \frac{1}{\Delta}]$ and the family of 
elliptic curves in \S \ref{affine}. By Proposition \ref{30may10} we know that the first
de Rham cohomology of $E$ is isomorphic to the first de Rham cohomology of affine variety 
$U_0$.  Therefore, we calculate the Gauss-Manin connection attached to $U_0$. 

\begin{prop}
\label{18.1.06}
The Gauss-Manin connection of the family of elliptic curves $y^2=4(x-t_1)^3-t_2(x-t_1)-t_3$ written in the basis $\frac{dx}{y},\ \frac{xdx}{y}$ is given
as bellow:
\begin{equation}
\label{4mar}
 \nabla\begin{pmatrix}\frac{dx}{y}\\ \frac{xdx}{y}\end{pmatrix}=
A
\begin{pmatrix}\frac{dx}{y}\\ \frac{xdx}{y}\end{pmatrix}
\end{equation}
where 
$$
A= \frac{1}{\Delta}\mat
{-\frac{3}{2}t_1\alpha-\frac{1}{12}d\Delta}
{\frac{3}{2}\alpha}
{\Delta dt_1-\frac{1}{6}t_1d\Delta-(\frac{3}{2}t_1^2+\frac{1}{8}t_2)\alpha}
{\frac{3}{2}t_1\alpha+\frac{1}{12}d\Delta},\
$$
$$
 \Delta=27t_3^2-t_2^3,\ \alpha=3t_3dt_2-2t_2dt_3.
$$
\end{prop}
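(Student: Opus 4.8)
The plan is to use Proposition \ref{30may10} to replace $H^1_{\dR}(E)$ by $H^1_{\dR}(U_0)$, so that it suffices to differentiate the classes $\frac{dx}{y}$ and $\frac{xdx}{y}$ on the affine Weierstrass curve and reduce the results to the free basis of Proposition \ref{redbull}. Following the construction of $\nabla$, I would apply the total differential $d$ (relative to $\k$, not to $\ring$) to a representative $\frac{x^jdx}{y}$ and extract its horizontal part. Since $y^2=P$ with $P=P(x,t_1,t_2,t_3)$, total differentiation gives $2y\,dy=P'dx+\sum_i\partial_{t_i}P\,dt_i$. Substituting into $d\!\left(\frac{x^jdx}{y}\right)$, all terms proportional to $dx$ wedge to zero (this is just the closedness of the class as a relative form), and using $y^3=Py$ to rewrite $y^{-3}$ as $P^{-1}y^{-1}$ one is left with
\[
\nabla_{\partial_{t_i}}\frac{x^jdx}{y}=-\frac{1}{2}\,\frac{x^j\,\partial_{t_i}P}{P}\,\frac{dx}{y}
\]
modulo exact forms. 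Thus the entire computation reduces to simplifying the rational forms $\frac{x^j\partial_{t_i}P}{P}\frac{dx}{y}$ in $H^1_{\dR}(U_0)$.

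The three partials are immediate from the explicit shape of $P$: one has $\partial_{t_1}P=-P'$, $\partial_{t_2}P=-(x-t_1)$ and $\partial_{t_3}P=-1$. The $t_1$-column is then essentially free of work, since (\ref{dxdy}) together with $y^3=Py$ yields $\frac{x^jP'}{2P}\frac{dx}{y}=jx^{j-1}\frac{dx}{y}-d\!\left(\frac{x^j}{y}\right)$; reading off $j=0,1$ gives $\nabla_{\partial_{t_1}}\frac{dx}{y}=0$ and $\nabla_{\partial_{t_1}}\frac{xdx}{y}=\frac{dx}{y}$, which matches the $dt_1$-parts of the displayed matrix $A$ (note that $\alpha$ and $d\Delta$ contain no $dt_1$).

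The crux is the $t_2$- and $t_3$-columns, where $\partial_{t_i}P$ is no longer divisible by $P'$ and the forms $\frac{x^k}{P}\frac{dx}{y}=\frac{x^kdx}{y^3}$ genuinely acquire a pole along $\{y=0\}$. To clear it I would invoke a B\'ezout relation $AP+BP'=\Delta$ with $A,B\in\ring[x]$ (which exists precisely because $\Delta$ is invertible), writing $\frac{1}{P}=\frac{1}{\Delta}\!\left(A+\frac{BP'}{P}\right)$. Then $A\frac{dx}{y}$ is reduced to the basis by the algorithm of Proposition \ref{redbull}, while $\frac{BP'}{P}\frac{dx}{y}=\frac{BP'}{y^3}dx$ is reduced term by term through the exact forms $d\!\left(\frac{(x-t_1)^a}{y}\right)$, which give $\frac{(x-t_1)^aP'}{2}\frac{dx}{y^3}\equiv a(x-t_1)^{a-1}\frac{dx}{y}$. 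Collecting the two contributions, dividing by $\Delta$, and assembling the $dt_1,dt_2,dt_3$ coefficients while recognizing $\alpha=3t_3dt_2-2t_2dt_3$ and $d\Delta=54t_3dt_3-3t_2^2dt_2$ should reproduce $A$.

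I expect the only real difficulty to be computational bookkeeping: carrying out the B\'ezout reduction so that the rather delicate rational coefficients in $A$ come out exactly, and keeping signs and normalization consistent with the convention fixed in the construction of $\nabla$. A reassuring partial check is that the $dt_3$-part of the first row emerges as $\frac{1}{\Delta}\left((3t_1t_2-\frac{9}{2}t_3)\frac{dx}{y}-3t_2\frac{xdx}{y}\right)$, in exact agreement with the $dt_3$-components of $A_{11}$ and $A_{12}$.
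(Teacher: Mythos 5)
Your proposal is correct and follows essentially the same route as the paper's proof: differentiating $y^2=P$ to get $\nabla_{\partial_{t_i}}\frac{x^jdx}{y}=-\frac{1}{2}\frac{x^j\,\partial_{t_i}P}{P}\frac{dx}{y}$, then clearing the pole along $\{y=0\}$ via the B\'ezout relation and integration by parts is exactly the paper's manipulation $\Delta=-p'a_1+pa_2$ together with $\frac{p'a_1dx}{2py}\equiv\frac{a_1'dx}{y}$ modulo exact forms. The paper only carries out the single entry $\nabla_{\partial_{t_3}}\left(\frac{dx}{y}\right)$ explicitly and declares the rest a mere calculation; your partial check reproduces that entry exactly, and your observation $\partial_{t_1}P=-P'$ handling the $dt_1$-column is a clean way to organize the same bookkeeping.
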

\begin{proof}
The proof is a mere calculation which is  classical and can be  
found in (\cite{sas74} p. 304, \cite{sai01} ).  We explain only the calculation of
$\frac{\partial}{\partial t_3}(\frac{dx}{y})$ .
For $p(x)=4t_0(x-t_1)^3-t_2(x-t_1)-t_3$ we have:
$$
\Delta=-p'\cdot a_1+p\cdot a_2,
$$ 
where 
$$
a_1=
-36t_0^3x^4+144t_0^3t_1x^3+(-216t_0^3t_1^2+15t_0^2t_2)x^2+(144t_0^3t_1^3-30t_0^2t_1t_2)x-36t_0^3t_1^4+15t_0^2t_1^2t_2-t_0t_2^2
$$
$$
a_2=
(-108t_0^3)x^3+(324t_0^3t_1)x^2+(-324t_0^3t_1^2+27t_0^2t_2)x+(108t_0^3t_1^3-27t_0^2t_1t_2-27t_0^2t_3)
$$ 
We have 
\begin{eqnarray*}
\frac{\partial}{\partial t_3}(\frac{dx}{y}) &= & \frac{-dy\wedge dx}{y^2}=\frac{1}{2}\frac{dx}{py} 
\\ 
&=&  \frac{1}{\Delta}\frac{(-p'a_1+pa_2)dx}{2py}= \frac{1}{\Delta}(\frac{1}{2}a_2-a_1')\frac{dx}{y} \\
&=& (3t_0^2t_1t_2-\frac{9}{2}t_0^2t_3)\frac{dx}{y}
-3t_0^2t_2\frac{xdx}{y}.
 %\hbox{ modulo relatively exact 1-forms}
\end{eqnarray*}
%(see \cite{prso} p. 41 for a description of calculations modulo relatively exact 1-forms).
Note that in the fourth equality above we use $y^2=p(x)$ and the fact that modulo exact 
forms we have
$$
\frac{p'a_1dx}{2py}=\frac{a_1dp}{2py}=\frac{a_1dy}{p}=-a_1d(\frac{1}{y})=\frac{a_1'dx}{y}.
$$

 \end{proof}
\begin{exe}\rm
 Perform all the lacking calculations in Proposition \ref{18.1.06}.
\end{exe}
%%%%%%%%%%%%%%%%%%%%%%%%%%%%%%%%%%%%%%%%%%%%%%%%%%%%%%%%%%%%%%%%%%%%%%%
\subsection{Another family of elliptic curves}
\label{badam}
We modify a little bit the parameter space of our family of elliptic curves. Let
$$
E_\Ra : y^2-4(x-t_1)^3+t_2(x-t_1)+t_3=0,\ (t_1,t_2,t_3)\in T_{\Ra}:=\{(t_1,t_2,t_3)\in \k^3\mid 27t_3^2-t_2^3\not =0\}
$$
be our previous family of elliptic curves and 
\begin{equation}
E_{\rm H}: y^2-4(x-t_1)(x-t_2)(x-t_3)=0, \ (t_1,t_2,t_3)\in T_{\rm H}:=(t_1,t_2, t_3)\in \k^3\mid t_1\not= t_2\not =t_3\}
\end{equation}

The algebraic morphism $\alpha:T_{\rm H}\to T_{\Ra}$ defined by
$$
\alpha: (t_1,t_2,t_3)\mapsto (T, 4\sum_{1\leq i<j\leq 3}(T-t_i)(T-t_j), 4(T-t_1)(T-t_2)(T-t_3)),
$$
where
$$
\ T:=\frac{1}{3}(t_1+t_2+t_3),
$$
connects two families, that is, if in $E_{\Ra}$ we replace $t$ with $\alpha(t)$ we obtain the family $E_{\rm H}$.
The Gauss-Manin connection matrix associated to $E_{\rm H}$ is just the pull-back of the Gauss-Manin connection
associate to $E$. In this way we obtain
\begin{equation}
\label{sapongachibegambehet}
\alpha^{*}A_{\rm R}=A_{\rm H}=
  \frac{dt_1}{2(t_1-t_2)(t_1-t_3)}\mat{-t_1}{1}{t_2t_3-t_1(t_2+t_3)}{t_1}+
\end{equation}
$$
\frac{dt_2}{2(t_2-t_1)(t_2-t_3)}\mat{-t_2}{1}{t_1t_3-t_2(t_1+t_3)}{t_2}+
\frac{dt_3}{2(t_3-t_1)(t_3-t_2)}\mat{-t_3}{1}{t_1t_2-t_3(t_1+t_2)}{t_3}.
$$
\begin{exe}\rm
 Verify the equality (\ref{sapongachibegambehet}).
\end{exe}

\section{Modular differential equations}
\subsection{Introduction}
\label{int03}
After calculation of the Gauss-Manin connection of families of elliptic curves, we immediately calculate the Ramanujan and Darboux-Halphen 
differential equation (Proposition \ref{ezdevaj}).  
The history of these differential equations is full of rediscoveries. The first example of differential equations which has a particular solution given 
by theta constants was studied by Jacobi in 1848.  Later, in 1978 G. Darboux studied the system of differential equations
\begin{equation}
 \label{darboux}
\left \{ \begin{array}{l}
\dot t_1+\dot t_2=2t_1t_2\\
\dot t_2+\dot t_3=2t_2t_3\\
\dot t_1+\dot t_3=2t_1t_3
\end{array} \right.,
\end{equation}
in connection with  triply orthogonal 
surfaces in $\R^3$(see \cite{da78}). G. Halphen (1881), M. Brioschi (1881), 
 and J. Chazy (1909) contributed to the study of the differential equation (\ref{darboux}). In particular, Halphen 
expressed a solution of the system (\ref{darboux}) in terms of the logarithmic derivatives 
of the null theta functions, see \S\ref{halphensection}. Halphen generalized also (\ref{darboux}) to a differential equation with 
three parameters corresponding to the three parameters of the Gauss hypergeometric function. 
His method of calculating such differential equations is essentially described in \S\ref{perram} and it is near in spirit to 
the methods used in the present text.  
The history from number theory point of view is different. S. Ramanujan, who was a master of convergent and formal series and who did not know about the geometry of 
differential equations, in 1916 observed that the derivation of three Eisenstein series $E_2,E_4$ and $E_6$ are polynomials 
in $E_i$'s. Therefore, he had the solution and he found the corresponding differential equation. This is opposite to the work of Halphen who had the differential equation and 
he calculated a solution (the later in general is more difficult than the former). 
Halphen even calculated the Ramanujan differential equation years before Ramanujan and apparently without knowing about Eisenstein series (see \cite{hal00} page 331). 
It is remarkable to say that Darboux-Halphen differential equations was rediscovered in mathematical physics by M. Atiyah and N. Hitchin in 1985. Even
the author of the present text calculated Ramanujan and Darboux-Halphen differential equations independently without knowing about Ramanujan, Darboux and Halphen's work. All these rediscoveries  were useful because they have helped us to understand the importance and applications of such differential equations and also to find the general context of such differential equations in related with Gauss-Manin connections and in general multi dimensional linear differential systems.

%%%%%%%%%%%%%%%%%%%%%%%%%%%%%%%%%%%%%%%%%%%%%%%%%%%%%%%%%%%%%%%%%%%%%%%%%%%%%%%%%%%%%%%%%%%%%%%%%%5
\subsection{Ramanujan vector field}
\label{uff}
Our main observation in this section is the following:

\begin{prop}
\label{ezdevaj}
In the parameter space of the family of elliptic curves 
 $y^2=4(x-t_1)^3-t_2(x-t_1)-t_3$ there is a unique vector field $\Ra$, such that
\begin{equation}
\label{niteroi}
\nabla_{\Ra}(\frac{dx}{y})= -\frac{xdx}{y},\ \nabla_{\Ra}(\frac{xdx}{y})= 0.
\end{equation}
The vector field $\Ra$ is given by
\begin{equation}
\label{ramanve}
\Ra=(t_1^2-\frac{1}{12}t_2)\frac{\partial}{\partial t_1}+ 
(4t_1t_2-6t_3)\frac{\partial}{\partial t_2}+ 
(6t_1t_3-\frac{1}{3}t_2^2)\frac{\partial}{\partial t_3}.
\end{equation}
\end{prop}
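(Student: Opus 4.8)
The plan is to turn the defining conditions (\ref{niteroi}) into a linear system for the components of $\Ra$ and to solve it; the same computation yields existence and uniqueness simultaneously. Write $\Ra=a_1\frac{\partial}{\partial t_1}+a_2\frac{\partial}{\partial t_2}+a_3\frac{\partial}{\partial t_3}$ with unknowns $a_1,a_2,a_3\in\ring$. In the basis $\omega_1=\frac{dx}{y}$, $\omega_2=\frac{xdx}{y}$ the connection reads $\nabla(\omega_1,\omega_2)^{\mathrm t}=A\,(\omega_1,\omega_2)^{\mathrm t}$ as in (\ref{18oct2010}), with $A$ the matrix of $1$-forms from Proposition \ref{18.1.06}. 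Contracting the entries of $A$ with $\Ra$ replaces $\nabla_\Ra$ by multiplication by the scalar matrix $A(\Ra)=\big(A_{ij}(\Ra)\big)$, so that (\ref{niteroi}) becomes the single matrix equation
\begin{equation*}
A(\Ra)=\mat{0}{-1}{0}{0}.
\end{equation*}

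First I would record the contractions $\alpha(\Ra)=3t_3a_2-2t_2a_3$, $d\Delta(\Ra)=54t_3a_3-3t_2^2a_2$ and $dt_1(\Ra)=a_1$, and read off the four scalar equations from the explicit entries of $A$. Since $A_{11}=-A_{22}$ (the matrix $A$ is trace-free, which reflects that $\nabla$ preserves the intersection form normalised by (\ref{08.05.2010})), the two diagonal conditions coincide, leaving the three equations
\begin{equation*}
\frac{3}{2}t_1\,\alpha(\Ra)+\frac{1}{12}d\Delta(\Ra)=0,\qquad
\frac{3}{2}\alpha(\Ra)=-\Delta,\qquad
\Delta a_1-\frac{1}{6}t_1\,d\Delta(\Ra)-\left(\frac{3}{2}t_1^2+\frac{1}{8}t_2\right)\alpha(\Ra)=0.
\end{equation*}
The middle equation gives $\alpha(\Ra)=-\frac{2}{3}\Delta$; substituting this into the first gives $d\Delta(\Ra)=12t_1\Delta$; and feeding both into the third and cancelling the unit $\Delta$ yields directly $a_1=t_1^2-\frac{1}{12}t_2$, matching (\ref{ramanve}).

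It then remains to recover $a_2,a_3$ from the two scalar identities $3t_3a_2-2t_2a_3=-\frac{2}{3}\Delta$ and $-3t_2^2a_2+54t_3a_3=12t_1\Delta$. This is a $2\times2$ linear system whose coefficient determinant is $162t_3^2-6t_2^3=6\Delta$, a unit in $\ring$ after our localisation; hence $(a_2,a_3)$ is uniquely determined, and a direct substitution checks that the solution is $a_2=4t_1t_2-6t_3$, $a_3=6t_1t_3-\frac{1}{3}t_2^2$, again as in (\ref{ramanve}). The invertibility of $6\Delta$ is exactly what simultaneously gives existence and uniqueness of $\Ra$.

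There is no genuine obstacle beyond careful bookkeeping: once Proposition \ref{18.1.06} supplies $A$, the whole statement is linear in $\Ra$. The only substantive point is that the determinant governing $(a_2,a_3)$ equals $6\Delta$, so that invertibility of $\Delta$ — the one place where the localisation of $\ring$ at $\Delta$ is really used — makes the system nonsingular.
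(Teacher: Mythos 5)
Your proposal is correct and takes essentially the approach the paper intends: the paper's proof of Proposition \ref{ezdevaj} consists only of the remark that it is ``based on explicit calculations'' (with the details left as an exercise), namely contracting the Gauss--Manin connection matrix of Proposition \ref{18.1.06} with an unknown vector field, equating it to the constant matrix encoding (\ref{niteroi}), and solving the resulting linear system --- which is exactly what you carry out. All of your intermediate identities check out (the trace-free reduction, $\alpha(\Ra)=-\frac{2}{3}\Delta$, $d\Delta(\Ra)=12t_1\Delta$, $a_1=t_1^2-\frac{1}{12}t_2$, and the coefficient determinant $162t_3^2-6t_2^3=6\Delta$, whose invertibility after localising at $\Delta$ indeed yields existence and uniqueness simultaneously).
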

\begin{proof}
 The proof is based on explicit calculations.
\end{proof}
\begin{exe}\rm
 Perform the calculations leading to a proof of Proposition \ref{ezdevaj}.
\end{exe}

%%%%%%%%%%%%%%%%%%%%%%%%%%%%%%%%%%%%%%%%%%%%%%%%%%%%%%%%%%%%%%%%%%%5555
\subsection{Vector field or ordinary differential equation?}
\label{ramode}
Any vector field in $\C^n$ represent an ordinary differential equation, for which we can study the dynamics of its solutions.
For instance, the vector field $\Ra$ of the previous section can be seen as the following
ordinary differential equation:
\begin{equation}
\label{raman}
{\rm R}:
 \left \{ \begin{array}{l}
\dot t_1=t_1^2-\frac{1}{12}t_2 \\
\dot t_2=4t_1t_2-6t_3 \\
\dot t_3=6t_1t_3-\frac{1}{3}t_2^2
\end{array} \right.
\end{equation} 
where dot means derivation with respect to a variable. For a moment forget all what we have done 
to obtain $\Ra$. We write each $t_i$ as a formal power series in $q$,
$t_i=\sum_{n=0}^\infty t_{i,n}q^n,\ i=1,2,3$ and substitute in the above differential 
equation. We define the derivation to be:
$$
\dot t=aq\frac{\partial }{\partial q}
$$
for a fixed non zero number $a$. Comparing the coefficients of $q^0$ we have
$$
(t_{1,0},t_{2,0},t_{3,0})=(b,12b^2,8b^3),\hbox{ for some } b\in\C.
$$
Comparing the coefficients of $q^1$ we have 
$$
MV=aV,\ \hbox{ where },\ \
M:=\begin{pmatrix}
    2b&-\frac{1}{12}& 0\\
    48b^2&4b&-6\\
    48b^3&-8b^2&6b
   \end{pmatrix},\ 
 V:=\begin{pmatrix}
     t_{1,1}\\
t_{2,1}\\
t_{3,1}
    \end{pmatrix}.
$$
Assume that $V$ is not zero and so we have $\det(M-aI_{3\times 3})=(12b-a)a^2=0$ which implies that  $a=12b$. We also calculate $V$ up to multiplication
by a constant:
$$
V^\tr=c[-24b, 240(12b^2), -504(8b^3)],\ \hbox{ for some } c\in\C.
$$  
We realize that all the coefficients $t_{i,n},\ n>1$ are determined uniquely and recursively:
$$
(12nb I_{3\times 3}-M)
\begin{pmatrix}
t_{1,n}\\
t_{2,n}\\
t_{3,n}
    \end{pmatrix}=
\begin{pmatrix}
\sum_{i=1}^{n-1}t_{1,i}t_{1,n-i}\\
4\sum_{i=1}^{n-1}t_{1,i}t_{2,n-i}  \\
\sum_{i=1}^{n-1}6t_{1,i}t_{3,n-i}-\frac{1}{3}t_{2,i}t_{2,n-i}
    \end{pmatrix}
$$
\begin{prop}
\label{20102010}
We have $t_k=a_kE_k(cq)$, where
\begin{equation}
\label{eisenstein}
E_k(q):=\left (1+b_k\sum_{n=1}^\infty \left (\sum_{d\mid n}d^{2k-1}\right )q^{n}\right ),\ \  k=1,2,3,
\end{equation}
are Eisenstein series and 
$$
(b_1,b_2,b_3)=
(-24, 240, -504),\ \ (a_1,a_2,a_3)=(b,12 b^2 ,8b^3).
$$
\end{prop}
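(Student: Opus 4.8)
The plan is to invoke the uniqueness of the recursively determined formal solution, established in the discussion preceding the statement, and to check that the Eisenstein series furnish such a solution. Write $s_k(q):=a_kE_k(cq)$ with $(a_1,a_2,a_3)=(b,12b^2,8b^3)$ and the $E_k$ as in (\ref{eisenstein}). Since $\sum_{d\mid 1}d^{2k-1}=1$, each $E_k(cq)=1+b_kcq+O(q^2)$, so
\[
s_1=b-24bc\,q+O(q^2),\quad s_2=12b^2+240\cdot12b^2c\,q+O(q^2),\quad s_3=8b^3-504\cdot8b^3c\,q+O(q^2).
\]
These are exactly the coefficients $t_{i,0}$ and $t_{i,1}$ computed before the statement once we set $a=12b$, so the normalizations agree and it remains only to verify that the $s_k$ solve the system (\ref{raman}).

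For the key step, set $D:=q\frac{d}{dq}$ and note $q\frac{\partial}{\partial q}E_k(cq)=(DE_k)(cq)$. Substituting $s_k$ into (\ref{raman}) with $\dot{}=aq\frac{\partial}{\partial q}$ and $a=12b$, every term of a given equation carries a common power of $b$ which cancels, and the three equations collapse respectively to
\[
DE_1=\tfrac{1}{12}\bigl(E_1^2-E_2\bigr),\quad DE_2=\tfrac{1}{3}\bigl(E_1E_2-E_3\bigr),\quad DE_3=\tfrac{1}{2}\bigl(E_1E_3-E_2^2\bigr).
\]
These are precisely the classical Ramanujan relations between $E_2,E_4,E_6$ in the normalization (\ref{eisenstein}); thus proving the proposition reduces to establishing them.

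To prove the three identities I would use the Serre derivative $\vartheta_k:=D-\frac{k}{12}E_1$, which sends modular forms of weight $k$ to modular forms of weight $k+2$. For the second and third relations, $\vartheta_4E_2$ and $\vartheta_6E_3$ are modular of weights $6$ and $8$; since $\dim M_6=\dim M_8=1$ these spaces are spanned by $E_3$, respectively $E_2^2$, and comparing constant terms ($-\tfrac{1}{3}$, respectively $-\tfrac{1}{2}$) fixes the scalar and yields exactly the stated equalities. The first relation is the anomalous one, as $E_1$ (the classical $E_2$) is only quasi-modular: here one uses the transformation law of $E_1$ under $\tau\mapsto-1/\tau$ to see that $DE_1-\frac{1}{12}E_1^2$ is genuinely modular of weight $4$, hence a multiple of $E_2$ determined by its constant term $-\tfrac{1}{12}$. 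With all three relations in hand, the $s_k$ solve (\ref{raman}) with the correct leading terms, so by the uniqueness of the formal power-series solution we conclude $t_k=s_k=a_kE_k(cq)$.

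The main obstacle is the self-contained verification of the Ramanujan relations, and within it the anomalous weight-$4$ case: establishing the modularity of $DE_1-\frac{1}{12}E_1^2$ requires the quasi-modular transformation behaviour of $E_1$, which lies just outside the purely algebraic framework developed so far. The remaining ingredients---matching the two leading coefficients and reducing the vector-field equations to the Ramanujan relations---are routine algebra, and the dimension counts $\dim M_6=\dim M_8=1$ are standard.
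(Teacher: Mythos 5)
Your proposal is correct, and its skeleton---uniqueness of the recursively determined formal solution, plus verification that the Eisenstein series furnish such a solution with matching $q^0$ and $q^1$ coefficients---is exactly the logic the paper has in mind. The difference is that the paper never carries out the verification itself: immediately after the statement it remarks that Ramanujan verified that the Eisenstein series satisfy the (affinely transformed) equation (\ref{raman}) and relegates the proof to the literature (\cite{nes01}, p.~4, and \cite{maro05}; see also the accompanying exercise, which explicitly asks the reader to find a proof there). You instead make this step self-contained via the standard Serre-derivative argument: with the paper's indexing, $\vartheta_k=D-\frac{k}{12}E_1$ preserves modularity, so $\vartheta_4E_2$ and $\vartheta_6E_3$ land in the one-dimensional spaces spanned by $E_3$ and $E_2^2$, and the anomalous weight-$2$ relation follows from the quasi-modular transformation of $E_1$ together with the one-dimensionality of the space of weight-$4$ forms (a dimension count you use implicitly and should state alongside $\dim M_6=\dim M_8=1$). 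Your reduction of (\ref{raman}) to the three relations $DE_1=\frac{1}{12}(E_1^2-E_2)$, $DE_2=\frac{1}{3}(E_1E_2-E_3)$, $DE_3=\frac{1}{2}(E_1E_3-E_2^2)$ is correct---the powers of $b$ do cancel as claimed---and the matching of leading coefficients with $(b,12b^2,8b^3)$ and $c\,(-24b,\,240\cdot 12b^2,\,-504\cdot 8b^3)$ agrees with the paper's recursion, whose invertibility for $n\geq 2$ (the eigenvalues of $12nbI_{3\times 3}-M$ are $12(n-1)b,\,12nb,\,12nb$, nonzero since $b\neq 0$) is what licenses the uniqueness you invoke. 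What your route buys is a complete proof in place of a citation; what it costs, as you honestly flag, is importing the analytic transformation law of $E_1$ and the dimension formulas for low-weight modular forms, which sit outside the algebraic framework developed up to that point in the text. (The paper's later sections, \S\ref{inversesection} and \S\ref{perram}, provide a geometric substitute for the functional equations via the inverse of the period map, but the identification of the resulting $q$-expansions with the divisor-sum series still requires either the classical computation or an argument like yours.)
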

In fact what Ramanujan did was to verify that the Eisenstein series $E_k,\ k=1,2,3$ satisfy an ordinary differential equation 
which is obtained from (\ref{raman}) after an affine transformation 
$(t_1,t_2,t_3)\mapsto (\frac{1}{12}t_1,\frac{1}{12}t_2,\frac{2}{3(12)^2}t_3)$,  see \cite{nes01}, p. 4 or \cite{maro05}. 

Later, we will see that $t_{k}$'s are convergent in the unit disc. If we set $q=e^{2\pi i z}$ and look at $t_k$'s as holomorphic functions in $z$, 
which varies in the upper half plane $\{z\in\C \mid \Im(z)>0\}$, then they are basic examples of quasi-modular forms. Note that for $b=\frac{1}{12}$ and $a=1$ we have 
$2\pi i q\frac{\partial}{\partial q}=\frac{\partial}{\partial z}$ and so if we define 
\begin{equation}
 \label{17nov2010}
(g_1(z),g_2(z),g_3(z))=(\frac{2\pi i}{12} E_2, 12(\frac{2\pi i}{12})^2 E_4, 8(\frac{2\pi i}{12})^3 E_6)
\end{equation}
 then $g_k$'s satisfy the Ramanujan
differential equation (\ref{raman}), where dot means derivation with respect to $z$.

\begin{exe}\rm
\label{presidenta}
 \begin{enumerate}
  \item
For a moment forget Proposition \ref{20102010}. 
Let $t_i=\sum_{n=0}^\infty t_{i,n}q^n,\ i=1,2,3$ be formal power series in $q$ with unknown coefficients as before and assume that $t_i$ satisfy the 
differential equation \ref{raman}, where dot is $12q\frac{\partial}{\partial q}$ and we know the initial value $t_{1,2}=-24$.  
Calculate the first five coefficients $t_{k,n},\ k=1,2,3,\ n=1,2,\ldots 5$ and compare them with the coefficients in the Eisenstein series. 
%Can you prove that $t_i$'s are  the Eisenstein series. 
% Let $t_i=\sum_{n=0}^\infty t_{i,n}q^n,\ i=1,2,3$ be formal power series in $q$ with unknown coefficients as before and set $b=1$. 
% Show that if $t_{1,0}=1,\ t_{1,1}=-24$ and $t_i$'s satisfy the Ramanujan relations then all the 
% coefficients $t_{i,n}$ are determined uniquely and recursively. Calculate the first five coefficients
% and compare them with the coefficients in the Eisenstein series. Can you prove that $t_i$'s are 
% the Eisenstein series. 
\item 
Find a proof for (\ref{20102010}) in the literature (see \cite{nes01, maro05}).
 \end{enumerate}

\end{exe}

%  ring r=(0,t),(x,y),dp;
%  LIB "linalg.lib";
%  matrix A[3][3]=2*t-x,-1/12,0, 48*t^2,4*t-x,-6,48*t^3,-8*t^2,6*t-x;
%  factorize(det(A));
% A=subst(A,x,12*t);
% matrix B[3][1]=1,x,y;
% ideal I=A*B;
% 
% I=std(I);
% reduce(x,I);
% reduce(y,I);

%%%%%%%%%%%%%%%%%%%%%%%%%%%%%%%%%%%%%%%%%%%%%%%%%%%%%%%%%%%%%%%%%%55
\subsection{Halphen vector field}
\label{halphensection}
Let us consider the family of elliptic curves considered in \S\ref{badam}.
A vector field with the properties (\ref{niteroi}) is given by
$$
{\rm H}=
(t_1(t_2+t_3)-t_2t_3)\frac{\partial}{\partial t_1}+ 
(t_2(t_1+t_3)-t_1t_3)\frac{\partial}{\partial t_2}+ 
(t_3(t_1+t_2)-t_1t_2)\frac{\partial}{\partial t_3}. 
$$
The corresponding ordinary differential equation is
\begin{equation}\rm
{\rm H}:\label{halphen}
\left \{ \begin{array}{l}
\dot t_1=t_1(t_2+t_3)-t_2t_3\\ 
\dot  t_2= t_2(t_1+t_3)-t_1t_3 \\
 \dot  t_3= t_3(t_1+t_2)-t_1t_2
\end{array} \right.
\end{equation}
which is the same as (\ref{darboux}). Halphen expressed a solution of the system (\ref{halphen}) in terms of the logarithmic 
derivatives 
of the null theta functions; namely, 
\begin{prop}
\label{25002500}
 The holomorphic functions
$$
u_1=2(\ln \theta_4(0|z))',
u_2=2(\ln \theta_2(0|z))',
u_3=2(\ln \theta_3(0|z))'
$$
where
$$
\left \{ \begin{array}{l}
\theta_2(0|z):=\sum_{n=-\infty}^\infty q^{\frac{1}{2}(n+\frac{1}{2})^2}
\\
\theta_3(0|z):=\sum_{n=-\infty}^\infty q^{\frac{1}{2}n^2}
\\
\theta_4(0|z):=\sum_{n=-\infty}^\infty (-1)^nq^{\frac{1}{2}n^2}
\end{array} \right., 
\ q=e^{2\pi i z},\  z\in \uhp.
$$
satisfy the ordinary differential equation (\ref{halphen}), where 
the derivation is with respect to $z$.
\end{prop}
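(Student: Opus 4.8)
The plan is to verify the three scalar equations of (\ref{halphen}) directly from the classical theory of Jacobi's theta constants, organizing the computation so that the only genuinely second-order input is isolated in a single ``trace'' identity, while the rest reduces to elementary differentiation and linear algebra. Throughout I write $'=\frac{d}{dz}$ and abbreviate $\theta_j=\theta_j(0|z)$, so that $u_1=2\theta_4'/\theta_4,\ u_2=2\theta_2'/\theta_2,\ u_3=2\theta_3'/\theta_3$.

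\emph{Step 1 (difference relations).} First I would record the trivial consequence of the above, namely that differentiating a fourth power gives
\[
(\theta_2^4)'=2u_2\,\theta_2^4,\qquad (\theta_3^4)'=2u_3\,\theta_3^4,\qquad (\theta_4^4)'=2u_1\,\theta_4^4 .
\]
Next, using the product (or $q$-series) expansions of the theta constants together with Jacobi's identity $\theta_3^4=\theta_2^4+\theta_4^4$, one checks that there is a single constant $c$ (in fact $c=\frac{\pi i}{2}$, although its value plays no role here) with
\[
u_3-u_1=c\,\theta_2^4,\qquad u_2-u_1=c\,\theta_3^4,\qquad u_2-u_3=c\,\theta_4^4 ,
\]
these three being mutually consistent precisely because of Jacobi's identity. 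Combining the two displays, the constant $c$ cancels and I obtain the ``difference equations''
\[
(u_3-u_1)'=2u_2(u_3-u_1),\quad (u_2-u_1)'=2u_3(u_2-u_1),\quad (u_2-u_3)'=2u_1(u_2-u_3),
\]
which are exactly the pairwise differences of the Halphen system (\ref{halphen}).

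\emph{Step 2 (trace relation).} The difference equations pin down $u_1',u_2',u_3'$ only up to a common additive function, so one further relation is indispensable, namely
\[
u_1'+u_2'+u_3'=u_1u_2+u_2u_3+u_3u_1 .
\]
This is where I expect the main obstacle. Since $u_1+u_2+u_3=2\bigl(\ln(\theta_2\theta_3\theta_4)\bigr)'$, I would invoke Jacobi's derivative formula $\theta_2\theta_3\theta_4=\tfrac1\pi\,\partial_v\theta_1(v|z)\big|_{v=0}$ (equivalently $\theta_2\theta_3\theta_4=2\eta(z)^3$) and then the heat equation $\partial_v^2\theta_1=4\pi i\,\partial_z\theta_1$ to express $\bigl(\ln(\theta_2\theta_3\theta_4)\bigr)'$ and its derivative through the odd Taylor coefficients $\theta_1'''(0),\theta_1^{(5)}(0)$ of $\theta_1$ at $v=0$; these are the classical Weierstrass constants, which are polynomials in the theta constants via the expansion $-(\ln\theta_1)''=\wp-\mathrm{const}$. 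The resulting polynomial in $\theta_2^4,\theta_3^4,\theta_4^4$ must match the value of the right-hand side, which from the Step 1 differences equals algebraically $\tfrac13(u_1+u_2+u_3)^2-\tfrac16 c^2(\theta_2^8+\theta_3^8+\theta_4^8)$. Here the precise value $c=\frac{\pi i}{2}$ genuinely matters, in contrast to Step 1. Alternatively, and more in the spirit of the present text, one recognizes $u_1+u_2+u_3=\frac{\pi i}{2}E_2$ and deduces the trace relation from the Ramanujan relation for $E_2$ already available from Proposition \ref{ezdevaj}.

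\emph{Step 3 (solving the linear system).} Finally, combining the trace relation with two of the difference equations is pure linear algebra: subtracting $(u_3-u_1)'$ and $(u_2-u_1)'$ from $(u_1+u_2+u_3)'$ gives $3u_1'=3\bigl(u_1(u_2+u_3)-u_2u_3\bigr)$, and cyclically for $u_2'$ and $u_3'$. This recovers the Halphen system (\ref{halphen}) and completes the argument. The essential difficulty lies entirely in Step 2, the non-modular ``trace'' identity reflecting that $u_1+u_2+u_3$ is only quasi-modular; Steps 1 and 3 are routine once the dictionary between the $u_i$ and the theta constants is in place.
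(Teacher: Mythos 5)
The paper offers no proof of Proposition \ref{25002500} for you to be measured against: it is delegated to an exercise pointing to Halphen \cite{ha81} and Ohyama \cite{ohy95}. Your outline is, in substance, exactly the classical argument of those references, and it is correct as a strategy. I checked your constants: with $u_1=2(\ln\theta_4)'$, $u_2=2(\ln\theta_2)'$, $u_3=2(\ln\theta_3)'$ one indeed has $u_3-u_1=\frac{\pi i}{2}\theta_2^4$, $u_2-u_1=\frac{\pi i}{2}\theta_3^4$, $u_2-u_3=\frac{\pi i}{2}\theta_4^4$ (leading $q$-coefficients match, and mutual consistency is Jacobi's identity $\theta_3^4=\theta_2^4+\theta_4^4$), your Step 1 correctly reproduces the pairwise differences of (\ref{halphen}) since, e.g., $\dot t_3-\dot t_1=2t_2(t_3-t_1)$, and Step 3 is correct linear algebra: $3u_1'=(u_1+u_2+u_3)'-(u_3-u_1)'-(u_2-u_1)'=3\left(u_1(u_2+u_3)-u_2u_3\right)$.

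Three caveats, all in Step 2 and in the level of detail. First, your route (b) needs, beyond $u_1+u_2+u_3=\frac{\pi i}{2}E_2$ (which rests on Jacobi's derivative formula $\theta_2\theta_3\theta_4=2\eta^3$ together with $(\ln\eta)'=\frac{\pi i}{12}E_2$), the unstated classical identity $\theta_2^8+\theta_3^8+\theta_4^8=2E_4$ (see e.g. \cite{zag123}): writing $s=u_1+u_2+u_3$, the right-hand side is $\frac{s^2}{3}-\frac{c^2}{6}(\theta_2^8+\theta_3^8+\theta_4^8)$ as you say, and with $c=\frac{\pi i}{2}$ this equals $\frac{(\pi i)^2}{12}(E_2^2-E_4)$, which matches $s'=\frac{\pi i}{2}\cdot\frac{\pi i}{6}(E_2^2-E_4)$ from Ramanujan's first relation; without that identity the matching does not close. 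Second, the analytic statement that $E_2,E_4$ satisfy (\ref{raman}) is not Proposition \ref{ezdevaj} (which concerns only the vector field on the $t$-space) but Proposition \ref{20102010}, or the period-map derivation of \S\ref{inversesection}; note also that the paper's subsection on relations between theta and Eisenstein series derives $\frac{2}{3}(\ln(\theta_2\theta_3\theta_4))'=\frac{2\pi i}{12}E_2$ as a \emph{consequence} of Proposition \ref{25002500}, so in route (b) you must, as you indicate, obtain $u_1+u_2+u_3=\frac{\pi i}{2}E_2$ independently via Jacobi's derivative formula to avoid circularity. Third, the three logarithmic-derivative identities of Step 1 are where the genuine theta-function content sits: ``one checks from the product expansions'' compresses nontrivial Lambert-series identities (equivalently, the evaluation of $\wp$ at the half-periods), and a complete write-up would either prove them or cite them precisely, e.g.\ from \cite{ohy95}. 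With those classical inputs supplied, your proof is complete and is the expected one.
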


\begin{exe}\rm
\begin{enumerate}
 %\item 
 %Perform the calculations leading to the Gauss-Manin connection matrix 
%(\ref{sapongachibegambehet}) of the family $E'$.
%\item
%Using the map $\alpha$ describe the relations between the Eisenstein series and the theta series.
\item
Find a proof of Proposition \ref{25002500}, see \cite{ha81, ohy95}.
\end{enumerate}
\end{exe}
%%%%%%%%%%%%%%%%%%%%%%%%%%%%%%%%%%%%%%%%%%%%%%%%%%%%%%%55
\subsection{Relations between theta and Eisenstein series}
As we mentioned in \S\ref{badam}, the map $\alpha: T_{\rm H}\to T_{\Ra}$ maps the Gauss-Manin connection matrix of $E_{\rm H}$ to the Gauss-Manin connection
matrix of $E_{\Ra}$, both written in the basis $\frac{dx}{y},\ \frac{xdx}{y}$. We know  that $X=\Ra$ or ${\rm H}$ are both determined uniquely 
by $\nabla_X \frac{dx}{y}=-\frac{xdx}{y},\ \nabla_X \frac{xdx}{y}=0$. All these imply that the vector field $\rm H$ is mapped to the vector field $\Ra$
through the map $\alpha$. Let $u:\uhp\to\C^3,\ u(z)=(u_1(z),u_2(z),u_3(z))$ be the solution of $\rm H$ by the logarithmic derivative of theta functions. 
It turns out that $\alpha (u(z))$ is a solution of $\Ra$, and we claim that it is $(g_1,g_2,g_3)$ defined in (\ref{17nov2010}). 
Two solutions of (\ref{raman}) with the same $t_1$ coordinates are
equal (this follows by the explicit expression of (\ref{raman})). From this and the discussion in \S\ref{ramode} it follows that it is enough to prove that both 
$\frac{1}{3}(u_1+u_2+u_3)$ and $g_1(z)$ have the form  $\frac{2\pi i}{12}(1-24q+\cdots)$.
Finally, we get the equalities:
$$
\frac{2}{3}\ln(\theta_2(0|z)\theta_3(0|z)\theta_4(0|z))'=\frac{2\pi i}{12}E_2 (z),
$$
$$
 4\cdot 4\sum_{2\leq i<j\leq 4}
\ln(\frac{(\theta_2(0|z)\theta_3(0|z)\theta_4(0|z))^{\frac{1}{3}}}{\theta_i(0|z)})'
\ln(\frac{(\theta_2(0|z)\theta_3(0|z)\theta_4(0|z))^{\frac{1}{3}}}{\theta_j(0|z)})'=12(\frac{2\pi i}{12})^2E_4 (z),
$$
$$
4\cdot 8 \prod_{i=1}^3\ln(\frac{(\theta_2(0|z)\theta_3(0|z)\theta_4(0|z))^{\frac{1}{3}}}{\theta_i(0|z)})'
=8(\frac{2\pi i}{12})^3E_6 (z).
$$
%%%%%%%%%%%%%%%%%%%%%%%%%%%%%%%%%%%%%%%%%%%%%%%%%%%%%5
\subsection{Automorphic properties of the special solutions}
Let us define
$$
\SL 2\Z:=\{\mat{a}{b}{c}{d}\mid a,b,c,d\in\Z,\ ad-bc=1\}
$$
and\index{$\SL 2\Z$, full modular group }
$$
\Gamma(d):=\{A\in\SL 2\Z\mid A\equiv \mat{1}{0}{0}{1} {\rm mod } \ d\},\ d\in\N.
$$\index{$\Gamma(d)$, modular group}
For a holomorphic function $f(z)$ let  also  define:
$$
(f\mid_m^0A)(z):=(cz+d)^{-m}f(Az),\ (f\mid_m^1A)(z):=
(cz+d)^{-m}f(Az)-c(cz+d)^{-1},\
$$
$$ A=\mat{a}{b}{c}{d}\in \SL 2\C,\ m\in\N.
$$ 
\begin{exe}\rm
If $\phi_i,\ i=1,2,3$  are the coordinates of 
a solution of $\rm R$(resp. $\rm H$) then 
$$
\phi_1\mid_2^1A,\ \phi_2\mid_4^0A,\ \phi\mid_6^0A
$$
(resp.
$$
\phi_i\mid_2^1A,\  i=1,2,3,
$$)
are also coordinates of a solution of $\rm R$ (resp. $\rm H$) for all
$A\in\SL 2\C$. 
The subgroup
of $\SL 2\C$ which fixes the solution given by Eisenstein series
(resp. theta series) is $\SL 2\Z$ (resp. $\Gamma(2)$) (the second part of the exercise will be verified in \S\ref{ellipticsection}.
\end{exe}
% \begin{proof}
% The first part of the proposition is a mere calculation and it is
% in fact true for a general Halphen equations. The second
% part is easy and it is left to the reader.
% %For the second part note that the period map/Schwarz map  
% %$$
% %\Pn {2,3}-\{c\}\to \SL 2\Z\backslash \uhp,\ t=[t_2:t_3]\mapsto 
% %\frac{\int_{\delta_1}\frac{dx}{y}}{\int_{\delta_2}\frac{dx}{y}}.
% %$$
% %is a biholomorphism  and its inverse is given by the $j$-function
% %$j(z)=\frac{g_2^3}{g_2^3-27g_3^2}$. Here,  
% %We prove the second part of the proposition for $\rm R$ an 
% %left the other
% \end{proof}

%%%%%%%%%%%%%%%%%%%%%%%%%%%%%%%%%%%%%%%%%%%%%%%%%%%%%%%%%%%%%%%%%%%%%%%%%%%%%5
\subsection{Another example}  
Let 
$$
\eta(z):=q^{\frac{1}{24}}\prod_{n=1}^\infty(1-q^n),\ q=e^{2\pi i z}
$$
be the Dedekind's $\eta$-function. 
In \cite{oh01} Y. Ohyama has found that 
\begin{align}
W&=(3\log\eta(\frac{z}{3})-\log\eta(z))' \\
X&=(3\log\eta(3z)-\log\eta(z))' \\
Y&=(3\log\eta(\frac{z+2}{3})-\log\eta(z))' \\
Z&=(3\log\eta(\frac{z+1}{3})-\log\eta(z))' 
\end{align}
satisfy the equations:
$$
\left\{
\begin{array}{l}
t_1'+t_2'+t_3'=t_1t_2+t_2t_3+t_3t_1\\
t_1'+t_3'+t_4' =t_1t_3+t_3t_4+t_4t_1\\
t_1'+t_2'+t_4'=t_1t_2+t_2t_4+t_4t_1\\
t_2'+t_3'+t_4'=t_2t_3+t_3t_4+t_4t_2\\
\zeta_3^2(t_2t_4+t_3t_1)+\zeta_3(t_2t_1+t_3t_4)+(t_2t_3+t_4t_1)=0
\end{array}
\right.,
$$
where $\zeta_3=e^{\frac{2\pi i}{3}}$. We write the first four lines
of the above equation as a solution to a vector field $V$ in 
$\C^4$ and let $F(t_1,t_2,t_3,t_4)$ be the polynomial in the fifth line.
Using a computer, or by hand if we have a good patience for
calculations, we can verify the equality $dF(V)=0$ and so $V$ is tangent to 
$T:=\{t\in \C^4\mid F(t)=0\}$.
Our  discussion leads to:
\begin{exe}\rm
\label{6oct2010}
Show that there is a family of elliptic curves $E\to T$ and a basis 
$\omega_1,\omega_2\in H^1_\dR(E/T)$ such that $\omega_1$ is a regular differential formal and
$$
\nabla_\Ra(\omega_1)=-\omega_2,\ \ \nabla_\Ra(\omega_2)=0,
$$
where $\Ra$ is the restriction of $V$ to $T$. 
\end{exe}

%%%%%%%%%%%%%%%%%%%%%%%%%%%%%%%%%%%%%%%%%
\section{Weierstrass form of elliptic curves}
\label{weierstrasssection}
\subsection{Introduction}
\label{int04}
We can think of an elliptic curve over rational numbers as the Diophantine equation
$$
y^2=4P(x),
$$
where $P$ is a monic degree three polynomial in $x$ with rational coefficients and without double roots in $\C$. In fact, this is the Weierstrass form of any elliptic
curve in the framework of algebraic geometry. The moduli of elliptic curves is one dimensional and we have apparently three independent parameters in the polynomial $P$. It turns out
that there is an algebraic group of dimension two  which acts on the coefficients space of $P$ and the resulting quotient is the moduli of elliptic curves, see \S\ref{9nov10}. 
In this section, we remark that the coefficients space of $P$ has also a moduli interpretation.  We consider elliptic curves with elements in their
algebraic de Rham cohomologies and we ask for normal forms of such objects. It turns out that the three independent coefficients of the polynomial $P$ are the coordinates system of the moduli
of such enhanced elliptic curves.

\subsection{Elliptic curves} 

Let $\k$ be a field of characteristic zero. For a reduced smooth curve $C$ over $\k$ we define its genus to be the dimension of the space of regular differential forms on $C$.
\begin{defi}\rm
An elliptic curve over $\k$ is a pair $(E,O)$, where $E$ is a genus one complete smooth curve and 
$O$  is a $\k$-rational point 
of $E$. 
\end{defi}
Therefore, by definition an elliptic curve over $\k$ has at least a $\k$-rational point. A smooth
projective curve of degree $3$ is therefore an elliptic curve if it has a $\k$-rational point.
For instance, the Fermat curve
$$
F_3: x^3+y^3=z^3
$$
is an elliptic curve over $\Q$. It has $\Q$-rational points $[0;1;1]$ and $[1;0;1]$. However 
$$
E: 3x^3+4y^3+5z^3=0
$$ 
has not $\Q$-rational points and so it  is not an elliptic curve defined over $\Q$. It is an interesting fact to
mention that $E(\Q_p)$ for all prime $p$ and $E(\R)$ are not empty. 
This example is due to Selmer (see \cite{cas66, sel51}).

\subsection{Weierstrass form}
In this section we prove the following proposition:
\begin{prop}
\label{7.3.8}
 Let $E$ be an elliptic curve over a field $\k$ of characteristic $\not=2,3$ and let $\omega$ be a regular differential 
form on $E$. There exist unique functions $x,y\in \k(E)$ such that the map
$$
E\to \P ^2,\ a\mapsto [x(a);y(a);1]
$$
gives an isomorphism of $E$ onto a curve given by 
$$
y^2=4x^3-t_2x-t_3,\ t_2,t_3\in \k
$$
sending $O$ to $[0;1;0]$, and $\omega=\frac{dx}{y}$. 
\end{prop}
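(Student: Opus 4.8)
The plan is to produce $x$ and $y$ by the Riemann--Roch theorem and then to absorb their remaining freedom using $\omega$. Write $O$ for the marked point, assume (as the conclusion $\omega=dx/y$ requires) that $\omega$ is a \emph{nonzero} regular form, and for $n\geq1$ set $L(nO)=\{f\in\k(E)^*\mid \mathrm{div}(f)+nO\geq0\}\cup\{0\}$. Since $E$ has genus $1$ the canonical divisor has degree $0$, so Riemann--Roch gives $\dim_\k L(nO)=n$ for all $n\geq1$. I would build a ladder adapted to the pole order at $O$: $L(O)=\k$ (a single simple pole would make $E$ rational); then pick $x\in L(2O)\setminus L(O)$, which has a pole of exact order $2$ at $O$ and no other pole, and $y\in L(3O)\setminus L(2O)$, with a pole of exact order $3$. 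Because $O$ is $\k$-rational these spaces and generators are defined over $\k$.

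Next I would extract the cubic relation. In $L(6O)$, of dimension $6$, the seven functions $1,x,x^2,x^3,y,xy,y^2$ are $\k$-linearly dependent; reading off pole orders at $O$ (namely $x^j\mapsto 2j$ and $x^iy\mapsto 2i+3$) shows that only $x^3$ and $y^2$ reach the top order $6$, so both occur with nonzero coefficient, and after dividing through I get a generalized Weierstrass equation $y^2+a_1xy+a_3y=a_0x^3+a_2x^2+a_4x+a_6$ with $a_0\neq0$. Using $\mathrm{char}(\k)\neq2$ I complete the square in $y$ (replacing $y$ by $y-\frac{1}{2}(a_1x+a_3)$) to kill the $xy$ and $y$ terms, and using $\mathrm{char}(\k)\neq3$ I complete the cube in $x$ to kill the $x^2$ term; a rescaling $x\mapsto px,\ y\mapsto qy$ with $a_0p^3/q^2=4$ brings it to the depressed shape $y^2=4x^3-t_2x-t_3$ with $t_2,t_3\in\k$.

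I would then check that $a\mapsto[x(a);y(a);1]$ is an isomorphism onto the projective cubic $C$ defined by this equation, sending $O$ to $[0;1;0]$ (visible in the chart $[x/y:1:1/y]$ from the local expansions of $x,y$ at $O$). Since $x$ has degree $2$ as a map to $\P^1$ we have $[\k(E):\k(x)]=2$, and as $4x^3-t_2x-t_3$ has odd degree it is not a square in $\k(x)$, so $\k(x,y)=\k(E)$ and the morphism is birational. A singular plane cubic has geometric genus $0$, contradicting $g(E)=1$; hence $C$ is smooth, equivalently $\Delta=27t_3^2-t_2^3\neq0$, and a birational morphism between smooth projective curves is an isomorphism. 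On $C$ the form $dx/y$ is regular and nowhere vanishing, hence a basis of the one-dimensional space of regular differentials, so $dx/y=c\,\omega$ for a unique $c\in\k^*$.

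The final step, where I expect the only genuinely delicate bookkeeping, is to upgrade this to $\omega=dx/y$ and to uniqueness. The transformations preserving the depressed shape together with the point $[0;1;0]$ are exactly $x\mapsto u^2x,\ y\mapsto u^3y$ for $u\in\k^*$, under which $dx/y\mapsto u^{-1}dx/y$ while $t_2\mapsto u^{-4}t_2$ and $t_3\mapsto u^{-6}t_3$; taking $u=c$ normalizes the differential to $\omega=dx/y$. For uniqueness, two solutions induce an isomorphism of the two depressed Weierstrass curves fixing the point at infinity, which must again have the form $x\mapsto u^2x,\ y\mapsto u^3y$, and compatibility with the invariant differential forces $u^{-1}=1$, i.e. $u=1$, so the two pairs $(x,y)$ coincide. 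The main obstacle is therefore not a single estimate but organizing the normalization so that every degree of freedom---the additive shifts absorbed by completing the square and the cube, and the multiplicative scaling absorbed by $\omega$---is consumed exactly once, leaving the trivial stabilizer that yields uniqueness.
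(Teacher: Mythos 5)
Your proof is correct, and while it shares the Riemann--Roch skeleton with the paper's proof (the ladder $l(nO)=n$, the choice of $x$ with pole order $2$ and $y$ with pole order $3$, and the dependence among $1,x,x^2,x^3,y,xy,y^2$ in the six-dimensional space $L(6O)$), it diverges in how the normalization $\omega=\frac{dx}{y}$ and the uniqueness are handled. The paper hard-wires the normalization at the outset by \emph{defining} $y:=\frac{dx}{\omega}$, so that $\omega=\frac{dx}{y}$ holds by construction and the only residual freedom is $x\mapsto \lambda x+\mu$, which is then spent fixing the coefficients of $y^2$ and $x^3$ to $1$ and $4$ and killing the $x^2$ term; it also eliminates the $y$ and $xy$ terms not by completing the square but by a geometric argument with the involution $P\mapsto -P$ and the $2$-torsion points (using a residue argument to show $\sigma x=x$). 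You instead follow the standard textbook reduction: derive a generalized Weierstrass equation, complete the square (char $\neq 2$) and the cube (char $\neq 3$), rescale to the depressed form, and only afterwards absorb the constant $c$ in $\frac{dx}{y}=c\,\omega$ via the substitution $(x,y)\mapsto(u^2x,u^3y)$ with $u=c$. The paper's choice $y:=\frac{dx}{\omega}$ buys economy (no final rescaling bookkeeping), while your route buys two things: it is the familiar reduction from Silverman, and---more importantly---it makes the uniqueness claim fully explicit, via the observation that the automorphisms of the depressed form fixing $[0;1;0]$ are exactly $(x,y)\mapsto(u^2x,u^3y)$ (translations $r,s,t$ being forced to vanish in char $\neq 2,3$) and that compatibility with $\omega$ forces $u=1$. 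The paper asserts uniqueness in the statement but leaves this stabilizer computation implicit, so your last paragraph is a genuine completion rather than a redundancy; the only nitpicks are cosmetic (the direction convention in $t_2\mapsto u^{-4}t_2$, and the argument that both $x^3$ and $y^2$ must occur in the dependence is cleaner if you note that the remaining six functions have pairwise distinct pole orders at $O$ and are therefore linearly independent).
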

We call $x$ and $y$ the Weierstrass coordinates of of $E$.
\begin{proof}
For a divisor $D$ on a curve $C$ over $\bar \k$ define the linear system
$$
\L(D)=\{f\in \bar \k(C),\ f\not=0 \mid div(f)+D\geq 0\}\cup \{0\}
$$
and
$$
l(D)=\dim_{\bar \k} (\L(D)).
$$
We know by Riemann-Roch theorem that
$$
l(D)-l(K-D)=\deg(D)-g+1,
$$
where $K$ is the canonical divisor of $C$. We have $\deg(K)=2g-2$ and so for $\deg(D)>2g-2$, equivalently $\deg(K-D)<0$,
 we have
$$
l(D)=\deg(D)-g+1.
$$ 
For $g=1$ and $D=nO$ we get $l(D)=n$. Using this for $n=2,3$, we can choose $x,y\in \k(E)$ such that 
$1,x$ form a basis of $\L(2O)$ and $1,x,y$ form a basis of $\L(3O)$. N
The function $x$ (resp. $y$) has a pole of order $2$ (resp. $3$) at $O$. In fact, we need the following choice of $y$, $y:=\frac{dx}{\omega}$. 
Note that $\omega$ is regular and vanishes nowhere. The map $\sigma: E\to E,\ P\mapsto -P$ acts trivially on any $x$ in $\L(2O)$ because $\sigma x-x$ has a simple pole, and hence by residue formula for $(\sigma x-x)\omega$, has no pole and so $\sigma x=x$. 
We have $l(6O)=6$ and so there is a linear relation between $1,x,x^2,x^3,y^2,y,xy$. The last two terms $y, xy$ does not appear on such a linear relation: 
a point $p$ is a non-zero 2-torsion point of $E$ if and only if $p$ is a double root of $x-x(p)$ and if and only if $y(p)=0$. 
We can further assume that the coefficient of $y^2$ is one and of $x^3$ is $4$. After a substitution of $x$ with $x+a$ for some $a\in\k$, we get the desired polynomial 
relation between $x$ and $y$. 
%let $p_i,\ i=1,2,3$ be three $2$-torsion non-zero point of $E$. At $p_i$, $x-x(p_i)$ has a double root and so $y$ has a root 

% Now, we modify $y$ in such a way that $\frac{dx}{y}$ becomes a regular differential form on $E$. There are exactly three points $p_1,p_2,p_3\in E(\bar\k)$ and $a_1,a_2,a_3\in \bar\k$ such that $x-a_i$ has a zero of multiplicity two at $p_i$. After substitution $x\to x-\frac{\sum a_i}{3}$ we can assume that $\sum a_i=0$. 
% By the action of the Galois group $\Gal(\bar \k/\k)$ we see that $p(x)=(x-a_1)(x-a_2)(x-a_3)=x^3-t_2x-t_3\in \k[x]$. We substitute $y$ by $y-b_2x^2-b_1x-b_0, \ b_0,b_1,b_2\in \k$ such that $p_1,p_2,p_3$ are all the zeros of $y$. The elements $b_0,b_1,b_2$ are determined by the equality 
% $$
% \begin{pmatrix}
% y(p_1)\\y(p_2)\\y(p_3)
%  \end{pmatrix}=
% \begin{pmatrix}
% x^2(p_1) &x(p_1)&1\\
% x^2(p_1) &x(p_1)&1\\
% x^2(p_1) &x(p_1)&1
% \end{pmatrix}
% \begin{pmatrix}
% b_2\\b_1\\b_0
%  \end{pmatrix}
% $$
% Note that $b_i\in\k$ because it is invariant under $\Gal(\bar \k/\k)$. Since $y^2$ and $p(x)$ have equal divisors, 
% we conclude that $y^2=ap(x)$ for some $a\in \k^*$. 
% %After multiplication of $x$ and $y$ by some constants, we can 
% %assume that $x$ and $y$ in the local coordinate $t=\frac{x}{y}$ are in the form $\frac{1}{4}t^{-2}+O(t^{-1})$, respectively 
% %$\frac{1}{4}t^{-3}+O(t^{-2})$. This implies that $a=$
% After the substitution $(x,y)\to (4a^{-1}x,a^{-1}y)$ we can assume that $a=4$. 
% The map induced by $x$ and $y$ is the desired map.
\end{proof}
\begin{prop}
\label{manga2010}
Let $E$ be an elliptic curve over a field $\k$ of characteristic zero and 
$\omega\in H^1_\dR(E)\backslash F^1$. 
There exist unique functions $x,y\in \k(E)$ such that the map
$$
E\to \P ^2,\ a\mapsto [x(a);y(a);1]
$$
gives an isomorphism of $E/k$ onto a curve given by 
$$
y^2=4(x-t_1)^3-t_2(x-t_1)-t_3,\ t_1,t_2,t_3\in \k
$$
sending $O$ to $[0;1;0]$ and $\omega=\frac{xdx}{y}$. 
\end{prop}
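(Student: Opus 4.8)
The plan is to reduce the statement to the regular-form case already settled in Proposition \ref{7.3.8}, and then to transport the class $\omega$ into the position of $\frac{xdx}{y}$ by an explicit change of Weierstrass coordinates. First I would pick a generator $\omega_0$ of the one-dimensional Hodge piece $F^1$, that is, a nonzero regular differential form on $E$. Applying Proposition \ref{7.3.8} to $(E,\omega_0)$ produces Weierstrass coordinates $x',y'\in\k(E)$ giving an isomorphism of $E$ onto a curve $y'^2=4x'^3-s_2x'-s_3$ with $\omega_0=\frac{dx'}{y'}$. By Proposition \ref{redbull} the classes $\frac{dx'}{y'},\frac{x'dx'}{y'}$ form a $\k$-basis of $H^1_\dR(E)$, so I may write $\omega=\lambda\frac{dx'}{y'}+\mu\frac{x'dx'}{y'}$ in $H^1_\dR(E)$ with $\lambda,\mu\in\k$; since $F^1=\langle\frac{dx'}{y'}\rangle$ and $\omega\notin F^1$, the coefficient $\mu$ is nonzero.

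For existence I would then set $x=\mu^2x'+\lambda\mu$ and $y=\mu^3y'$. A direct substitution (using $y'^2=4x'^3-s_2x'-s_3$) shows that $(x,y)$ satisfies $y^2=4(x-t_1)^3-t_2(x-t_1)-t_3$ with $t_1=\lambda\mu$, $t_2=s_2\mu^4$, $t_3=s_3\mu^6$, that the point $O\mapsto[0;1;0]$ is preserved, and that $\frac{xdx}{y}=\mu\frac{x'dx'}{y'}+\lambda\frac{dx'}{y'}=\omega$ as differential forms, hence in $H^1_\dR(E)$. The one point worth stressing is that the required normalization uses only the integer powers $\mu^2,\mu^3$ of $\mu$, so no square or cube roots are extracted and all coefficients remain in $\k$; this is exactly what lets the construction work over an arbitrary characteristic-zero field rather than only over $\bar\k$.

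For uniqueness I would argue that any two solutions $(x,y)$ and $(\tilde x,\tilde y)$ are related by an admissible coordinate change which the normalization $\omega=\frac{xdx}{y}$ forces to be trivial. Concretely, Riemann--Roch gives $l(2O)=2$ and $l(3O)=3$, so $\{1,x\}$ and $\{1,x,y\}$ are bases of $\L(2O)$ and $\L(3O)$; hence $\tilde x=\alpha x+\beta$ and $\tilde y=\gamma y+\delta x+\epsilon$ with $\alpha,\gamma\neq0$. Requiring that $\tilde y^2$ be a polynomial in $\tilde x$ with no $y$-term kills the mixed term $2\gamma(\delta x+\epsilon)y$, forcing $\delta=\epsilon=0$, and matching the leading coefficient of the cubic gives $\gamma^2=\alpha^3$. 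Finally $\frac{\tilde x d\tilde x}{\tilde y}=\frac{\alpha^2}{\gamma}\frac{xdx}{y}+\frac{\alpha\beta}{\gamma}\frac{dx}{y}$, and comparing with $\frac{xdx}{y}$ in the basis of Proposition \ref{redbull} yields $\alpha^2=\gamma$ and $\alpha\beta=0$; together with $\gamma^2=\alpha^3$ this forces $\alpha=\gamma=1$ and $\beta=0$, i.e. $\tilde x=x,\ \tilde y=y$.

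The main obstacle, such as it is, is the uniqueness step: one has to classify all coordinate changes compatible with the Weierstrass shape and then exploit the linear independence of $\frac{dx}{y}$ and $\frac{xdx}{y}$ in $H^1_\dR(E)$ (Proposition \ref{redbull}) to eliminate them. The genuinely cohomological input is that $\omega$ is prescribed as a class, so the translation ambiguity — the $\frac{dx}{y}$-component produced by a shift $x\mapsto x+t_1$ — must be cancelled against the prescribed value of $\omega$ rather than merely discarded as an exact form. The existence half is then routine once the basis expansion of $\omega$ and the rescaling $(x',y')\mapsto(\mu^2x'+\lambda\mu,\mu^3y')$ are written down.
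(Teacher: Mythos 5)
Your proof is correct, but it normalizes differently from the paper. The paper's proof first uses the intersection form: it takes the regular differential form $\omega_1$ uniquely determined by $\langle\omega_1,\omega\rangle=1$, applies Proposition~\ref{7.3.8} to $(E,\omega_1)$, and then observes via (\ref{08.05.2010}) that in the resulting expansion $\omega=t_1\frac{dx}{y}+t_0\frac{xdx}{y}$ the coefficient $t_0$ is automatically $1$; only the translation $x\mapsto x-t_1$ remains. You instead apply Proposition~\ref{7.3.8} to an \emph{arbitrary} generator $\omega_0$ of $F^1$ and repair the coefficient $\mu$ afterwards by the admissible rescaling $(x',y')\mapsto(\mu^2x'+\lambda\mu,\,\mu^3y')$ — your remark that only the integer powers $\mu^2,\mu^3$ occur, so no roots are extracted and everything stays in $\k$, is exactly the point that makes this alternative work over a non-closed field. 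What the paper's route buys is brevity and a conceptual role for the pairing: the dual form $\omega_1$ is canonically attached to $\omega$, so uniqueness is inherited directly from the uniqueness in Proposition~\ref{7.3.8} plus the uniqueness of the completing-the-cube translation (the paper leaves this implicit). What your route buys is independence from the pairing in the existence step and, more substantially, an explicit uniqueness argument: your classification of coordinate changes via Riemann--Roch ($\tilde x=\alpha x+\beta$, $\tilde y=\gamma y+\delta x+\epsilon$, then $\delta=\epsilon=0$, $\gamma^2=\alpha^3$ from the Weierstrass shape, and $\alpha^2=\gamma$, $\alpha\beta=0$ from comparing $\frac{\tilde x d\tilde x}{\tilde y}$ with $\frac{xdx}{y}$ in the basis of Proposition~\ref{redbull}, forcing $\alpha=\gamma=1$, $\beta=0$) spells out precisely what the paper's terse proof omits, and it correctly uses that the comparison takes place between cohomology classes, where $\frac{dx}{y},\frac{xdx}{y}$ are linearly independent. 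Both arguments are sound; yours is longer but self-contained on the uniqueness half.
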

\begin{proof}
 We have a regular differential form $\omega_1$ on $E$ which is determined uniquely by 
$\langle \omega_1,\omega\rangle=1$. We apply Proposition \ref{7.3.8} for the pair $(E,\omega_1)$. In the corresponding 
Weierstrass coordinates by Proposition \ref{redbull} we can write $\omega=t_1\frac{dx}{x}+t_0\frac{xdx}{y},\ t_1,t_0\in\k$. 
Using (\ref{08.05.2010}) and 
 $\langle \omega_1,\omega\rangle=1$, we have $t_0=1$. We now substitute $x$ by $x-t_1$. 
\end{proof}
\begin{exe}\rm
 Prove that the map $E\to \P ^2$ in Proposition \ref{7.3.8} and Proposition \ref{manga2010} gives an isomorphism of $E$ onto its image. 
\end{exe}

%%%%%%%%%%%%%%%%%%%%%%%%%%%%%%%%%%%%%%555

\subsection{Group structure}
From now on will drop $O$ and simply write $E$ instead of $(E,O)$. An elliptic curve carries a structure of an Abelian group. We explain this for a 
smooth cubic in $\P ^2$.

 Let $E$ be a smooth cubic curve in $\P^2$ and $O\in E(\k)$.
 Let also $P,Q\in E(\k)$ and $L$ be the line in $\P^2$ 
connecting two points $P$ and $Q$. If $P=Q$ then $L$ is the tangent line to $E$ at $P$.
 The line $L$ is defined over $\k$ and it is easy to verify that 
the third intersection $R:=PQ$ of $E(\bar \k)$ with $L(\bar \k)$ is also in $E(\k)$. Define
$$
P+Q=O(PQ)
$$
For instance, for an elliptic curve in the Weierstrass form take $O=[0;1;0]$ the point at infinity. By definition
$O+O=O$. The above construction turns $E(\k)$ into a commutative group with the zero element $O$.

\begin{exe}\rm
\begin{enumerate}
 \item 
Let $E$ be an elliptic curve over $\k$, $a\in E(\k)$ and $f:E\to E,\ f(x)=x+a$. 
Prove that the induced map in the de 
Rham cohomology is identity. 
\item
For  $g:E\to E,\ g(x)=nx,\ n\in\N$, prove that the induced map in the de Rham cohomology is 
multiplication by $n$. In fact in the Weierstrass coordinates, and using Weierstrass $p$ 
function, we have the equality:
$$
f^*(\frac{xdx}{y})=(\frac{1}{4}(\frac{y-y_0}{x-x_0})^2-x-x_0)\frac{dx}{y},\ \ a=(x_0,y_0).
$$
This is a real equality between differential forms and not modulo exact forms.
\end{enumerate}
\end{exe}
%%%%%%%%%%%%%%%%%%%%%%%%%%%%%%%%%%%%%%%%%%5

%--------------------------------------------------------------------
\subsection{Moduli spaces of elliptic curves}
\label{modulisection}
Let $\ring=\k[t_1,t_2,t_3, \frac{1}{\Delta}], \ \Delta:=27t_3^2-t_2^3 $  and 
$T=\spec(\ring)$. Let also $E$ be the subvariety of $\P ^2\times T$ given by:
\begin{equation}
\label{khodaya}
E: zy^2-4(x-t_1)^3+t_2z^2(x-t_1)+t_3z^3=0, [x;y;z]\times (t_1,t_2,t_3)\in\P ^2\times T
\end{equation}
and 
$$
E\to T
$$
be the projection on $T$. The differential forms $\frac{dx}{y}, \frac{xdx}{y}$ 
form a free basis of the $\ring$-module $H_\dR^1(E/T)$, $\frac{dx}{y}$ is a regular differential form on $E$, 
 and $\langle \frac{dx}{y}, \frac{xdx}{y}\rangle=1$.
The discussion in \S \ref{weierstrasssection} leads to
\begin{prop}
\label{sarabishoghl}
 The affine variety $T$ is the moduli of the pairs $(\tilde E,\omega)$, where $\tilde E$ is 
an elliptic curve over $\k$ and $\omega\in H_\dR^1(\tilde E)\backslash F^1$.
\end{prop}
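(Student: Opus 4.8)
The plan is to exhibit $T$ as a moduli space by constructing two mutually inverse maps, one sending an isomorphism class of pairs $(\tilde E,\omega)$ to a point of $T$ and one sending a point of $T$ to such a pair, with the family (\ref{khodaya}) together with the marked element $\frac{xdx}{y}\in H^1_\dR(E/T)$ playing the role of the universal object. Essentially all the content is already packaged in Proposition \ref{manga2010}; the proof here is the unwinding of that statement into the language of moduli.

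First I would define the forward map. Given a pair $(\tilde E,\omega)$ with $\omega\in H^1_\dR(\tilde E)\backslash F^1$, Proposition \ref{manga2010} produces \emph{unique} Weierstrass coordinates $x,y\in\k(\tilde E)$ realizing $\tilde E$ as $y^2=4(x-t_1)^3-t_2(x-t_1)-t_3$ with $\omega=\frac{xdx}{y}$ and $O\mapsto[0;1;0]$. The coefficients $(t_1,t_2,t_3)$ are thereby determined uniquely, and since $\tilde E$ is smooth the cubic has no repeated root, which is exactly the condition $\Delta=27t_3^2-t_2^3\neq0$; hence $(t_1,t_2,t_3)\in T$. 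This map respects isomorphism: if $\phi\colon(\tilde E_1,\omega_1)\to(\tilde E_2,\omega_2)$ is an isomorphism of pairs (meaning $\phi(O_1)=O_2$ and $\phi^*\omega_2=\omega_1$), then composing the coordinates $x_2,y_2$ of $\tilde E_2$ with $\phi$ gives coordinates of the same normal form for $(\tilde E_1,\omega_1)$, so by the uniqueness clause the two pairs yield the same point of $T$.

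Next I would define the backward map, sending $(t_1,t_2,t_3)\in T$ to the fibre $E_t$ of (\ref{khodaya}) marked with $\frac{xdx}{y}$. Here $\Delta\neq0$ guarantees that $E_t$ is a smooth genus-one curve with the $\k$-rational point $O=[0;1;0]$, i.e. an elliptic curve, and Propositions \ref{redbull} and \ref{30may10} show that $\frac{dx}{y},\frac{xdx}{y}$ form a basis of $H^1_\dR(E_t)$ with $\frac{dx}{y}$ spanning $F^1$; since $\frac{xdx}{y}$ has a pole at $O$ it lies in $H^1_\dR(E_t)\backslash F^1$, so the marking is legitimate. Applying Proposition \ref{manga2010} to $(E_t,\frac{xdx}{y})$ and invoking uniqueness recovers exactly the coordinates $x,y$ and the coefficients $(t_1,t_2,t_3)$, so the composite $T\to\{\text{pairs}\}\to T$ is the identity; conversely the isomorphism furnished by Proposition \ref{manga2010} shows $(\tilde E,\omega)\cong(E_t,\frac{xdx}{y})$, so the other composite is the identity on isomorphism classes. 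The two maps are therefore mutually inverse.

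The one point requiring care — and the step I expect to be the main obstacle — is the passage from a bijection on isomorphism classes to the genuine (fine) moduli statement, which needs the rigidity of the marked pairs: any automorphism $\psi$ of $\tilde E$ fixing $O$ with $\psi^*\omega=\omega$ must be the identity. This follows again from the uniqueness in Proposition \ref{manga2010} (so that $\psi$ preserves $x$ and $y$), and in particular the inversion $P\mapsto-P$ is excluded because it acts by $-1$ on $H^1_\dR$ and so cannot fix the nonzero class $\omega$. With this rigidity in hand, the family (\ref{khodaya}) with its marked class represents the functor of families of such pairs, and no further automorphism-theoretic subtleties arise.
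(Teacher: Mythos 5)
Your proof is correct and follows exactly the route the paper intends: the paper offers no separate argument, simply deducing the proposition from the discussion of \S\ref{weierstrasssection}, i.e.\ from the uniqueness clause of Proposition \ref{manga2010}, which is precisely what you unwind into the two mutually inverse maps. Your closing rigidity remark (no nontrivial automorphism fixes $O$ and the class $\omega$, inversion being ruled out since it acts by $-1$ on $H^1_\dR$) is a correct and welcome elaboration of a point the paper leaves implicit.
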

Note that from the beginning we could work with the elliptic curve $E$ in the Weierstrass form with $t_1=0$. We have the  isomorphism
$$
(\{y^2=4(x-t_1)^3-t_2(x-t_1)-t_3\}, \frac{xdx}{y})\cong (\{y^2=4x^3-t_2x-t_3\}, \frac{xdx}{y}+ t_1 \frac{dx}{y}),
$$
$$
(x,y)\mapsto (x-t_1,y).
$$
%%%%%%%%%%%%%%%%%%%%%%%%%%%%%%%%%%%%%%%%%%%%%%%%%%%5
\subsection{Torsion points}
For an elliptic curve over $\k$ we define $E[N]$ to be the set of $N$-torsion points of $E$:
$$
E[N](\k):=\{p\in E(\k)\mid Np=0\}.
$$
When the base field is clear from the text, we drop $(\k)$ and simply write 
$E[N]=E[N](\k)$. 
\begin{prop}
\label{torsiongroup}
 Let $E$ be an elliptic curve over $\k$. We have an isomorphism of groups
$$
E[N](\bar \k)\cong (\Z/N\Z)^2
$$
and so the cardinality of $E[N](\k)$ is less than $N^2$.
\end{prop}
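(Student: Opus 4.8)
The statement has two parts, and the second follows immediately from the first: once $E[N](\bar{\k})\cong(\Z/N\Z)^2$ is known, the inclusion $E[N](\k)\subseteq E[N](\bar{\k})$ gives $\#E[N](\k)\le N^2$. So the plan is to assume $\k=\bar{\k}$ algebraically closed of characteristic zero and to determine the group structure of $E[N]:=E[N](\bar{\k})$. I would realize $E[N]$ as the kernel of the multiplication-by-$N$ isogeny $[N]\colon E\to E,\ P\mapsto NP$, a finite morphism of the complete smooth curve $E$ to itself. Since $\mathrm{char}\,\k=0$, the differential of $[N]$ at $O$ is multiplication by $N$ on the one-dimensional space $\mathrm{Lie}(E)$, hence an isomorphism; being a homomorphism of group varieties, $[N]$ is then unramified (\'etale) everywhere. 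For a finite unramified map of smooth projective curves over an algebraically closed field every fibre has exactly $\deg[N]$ points, so $\#E[N]=\#[N]^{-1}(O)=\deg[N]$.

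The crux is therefore to compute $\deg[N]=N^2$. Here I would invoke the earlier exercise showing that the endomorphism $P\mapsto NP$ induces multiplication by $N$ on $H^1_\dR(E)$, together with the intersection pairing $\langle\cdot,\cdot\rangle$ of \S\ref{intersectionform}. Because $[N]^*$ respects cup products and multiplies the trace on $H^2_\dR(E)$ by $\deg[N]$, one gets $\langle[N]^*a,[N]^*b\rangle={\rm Tr}([N]^*(a\cup b))=\deg[N]\,\langle a,b\rangle$; substituting $[N]^*=N\cdot\mathrm{id}$ yields $N^2\langle a,b\rangle=\deg[N]\,\langle a,b\rangle$, and the non-degeneracy of $\langle\cdot,\cdot\rangle$ forces $\deg[N]=N^2$. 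A safer alternative, if one prefers not to establish this projection formula in the de Rham setting, is to reduce to $\k=\C$: the curve $E$ is defined over a subfield finitely generated over $\Q$, which embeds into $\C$, and under the analytic uniformization $E(\C)\cong\C/\Lambda$ with $\Lambda\cong\Z^2$ one has $[N]^{-1}(O)=\tfrac1N\Lambda/\Lambda$, visibly of order $N^2$ and isomorphic to $(\Z/N\Z)^2$. I expect this degree computation to be the main obstacle; everything else is bookkeeping.

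Finally I would pin down the group structure purely algebraically. Applying the above to $[d]$ for every $d\mid N$ gives $\#E[d]=d^2$, and $E[N]$ is a finite abelian group annihilated by $N$. Write $E[N]\cong\bigoplus_{i=1}^r\Z/d_i\Z$ with $d_1\mid d_2\mid\cdots\mid d_r$ and each $d_i>1$; then $d_r\mid N$ and $\prod_i d_i=N^2$. For each prime $\ell\mid N$ the subgroup $E[\ell]$ has order $\ell^2$, so exactly two of the $d_i$ are divisible by $\ell$; as the $d_i$ form a divisibility chain, these must be $d_{r-1}$ and $d_r$, whence no prime dividing $N$ divides $d_{r-2}$. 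Since $d_{r-2}\mid N$, this forces $d_{r-2}=1$ and therefore $r\le 2$; as a cyclic group of order $N^2$ cannot be killed by $N$ when $N>1$, in fact $r=2$. With $d_1\mid d_2\mid N$ and $d_1d_2=N^2$ we conclude $d_1=d_2=N$, i.e. $E[N]\cong(\Z/N\Z)^2$, and the cardinality bound for $E[N](\k)$ follows as noted at the outset.
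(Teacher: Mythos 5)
Your proof is correct, but note that the paper itself offers no argument for this proposition: it simply cites Silverman \cite{si86}, so the relevant comparison is with that standard proof. Your overall skeleton (reduce to $\bar\k$, show $[N]$ is \'etale, get $\#E[N]=\deg[N]=N^2$, then pin down the group structure by counting $\ell$-torsion inside an invariant-factor decomposition) is essentially Silverman's; the genuinely different ingredient is your computation of $\deg[N]$. Silverman obtains $\deg[N]=N^2$ via the dual isogeny and the quadraticity of the degree map, whereas you use the paper's own de Rham machinery: the exercise stating that $[N]^*$ is multiplication by $N$ on $H^1_\dR(E)$, combined with compatibility of pull-back with cup product and the trace, so that $N^2\langle a,b\rangle=\langle [N]^*a,[N]^*b\rangle=\deg[N]\,\langle a,b\rangle$ and non-degeneracy of the pairing of \S\ref{intersectionform} finishes it. This is well adapted to the text and arguably more in its spirit; its one unproved input is the projection formula ${\rm Tr}([N]^*\gamma)=\deg[N]\,{\rm Tr}(\gamma)$ in algebraic de Rham cohomology, which you honestly flag and cover with the Lefschetz-principle fallback (where one should, as you implicitly do, note that the $N$-torsion is already defined over the algebraic closure of a finitely generated field of definition, so that the count over $\C$ transfers). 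Two small remarks: your \'etaleness step uses ${\rm char}\,\k=0$ essentially, which matches the paper's standing hypothesis but is narrower than Silverman's statement (valid whenever ${\rm char}\,\k\nmid N$); and in the structure argument the clause ``these must be $d_{r-1}$ and $d_{r}$'' tacitly assumes $r\ge 2$, which is harmless since the $\ell$-torsion count (or your cyclic-group observation) rules out $r=1$ for $N>1$. With those caveats the argument is complete.
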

For a proof see \cite{si86}, Theorem 6.1 page 165.

%%%%%%%%%%%%%%%%%%%%%%%%%%%%%%%%%%%%%%%%%%%%%%%%%%%%%%%%%%%%%%%%%%%%%%%%%%%%%5
\section{Quasi-modular forms}
\label{qmfsection}
\subsection{Introduction}
\label{int05}
In the present section we introduce quasi-modular forms in the framework of Algebraic Geometry.
For an elliptic curve with an element in its de Rham cohomology, which is not represented by a regular differential form,  we can associate three quantities $t_1,t_2,t_3$ which
appear in Weierstrass form of $E$, see Proposition \ref{manga2010}. These quantities satisfy a simple functional property
with respect to the action of an algebraic group which turn them our first examples of quasi-modular forms. These are algebraic version
of the Eisenstein series. In fact, we can describe the algebraic version of all Eisenstein series by using residue calculus, see  \S\ref{alleisen}.
For future applications, we introduce quasi-modular forms for the elliptic curves enhanced with certain torsion elements structure. 
 
%In \S \ref{uff} we have already seen that how from the Ramanujan differential equation one can get 
%Eisenstein series which are going to be the building blocks of the theory of quasi-modular forms.
\subsection{Enhanced elliptic curves}
Let $N$ be a positive integer. 
 An enhanced elliptic curve for $\Gamma_0(N)$ is a 3-tuple $(E, C,\omega)$, where $E$ is an elliptic curve over $\k$, $C$ is a cyclic subgroup of $E(\k)$ of order $N$ and $\omega$ is an element in $H_{\dR}^1(E)\backslash F^1$. 
 An enhanced elliptic curve for $\Gamma_1(N)$ is a $3$-tuple $(E,Q,\omega)$, where $E,\omega$ are as before and $Q$ is a point of $E(\k)$ of order $N$. Let us fix a 
primitive root of unity of order $N$ in $\k$, say $\zeta$.
An enhanced elliptic curve for $\Gamma(N)$ is a $3$-tuple 
$(E,(P,Q),\omega)$, where $E,\omega$ are as before and
 $P$ and $Q$ are  a pair of points of $E(\k)$ that generates the $N$-torsion subgroup $E[N]$ with Weil pairing $e_N(P,Q)=\zeta$. 
For the definition of Weil pairing see Chapter 3, Section 8 of Silverman \cite{si86}.

Note that the choice of $\omega\in H_{\dR}^1(E)\backslash F^1$ determines in a unique 
way an element $\omega_1\in F^1$ ($\omega_1$ is a differential form of the first kind)  with
$\langle \omega_1, \omega\rangle=1$. In this way, $\omega_1,\omega$ form a basis of the 
$\k$-vector space $H_{\dR}^1(E)$.

%Another 
%equivalent way to define an enhanced elliptic curve for $\Gamma(N)$ is as follows: 
%An enhanced elliptic curve for $\Gamma(N)$ is a $3$-tuple 
%$(E,\varphi,\omega)$, where $E,\omega$ are as before and $\varphi: (\Z/N\Z)^2\to E[N]$ is 
%a homomorphism of groups whose image generate $E[N]$.  

In a similar way we can define a family of enhanced elliptic curves 
(see \cite{har77}, Chapter III, Section 10).

%  \begin{exe}\rm
% \begin{enumerate}
% \item
% Describe the Weil pairing and prove that the two definitions given for enhanced elliptic
% curves for $\Gamma(N)$ are equivalent. 
% \item 
% 
% Calculate the Weil pairing in $E[2]$ for the elliptic curve curve $E: y^2=4(x-t_1)(x-t_2)(x-t_3)$.
% %Let $E$ be an elliptic curve over an algebraically close field $\k$. 
% %Show that there is exactly one element $P_1\not= O$ in $E[2]$ such that 
% %$w(P_1,P_2)=w(P_1,P_3)=-1$, where $E[2]=\{O, P_1,P_2,P_3\}$. 
% \end{enumerate}
% 
% 
% \end{exe}

%%%%%%%%%%%%%%%%%%%%%%%%%%%%%%%%%%%%%%%%%%%%%%%%%%%%%%%%%%%%%%%%%55
\subsection{Action of algebraic groups}
\label{9nov10}
Let $\Gamma$ be one of the $\Gamma_0(N),\Gamma_1(N)$ and $\Gamma(N)$ and $T_\Gamma$ be the set of enhanced elliptic curves for $\Gamma$ modulo canonical isomorphisms. The additive group $G_a=(\k,+)$ and the multiplicative group $G_m=(\k^*,\cdot)$  acts in a canonical way on $T_\Gamma$:
$$
(*,*,\omega)\bullet k=(*,*, k^{-1}\omega), \ k\in G_m, \ (*,*,\omega)\in T_\Gamma,
$$
$$
(*,*,\omega)\bullet k=(*,*, k'\omega_1+\omega), \ k'\in G_a, \ (*,*,\omega)\in T_\Gamma.
$$
Both these actions can be summarized in the action of the algebraic group
\begin{equation}
 \label{2nov10}
G=
\left\{\mat{k}{k'}{0}{k^{-1}}\mid \ k'\in \k, k\in \k-\{0\}\right\}\cong G_a\times G_m
\end{equation}
i.e.
$$
(*,*,\omega)\bullet g=(*,*, k'\omega_1+k^{-1}\omega), \ g\in G, \ (*,*,\omega)\in T_\Gamma.
$$

In the case $N=1$ an enhanced elliptic curve becomes a pair 
$(E,\omega),\ \omega\in H^{1}_\dR(E)\backslash F^1$. In \S \ref{modulisection} we constructed
the moduli space of such pairs:
\begin{prop}
\label{actiont}
The canonical map
$$
T(\k)\to T_\Gamma,\ t\mapsto  (E_{\pi^{-1}{t}},\frac{xdx}{y})
$$
is an isomorphism. Under this isomorphism the action of the algebraic group $G$ is given by 
$$
t\bullet g:=(
 t_1k^{-2}+k'k^{-1},
t_2k^{-4}, t_3k^{-6}), t=(t_1,t_2,t_3),
g=\mat {k}{k'}{0}{k^{-1}}\in G.
$$
 \end{prop}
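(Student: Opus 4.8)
The plan is to reduce the bijection to the two moduli results already in place and then to compute the transported $G$-action by an explicit change of Weierstrass coordinates. For the bijection, note that when $N=1$ there is no torsion datum, so an enhanced elliptic curve is merely a pair $(E,\omega)$ with $\omega\in H^1_\dR(E)\backslash F^1$, and $T_\Gamma$ is the set of such pairs up to isomorphism. By Proposition \ref{sarabishoghl} the affine variety $T$ is exactly the moduli of these pairs, the identification being Weierstrass normalization: Proposition \ref{manga2010} assigns to $t=(t_1,t_2,t_3)$ the pair $(E_t,\frac{xdx}{y})$ with $E_t\colon y^2=4(x-t_1)^3-t_2(x-t_1)-t_3$, and conversely realizes every $(E,\omega)$ uniquely in this form. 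This is precisely the displayed map, so it is a bijection.

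Next I would make the action explicit on a normalized pair. On $E_t$ the regular form $\frac{dx}{y}$ satisfies $\langle\frac{dx}{y},\frac{xdx}{y}\rangle=1$ by (\ref{08.05.2010}), so it is the form $\omega_1$ attached to $\omega=\frac{xdx}{y}$. Hence by definition of the $G$-action
$$\left(E_t,\frac{dx}{y}\cdot 0+\frac{xdx}{y}\right)\bullet g=\left(E_t,\ k'\frac{dx}{y}+k^{-1}\frac{xdx}{y}\right),\qquad g=\mat{k}{k'}{0}{k^{-1}}\in G,$$
and the task reduces to renormalizing the right-hand pair, i.e. to finding $t'$ with $(E_t,k'\frac{dx}{y}+k^{-1}\frac{xdx}{y})\cong(E_{t'},\frac{xdx}{y})$. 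Existence and uniqueness of $t'$ are guaranteed by Proposition \ref{manga2010}, since the coefficient $k^{-1}$ of $\frac{xdx}{y}$ is nonzero and the form lies outside $F^1$.

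Finally I would exhibit the normalizing substitution. With the ansatz $x=\lambda^2\bar x+\mu$, $y=\lambda^3\bar y$ one computes $\frac{dx}{y}=\lambda^{-1}\frac{d\bar x}{\bar y}$ and $\frac{xdx}{y}=\lambda\frac{\bar x d\bar x}{\bar y}+\mu\lambda^{-1}\frac{d\bar x}{\bar y}$, so that
$$k'\frac{dx}{y}+k^{-1}\frac{xdx}{y}=k^{-1}\lambda\frac{\bar x d\bar x}{\bar y}+\lambda^{-1}(k'+k^{-1}\mu)\frac{d\bar x}{\bar y}.$$
Requiring this to equal $\frac{\bar x d\bar x}{\bar y}$ forces $\lambda=k$ and $\mu=-kk'$. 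Substituting $x=k^2\bar x-kk'$, $y=k^3\bar y$ into $E_t$ and writing $x-t_1=k^2(\bar x-s)$ with $s=k^{-2}t_1+k^{-1}k'$ turns the equation into $\bar y^2=4(\bar x-s)^3-k^{-4}t_2(\bar x-s)-k^{-6}t_3$, which is the normalized curve $E_{t'}$ with $t'=(k^{-2}t_1+k^{-1}k',\,k^{-4}t_2,\,k^{-6}t_3)$. This is exactly the asserted formula for $t\bullet g$.

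The computation is routine once the ansatz is fixed; the only points needing care are the bookkeeping of the pullbacks and the check that the substitution returns a curve in the same normalized Weierstrass family, so that the recentering $x-t_1=k^2(\bar x-s)$ is legitimate and produces the scalings $t_2\mapsto k^{-4}t_2$, $t_3\mapsto k^{-6}t_3$. Uniqueness in Proposition \ref{manga2010} then certifies that the $t'$ found is the required one, so no separate well-definedness argument is needed.
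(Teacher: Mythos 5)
Your proposal is correct and follows essentially the same route as the paper: the bijection is quoted from Proposition \ref{sarabishoghl}, and the action formula is verified by the explicit coordinate change $(x,y)\mapsto(k^2x-k'k,\,k^3y)$, which is exactly the isomorphism $\alpha$ the paper writes down, together with the same pullback computation on $\frac{dx}{y}$ and $\frac{xdx}{y}$ and the same recentering of the Weierstrass equation. The only cosmetic difference is that you derive the substitution from an ansatz (and invoke uniqueness in Proposition \ref{manga2010} for well-definedness) where the paper states it directly.
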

\begin{proof}
 The first part follow from Proposition \ref{sarabishoghl}. The proof of the second part is 
as follows:
We first prove (\ref{gavril}). Let
$$
\alpha: \A^2_\k\rightarrow \A^2_\k,\ (x,y)\mapsto
(k^2x-k'k, k^{3}y)
$$
and $f=y^2-4(x-t_1)^3+t_2(x-t_1)+t_3$. We have
$$
k^{-6}\alpha^{*}(f)=y^2-4k^{-6} (
k^{2}x-k'k-t_1)^3+ t_2k^{-6}(
k^2x-k'k-t_1)+t_3k^{-6}=
$$
$$
y^2-4(x-k'k^{-1}-t_1k^{-2})^3+ t_2k^{-4}(x-k'k^{-1}-t_1k^{-2})+t_3k^{-6}.
$$
This implies that $\alpha$ induces an isomorphism of elliptic curves
$$
\alpha: (E_{t\bullet g}, \alpha^*(\frac{xdx}{y})\rightarrow (E_t,\frac{xdx}{y}).
$$
Since
$$
\alpha^*\frac{xdx}{y}=k \frac{xdx}{y}-k'\frac{dx}{y}
$$
we get the result.
\end{proof}

%%%%%%%%%%%%%%%%%%%%%%%%%%%%%%%%%%%%%%%%%%%%%%%%%%%%%%%%%%%%%%
\subsection{Quasi-modular forms}
A quasi-modular form $f$ of weight $m$ and differential order $n$ for $\Gamma$ is a function $T_\Gamma \to \k$ with the following properties: 
\begin{enumerate}
\item
With respect to the action of $G_m$, $f$ satisfies
\begin{equation}
\label{gmaction}
f\bullet k=k^{-m}f,\ \k\in G_m.
\end{equation}
\item
With respect to the action of $G_a$, $f$ satisfies the following condition: 
there are functions $f_i:T_\Gamma\to \k,\ i=0,1,2,\ldots,n$ such that
\begin{equation}
\label{gaaction}
f\bullet k'=\sum_{i=0}^n\bn ni{k'}^if_i,\ k'\in G_a.
\end{equation}
\item
(Growth condition)
Let $\pi: E\to T$ be a family of elliptic curves over $\k$, with possibly singular fibers, and $S\subset T$ be a subvariety of $T$ such that
%the discriminant variety, that is a fiber $E_t, t\in T(\bar\k)$ is singular if and only  if $t\in S(\bar\k)$. 
the induced map $\tilde E\to \tilde T$, where $\tilde E:=E\backslash \pi^{-1}(S)$ and $\tilde T:=T\backslash S$, is the underlying morphism of an enhanced family, say  
$(\tilde E ,*,\omega)$ with the torsion structure 
$*$ and $\omega\in H^1_\dR(\tilde E/\tilde T)$.   
 The function $f$ induces a map $\tilde T(\k)\to \k$.
We assume the following growth condition for $f$: for $E\to T$ and the torsion structure $*$  fixed as above, there is $\omega$ such that
$\tilde T(\k)\to \k$ extends to a function $T(\k)\to \k$. In other words, we may change $\omega$ in order to guarantee the extension of $\tilde T(\k)\to \k$. 
\end{enumerate}
The variety $S$ contains the loci of all closed points $t\in T(\k)$ such that either 
$E_t$ is  singular or $\omega\mid_{E_t}$ is a differential form
 of the first kind or zero or some degeneracy occurs on the torsion point structure of $E_t$. 

We denote by $\mf{n}{m}(\Gamma)$ the set of quasi-modular forms of weight $m$ and differential order 
$n$ and we set
$$
\mf{}{}(\Gamma)=\sum_{m\in \Z, n\in \N_0} \mf{n}{m}(\Gamma).
$$
When there is no confusion we drop $\Gamma$ and simply write $\mf{}{}=\mf{}{}(\Gamma)$ and so on.
If $n\leq n'$ then $\mf nm\subset \mf {n'}{m}$ and
$$
\mf nm\mf {n'}{m'}\subset \mf {n+n'}{m+m'}, \ \mf nm+\mf {n'}{m}=\mf
{n'}m.
$$
We see that $\mf{}{}$ has a structure of a graded $\k$-algebra, 
the degree of the elements of $\mf{n}{m}$ is $m$.

For $n=0$ we recover the definition of modular forms of weight $m$. A modular form of weight 
$m$ is a function from the set of enhanced elliptic curves as before but with this difference 
that $\omega\in F^1$ is a regular differential 
form and not an element in $H^1_\dR(E)\backslash F^1$. 
The action of $G_m$ is given by  $(*,*,\omega)\bullet k=(*,*, k\omega)$ and $f$ satisfies 
$f\bullet k=k^{-m}f,\ \k\in G_m$ and the growth condition  as above (see Exercise 1 at the end of this section).

Combining both actions 
(\ref{gmaction}) and (\ref{gaaction}) we have:
\begin{equation}
\label{jabr}
f\bullet g=k^{-m}\sum_{i=0}^n\bn ni{k'}^ik^{i}f_i,\ 
\forall g=\mat{k}{k'}{0}{k^{-1}}\in G.
\end{equation}

\begin{exe}\rm
\begin{enumerate}
\item There is a canonical bijection between modular forms of weight $m$  and quasi-modular forms of weight $m$ and differential order $0$. 
\item
Verify that $f_i$ is a quasi-modular form of weight $m-2i$ and differential
 order $n-i$. In particular, $f_n$ is a modular form of weight $m-2n$. 
 \item
 Show that the family mentioned in Exercise \ref{6oct2010} is the universal family of enhanced elliptic curves for $\Gamma(3)$. 
\end{enumerate}
\end{exe}

%%%%%%%%%%%%%%%%%%%%%%%%%%%%%%%%%%%%%%%%%%%555
\subsection{Full quasi-modular forms}

\label{zaifam}
Let us consider again the case $N=1$. In this case a quasi-modular form for $\Gamma$ 
is also called a full quasi-modular form. Using Proposition \ref{actiont}, any full quasi 
modular formal turns out to be a regular function on $\spec(\k[t_1,t_2,t_3])$, that is, a 
polynomial
in $t_1,t_2,t_3$ and with coefficients in $\k$. Using the same theorem, we know that 
$t_i,\ i=1,2,3$ is a quasi-modular form of weight $2i$ and
differential order $1$ for $t_1$ and $0$ for $t_2$ and $t_4$. This proves:
\begin{prop}
\label{domingodedanca}
 The $\k$-algebra of full quasi-modular forms is isomorphic to
$$
\k[t_1,t_2,t_3],\ \deg(t_i)=2t_i,\ i=1,2,3.
$$
\end{prop}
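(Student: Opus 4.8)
The plan is to assemble the isomorphism from two ingredients already in hand: the identification of the moduli space $T_\Gamma$ (for $N=1$) with the affine scheme $\spec(\k[t_1,t_2,t_3])$ via Proposition \ref{actiont}, and the explicit description of the $G$-action in coordinates given there. First I would observe that by definition a full quasi-modular form is a function $T_\Gamma\to\k$, and since Proposition \ref{actiont} gives an isomorphism $T(\k)\cong T_\Gamma$ with $T=\spec(\k[t_1,t_2,t_3,\frac{1}{\Delta}])$, any such form pulls back to a function on $T(\k)$. The content of the statement is therefore twofold: that such a function is actually a \emph{polynomial} in $t_1,t_2,t_3$ (with no poles along $\Delta$, and no other constraints), and that conversely the monomials $t_i$ are themselves quasi-modular forms of the asserted weight and differential order, so that the full algebra is exactly $\k[t_1,t_2,t_3]$ with the stated grading.

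For the forward direction I would argue as follows. A full quasi-modular form $f$ satisfies the functional equations (\ref{gmaction}) and (\ref{gaaction}) with respect to the $G$-action, which in the coordinates of Proposition \ref{actiont} reads
$$
t\bullet g=(t_1k^{-2}+k'k^{-1},\ t_2k^{-4},\ t_3k^{-6}),\quad g=\mat{k}{k'}{0}{k^{-1}}.
$$
The growth condition is the crucial point: it forces $f$, as a function on $\tilde T$, to extend to a regular function on all of $\spec(\k[t_1,t_2,t_3])$, in particular with no pole along $\Delta$. Hence $f$ is a polynomial in $t_1,t_2,t_3$. I would then check that the homogeneity under $G_m$ (the rule $f\bullet k=k^{-m}f$, i.e. invariance up to the weight factor) is consistent with any polynomial once we declare $\deg(t_i)=2i$: indeed $t_i\bullet k=k^{-2i}t_i$ from the displayed action, so the weight grading matches the degree grading exactly, and the polynomiality together with this grading is precisely the description claimed.

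For the reverse direction I would verify that each $t_i$ is genuinely a full quasi-modular form of weight $2i$. From $t_2\bullet g=t_2k^{-4}$ and $t_3\bullet g=t_3k^{-6}$ one reads off that $t_2,t_3$ have weight $4,6$ and differential order $0$ (the $G_a$-action is trivial on them, so they are in fact modular forms). For $t_1$, the action $t_1\bullet g=t_1k^{-2}+k'k^{-1}$ shows it satisfies (\ref{gmaction}) with $m=2$ and, expanding in $k'$, satisfies (\ref{gaaction}) with $n=1$: writing $t_1\bullet k'=t_1+k'\cdot 1$ gives $f_0=t_1$ and $f_1=1$, so $t_1$ has weight $2$ and differential order $1$. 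The main obstacle, and the step deserving the most care, is the precise invocation of the growth condition to rule out poles along $\Delta$ and thereby pass from ``regular function on $T=\spec(\k[t_i,\tfrac{1}{\Delta}])$'' to ``polynomial on $\spec(\k[t_1,t_2,t_3])$''; everything else is a direct transcription of the $G$-action formula. I would close by noting that the grading is multiplicative and that the $t_i$ are algebraically independent over $\k$ (being coordinates on $\A^3$), which upgrades the bijection to an isomorphism of graded $\k$-algebras, with the harmless typo $\deg(t_i)=2i$ rather than $2t_i$.
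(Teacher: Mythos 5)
Your proposal is correct and takes essentially the same route as the paper: the paper's own proof consists of exactly your two steps, deducing from Proposition \ref{actiont} (together with the growth condition) that a full quasi-modular form is a polynomial in $t_1,t_2,t_3$, and reading off from the displayed $G$-action that $t_i$ has weight $2i$ with differential order $1$ for $t_1$ and $0$ for $t_2,t_3$. Your more explicit verification of the functional equations (e.g.\ $f_0=t_1$, $f_1=1$) and your reading of $\deg(t_i)=2t_i$ as a typo for $\deg(t_i)=2i$ are both consistent with the paper, which states these same facts without further detail.
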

The $\k$-algebra of full quasi-modular forms has also a differential structure which is given 
by:
$$
 \mf{n}{m}\to \mf{n+1}{m+1},\ t\mapsto dt({\Ra})=\sum_{i=1}^3\frac{\partial t}{\partial t_i}
{\Ra}_i
$$
where $\Ra=\sum_{i=1}^3 \Ra_i\frac{\partial}{\partial t_i}$ is the Ramanujan vector field. 
We sometimes
use $\Ra: \mf {}{} \to \mf {}{}$ to denote this differential operator.

In a similar way, the family $y^2-4(x-s_1)(x-s_2)(x-s_3)=0$ is the universal family 
for the moduli of 
 $3$-tuple $(E,(P,Q),\omega)$, where $E,\omega$ are as before and
 $P$ and $Q$ are  a pair of points of $E(\k)$ that generates the $2$-torsion subgroup 
$E[N]$ with Weil pairing $e(P,Q)=-1$. The points $P$ and $Q$ are given by $(s_1,0)$ and $(s_2,0)$. 
In this case each $s_i,\ i=1,2,3$ is a quasi-modular form of weight $2$
and differential order $1$. They generate the algebra $\mf {}{}(\Gamma(2))$ freely.
 The corresponding differential structure is defined by the Halphen vector field.

%%%%%%%%%%%%%%%%%%%%%%%%%%%%%%%%%%%%%%%%%%%%%%%%%%%%%%%%%%%%%%%%%%%%%%%%%%%%%%%%%%%%%%%%%55
\subsection{Algebraic closure of the field of full quasi-modular forms}
In this section we assume that $\k$ is algebraically closed.
Let $f\in \mf nm(\Gamma)$ be a quasi-modular form for a congruence group $\Gamma$. Recall that for $\Gamma=\SL 2\Z$  we write $T=T_\Gamma$. 
Using $f$ we define full quasi-modular forms $f_i,\ i=1,2,\ldots, a:=\Gamma\backslash \SL 2\Z$ of weight $mi$ and differential order $ni$ in the following way:  
$$
f_i: T\to \k,
$$
$$ f_i(E,\omega):=\sum_{*_1,*_2,*_2,\ldots, *_i}f(E,\omega,*_1)f(E,\omega,*_2)\cdots f(E,\omega,*_i),
$$
where $*_1,*_2,*_2,\ldots, *_i$ runs through $i$-tuple of torsion structures attached to $\Gamma$. From Proposition \ref{domingodedanca} it follows that
$$
f_i\in\k[t_1,t_2,t_3],\ f_i \hbox{ homogeneous, }\deg(f_i)=mi, \ \deg_{t_1}(f_i)\leq ni
$$
and $f$ is a root of the polynomial
$$
X^a-f_1X^{a-1}+f_2X^{a-2}-\cdots+(-1)^{a-1}f_{a-1}X+(-1)^af_a.
$$
\begin{exe}\rm
The algebra $\mf {}{}(\Gamma(2))$ is freely generated by three quasi-modular form $s_1,s_2,s_3$ of weight $2$ and differential order $1$. Show that the above polynomial
for each $s_i$ is  
$$
((X-t_1)^3-\frac{1}{4}t_2(X-t_1)-\frac{1}{4}t_3)^2.
$$
\end{exe}

%%%%%%%%%%%%%%%%%%%%%%%%%%%%%%%%%%%%%%%%%%%%%%%%%
\subsection{Eisenstein modular forms}
\label{alleisen}
Let $E$ be an elliptic curve over $\k$ and $\omega\in H_\dR^1(E)\backslash F^1$. We 
take the Weierstrass coordinates of the pair $(E,\omega)$ as we have described it in 
Proposition \ref{manga2010}. Let also $t$ be a coordinate system around the point $O$, 
for instance take $t=\frac{x}{y}$. We have $\frac{dx}{y}=Pdt$ for some regular function in a 
neighborhood of $O$. 
Let us write the formal series of $P$ at $O$ and then write it as a derivation of some other 
formal power series $Q(t)$. Therefore,
$$
\frac{dx}{y}=dz,\ \ z:=Q(t)\in \ring\{t\},\ 
$$
$$
Q(0)=0, \ \ \frac{\partial Q}{\partial t}(0)=-2.
$$
We call $z$  the analytic coordinate system on $E$.  Note that the first coefficients in $t^3y, t^2x$ are invertible and so 
$Q(t)$ has coefficients in $\ring$. 
\begin{prop}
\label{paulana}
 We have 
$$
x=\frac{1}{z^2}+\sum_{k=1}^{\infty}g_{2k+2}z^{2k},\ 
$$
and
$$
y=\frac{\partial x}{\partial z}=\frac{-2}{z^3}+\sum_{k=1}^{\infty}2k\cdot g_{2k+2}z^{2k},\ g_{2k+2}\in\ring.
$$
\end{prop}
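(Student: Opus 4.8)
The plan is to reduce the whole statement to the single assertion about the Laurent expansion of $x$ in the analytic coordinate $z$, since the formula for $y$ then comes for free: the defining relation $dz=\frac{dx}{y}$ gives $y=\frac{\partial x}{\partial z}$, so once the series for $x$ is known, termwise differentiation produces exactly the claimed series for $y$, with leading term $-\frac{2}{z^3}$ and coefficients $2k\cdot g_{2k+2}$. Thus the entire content is to prove that $x$, as a Laurent series in $z$, has the shape $z^{-2}+\sum_{k\geq1}g_{2k+2}z^{2k}$ with $g_{2k+2}\in\ring$; that is, (i) only even powers of $z$ occur, so in particular no $z^{-1}$ term survives, (ii) the leading coefficient is exactly $1$, and (iii) every coefficient lies in $\ring$. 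I would establish these three points separately, using respectively the elliptic involution, the residue expansion (\ref{residue}), and an integral substitution.

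For the evenness I would invoke the involution $\sigma:E\to E$, $(x,y)\mapsto(x,-y)$, which fixes $O$. Since $\sigma^*\left(\frac{dx}{y}\right)=\frac{dx}{-y}=-\frac{dx}{y}$, we get $d(\sigma^*z)=-dz$, and because both $\sigma^*z$ and $-z$ vanish at the fixed point $O$ this forces $\sigma^*z=-z$. As $x$ is $\sigma$-invariant, the series $F(z)$ representing $x$ satisfies $F(z)=F(-z)$, killing all odd powers of $z$ and in particular the $z^{-1}$ term. For the leading coefficient I would invert the relation $z=Q(t)=-2t+O(t^2)$, which is legitimate since $2$ is invertible in $\ring$, to obtain $t=-\frac12 z+O(z^2)$, and substitute into $x=\frac14 t^{-2}+O(t^0)$ from (\ref{residue}); this yields $x=z^{-2}+\cdots$, pinning the leading coefficient to $1$. (Equivalently, matching the top order $z^{-6}$ in the Weierstrass relation $y^2=4(x-t_1)^3-t_2(x-t_1)-t_3$ with $y=\frac{\partial x}{\partial z}$ forces the same value.)

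Integrality is the point I expect to require the most care, because the residue calculus of the earlier sections was set up over the fraction field $\k_1$. The clean route is the same substitution: by the remark preceding the proposition, $t^2x$ and $t^3y$ have invertible leading coefficients and lie in $\ring\{t\}$, so $x\in\ring((t))$; since $t=t(z)\in\ring[[z]]$ has invertible linear coefficient, composing gives $x\in\ring((z))$, whence every $g_{2k+2}\in\ring$. As a cross-check I would also feed $x=z^{-2}+c_0+\sum_{k\geq1}g_{2k+2}z^{2k}$ and $y=\frac{\partial x}{\partial z}$ into the Weierstrass relation and compare coefficients order by order: the $z^{-4}$ coefficient is absent on the $y^2$ side, which kills the constant term $c_0$ of $x-t_1$, and the ensuing recursion expresses each $g_{2k+2}$ as a polynomial in $t_1,t_2,t_3$ over $\ring$. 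Here one may simplify by using the isomorphism $(x,y)\mapsto(x-t_1,y)$ of \S\ref{modulisection}, which preserves $\frac{dx}{y}$ and hence $z$, to reduce to $t_1=0$, where the normalized shape with no constant term appears verbatim.

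The only genuine obstacle is bookkeeping rather than conceptual: organizing the recursion (or the composition of power series) so that it is visibly clear that invertibility of $2$ and $3$ in $\ring$ is all that is needed to keep every coefficient inside $\ring$, and justifying the passage $\sigma^*z=-z$ at the level of the formal/analytic coordinate. Everything else is the classical Weierstrass $\wp$-computation transported into the algebraic setting.
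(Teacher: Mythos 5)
Your proof follows the same skeleton as the paper's: evenness via the involution $(x,y)\mapsto(x,-y)$ (the paper simply asserts that $t\mapsto -t$ and $z\mapsto -z$; your justification through $\sigma^*(\frac{dx}{y})=-\frac{dx}{y}$, hence $d(\sigma^*z)=-dz$ with both sides vanishing at $O$, fills that in), integrality by inverting $z=Q(t)=-2t+O(t^2)$ over $\ring$ exactly as in the paper (``$Q\in\ring\{t\}$ and so $\tilde Q\in\ring\{z\}$'', resting on the invertibility of $2$), the leading coefficient from the expansions (\ref{residue}), and the series for $y$ from $dx=y\,dz$, which is also the paper's closing line. The one place you genuinely diverge is the constant term, and there your version is the sharper one. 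The paper asserts (and delegates to an exercise) that ``$g_0=g_2=0$''; but as your order-by-order matching shows, the absence of a $z^{-4}$ term in $y^2$ forces only the constant term of $x-t_1$ to vanish, so the constant term of the $x$ normalized by $\omega=\frac{xdx}{y}$ as in Proposition \ref{manga2010} is $t_1$, not $0$, and the coefficient of $z^2$ is $t_2/20\neq 0$, so under the proof's own indexing $x=\sum_{k\geq -2}g_kz^k$ the claim ``$g_2=0$'' cannot be taken literally either. Your remedy --- the shift $(x,y)\mapsto(x-t_1,y)$ of \S\ref{modulisection}, which preserves $\frac{dx}{y}$ and hence $z$ --- is legitimate for everything the proposition is used for: it changes the marked class $\omega$ to $\omega+t_1\frac{dx}{y}$, but since $x$ and $x-t_1$ differ by a constant, all coefficients $g_{2k+2}$, $k\geq 1$, are untouched, so the definition of the Eisenstein forms $G_{2k+2}$ and their functional equations are unaffected; only the constant term (which is exactly the quasi-modular quantity $t_1$, consistent with its non-invariance under the $G_a$-action noted in the next proposition) falls outside the stated shape. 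In short: correct, same approach as the paper, with a more careful treatment of the one step the paper leaves vague, and your caveat about when the statement holds verbatim is warranted.
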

\begin{proof}
We have $z=Q(t)=-2t+O(t^ 2)$ and write $t$ in terms of $z$, that is, $t=\tilde Q(z)=
\frac{-1}{2}z+O(z^ 2)$. Note that $Q\in \ring\{t\}$ and so $\tilde Q\in \ring\{z\}$. We write $x$ in 
terms of $z$ and we have: $x=\sum_{k=-2}^{\infty}g_{k}z^{k}$ for some $g_k\in\ring$.
The elliptic curve $E$ is invariant under the involution $(x,y)\mapsto (x,-y)$. The coordinates $t$ and $z$ are mapped to 
$-t$ and $-z$, respectively, and $x$ is invariant. This implies that $g_{k}=0$ for all odd integers $k$. Calculating $g_0,g_2$ we see that $g_0=g_2=0$. 
The expansion of $y$ follows from the equality $dx=ydz$.
\end{proof}
\begin{prop}
 The mapping $(E,\omega)\to g_{2k+2}$ is a full modular form of weight $2k+2$. 
\end{prop}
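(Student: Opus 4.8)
The plan is to verify, for $\Gamma=\SL 2\Z$ (so $N=1$ and there is no torsion datum to carry), the three defining properties of a full modular form of weight $m=2k+2$: the $G_m$-equivariance (\ref{gmaction}), the $G_a$-invariance (\ref{gaaction}) with differential order $n=0$, and the growth condition. First I would settle well-definedness. Given $(E,\omega)$ with $\omega\in H^1_\dR(E)\backslash F^1$, the regular form $\omega_1$ with $\langle\omega_1,\omega\rangle=1$ is uniquely determined, Proposition \ref{7.3.8} produces unique Weierstrass coordinates $x,y$ with $\frac{dx}{y}=\omega_1$, and the analytic coordinate $z$ is the unique local primitive of $\omega_1=dz$ vanishing at $O$. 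Hence the coefficients $g_{2k+2}$ of Proposition \ref{paulana} depend only on the isomorphism class of $(E,\omega_1)$, and so on $(E,\omega)$.

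The core of the argument is the $G_m$-transformation. By Proposition \ref{actiont} an element $k\in G_m$ sends $(E,\omega)$ to $(E,k^{-1}\omega)$; since the intersection form is anti-symmetric this forces the associated regular form to transform by $\omega_1\mapsto k\omega_1$. I would therefore track how the normal form of Proposition \ref{paulana} changes under $\omega_1\mapsto\lambda\omega_1$. Replacing $\frac{dx}{y}$ by $\lambda\frac{dx}{y}$ rescales the Weierstrass coordinates to $\lambda^{-2}x,\ \lambda^{-3}y$ (the $k=\lambda,\ k'=0$ case of Proposition \ref{actiont}, where $t_i\mapsto\lambda^{-2i}t_i$), while $dz=\omega_1$ forces the analytic coordinate to rescale by $z\mapsto\lambda z$. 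Substituting $z=\lambda^{-1}Z$ into $x=z^{-2}+\sum_{k\geq 1}g_{2k+2}z^{2k}$ and multiplying by $\lambda^{-2}$ to restore the leading coefficient $1$, the coefficient of $Z^{2k}$ becomes $\lambda^{-(2k+2)}g_{2k+2}$. This gives $g_{2k+2}(E,\lambda\omega_1)=\lambda^{-(2k+2)}g_{2k+2}(E,\omega_1)$, which is precisely (\ref{gmaction}) with $m=2k+2$.

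For the differential order I would observe that the $G_a$-action $\omega\mapsto k'\omega_1+\omega$ leaves $\omega_1$ fixed, because $\langle\omega_1,k'\omega_1+\omega\rangle=\langle\omega_1,\omega\rangle=1$ by anti-symmetry; hence $g_{2k+2}$ is $G_a$-invariant, which is (\ref{gaaction}) with $n=0$ and $f_0=f$, confirming that $g_{2k+2}$ is a genuine modular form and not merely quasi-modular. For the growth condition I would show $g_{2k+2}$ is a polynomial in the $t_i$. Differentiating $(\frac{dx}{dz})^2=y^2=4(x-t_1)^3-t_2(x-t_1)-t_3$ yields the Weierstrass identity $\frac{d^2x}{dz^2}=6(x-t_1)^2-\frac{1}{2}t_2$; substituting the expansion of Proposition \ref{paulana} and comparing coefficients of $z$ produces a recursion that expresses each $g_{2k+2}$ as a polynomial in $t_2,t_3$ with rational coefficients (in particular independent of $t_1$ for $k\geq 1$, which matches both the absence of a constant term in the expansion and the vanishing differential order). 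Being regular in the $t_i$, it extends across the discriminant locus $\Delta=0$ of singular fibers, giving the growth condition.

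The step I expect to demand the most care is the weight computation: one must simultaneously bookkeep the rescaling of the Weierstrass coordinates and of the analytic coordinate $z$, and a single exponent slip there would change the weight. The polynomiality required by the growth condition is the other delicate point, since Proposition \ref{paulana} a priori only places $g_{2k+2}$ in the localized ring $\ring$; it is the Weierstrass recursion that upgrades this to an honest polynomial with no power of $\Delta$ in the denominator.
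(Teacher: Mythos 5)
Your proposal is correct in substance, and for the two group actions it takes essentially the paper's route: the paper also verifies $G_a$-invariance by noting that the Weierstrass coordinates of $(E,\omega)\bullet k'$ are $(x+k',y)$, so $\frac{dx}{y}$ and hence $z$ are unchanged, and verifies the $G_m$-equivariance via $(\tilde x,\tilde y)=(k^{-2}x,k^{-3}y)$, $\tilde z=kz$, which is exactly your substitution $z=\lambda^{-1}Z$ with $\lambda=k$. Your packaging through the normalization $\langle\omega_1,\omega\rangle=1$ (so that $\omega\mapsto k^{-1}\omega$ forces $\omega_1\mapsto k\omega_1$, while $\omega\mapsto k'\omega_1+\omega$ fixes $\omega_1$) is a clean intrinsic version of the same computation; note only that in the $G_m$ step it is bilinearity of the pairing that is used, anti-symmetry (i.e.\ $\langle\omega_1,\omega_1\rangle=0$) being needed precisely in the $G_a$ step, where you do invoke it. Where you genuinely depart from the paper is the growth condition: the paper dispatches it in one line, ``all the coefficients are in $\ring$,'' which is weaker than what the definition demands, since $\ring=\k[t_1,t_2,t_3,\frac{1}{\Delta}]$ is localized at $\Delta$ and membership in $\ring$ does not by itself exclude poles along $\Delta=0$. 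Your Weierstrass recursion coming from $\left(\frac{dx}{dz}\right)^2=4(x-t_1)^3-t_2(x-t_1)-t_3$, i.e.\ $\frac{d^2x}{dz^2}=6(x-t_1)^2-\frac{1}{2}t_2$, which places $g_{2k+2}$ in $\Q[t_2,t_3]$, is the honest argument here and buys exactly the extension across the discriminant locus; you correctly identified this as the delicate point.

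One bookkeeping caution: run the recursion for $u:=x-t_1$ rather than for $x$. It is $u$ that satisfies $u''=6u^2-\frac{1}{2}t_2$ with a constant-term-free expansion ($u=\wp(z;t_2,t_3)$), whereas the constant term of $x$ itself is $t_1$, not $0$ — the claim $g_0=0$ in the proof of Proposition \ref{paulana} cannot hold as printed, as your own recursion detects: comparing $z^{-2}$-coefficients in $\frac{d^2x}{dz^2}=6(x-t_1)^2-\frac{1}{2}t_2$ with a constant-term-free $x$ yields $0=-12t_1$. This is harmless for the proposition, which concerns only the coefficients of $z^{2k}$ with $k\geq 1$; it also explains why the $G_a$-action, which shifts $x$ by a constant, leaves precisely those coefficients invariant, and it tightens your well-definedness step: Proposition \ref{paulana} is stated in the coordinates of Proposition \ref{manga2010}, which depend on $\omega$ through $t_1$, not in those of Proposition \ref{7.3.8}, but the two differ by the shift $x\mapsto x+t_1$, so the $g_{2k+2}$, $k\geq 1$, do depend only on $(E,\omega_1)$, as you claim.
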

We denote this modular form with $G_{2k+2}$ and we call it the Eisenstein modular form of 
weight  $2k+2$.
\begin{proof}
The third property in the definition of a quasi-modular form follows from the fact that in the process of defining $G_{2k+2}$, all the coefficients are in $\ring$.
For $k\in G_a$,  the Weierstrass coordinates system  of $(E,\omega)\bullet k$ is $(x+k,y)$ and so $\frac{dx}{y}$ does not change. This implies that $g_{2k+2}$'s are invariant under the action of $G_a$.
 For $k\in G_m$,  the Weierstrass coordinates system  of $(E,\omega)\bullet k$ is $(\tilde x,\tilde y)=(k^{-2}x,k^{-3}y)$.
In this coordinates system $\tilde t=kt $ and $\tilde z=kz$ which give us the desired functional property of $g_{2k+2}$'s with respect to the action of $G_m$.  
\end{proof}

\begin{exe}\rm
Show the last piece of the proof of Proposition \ref{paulana}, that is, $g_0=g_2=0$. Calculate $G_4$ and $G_6$.   
\end{exe}

%%%%%%%%%%%%%%%%%%%%%%%%%%%%%%%%%%%%%%%%%%%%%%%%%%%%%%%%%%%%%%%%%%%%%%%%%%%%55
%%%%%%%%%%%%%%%%%%%%%%%%%%%%%%%%%%%%%%%%%%%%%%%%%%%%%%%%%%%%%%%%%%%5
\section{Quasi-modular forms over $\C$}
\subsection{Introduction}
\label{int06}
The name quasi-modular form seems to appear for the first time in the work \cite{kaza95} of M. Kaneko and D. Zagier. In this article they give a direct proof for a formula stated by 
R. Dijkgraaf in \cite{dij95} which  deals with counting ramified covering of elliptic curves, see \S\ref{ellipticramified}. In fact if we want to extend the algebra of full modular forms to an algebra which is 
closed under the canonical derivation then we naturally suspect the existence of the Eisenstein series $E_2$. 
The full functional equation can be derived by consecutive derivations of the 
functional equation of a modular form. This is the way in which the author of the present text rediscovered all these, see \cite{ho06-2}, and for this reason called them differential modular 
forms.

The classical modular or quasi-modular forms are holomorphic functions on the Poincar\'e upper half plane 
which satisfy a functional property with respect to the action of a congruence subgroup of $\SL 2\Z$ on $\uhp$ and have some growth condition at infinity. 
The aim of of this section is to show that what we we have developed so far in the context of algebraic geometry is essentially the 
same as its complex counterpart. The bridge between two notions is the period map which is constructed by elliptic integrals.
%If we analyze precisely  the history of mathematics we see that the notion of 
%a  differential form is  developed because of the need for multiple integrals on 
%manifolds.   
\subsection{Quasi-modular forms}
\label{dmf}
In this section we recall the definition of quasi/differential modular forms. 
For more details see \cite{maro05,ho06-2}. 
We use the notations
$A=\mat {a_A}{b_A}{c_A}{d_A}\in\SL 2\R$ and
$$
I=\mat{1}{0}{0}{1},\ T=\mat 1101,\ Q=\mat 0{-1}10.
$$
When there is no confusion we will simply write  $A=\mat abcd$.
We denote by $\uhp$ the Poincar\'e upper half plane and 
$$\ja (A,z):=c_Az+d_A.$$ For $A\in \GL(2,\R)$ and $m\in\Z$ we use the
slash operator
$$
f|_mA=(\det A)^{m-1}\ja(A,z)^{-m}f(Az).
$$
Let $\Gamma$ be a subgroup of $\SL 2\Z$.  For instance, take a congruence group 
of level $N$. This is by definition any subgroup 
$\Gamma\subset \SL 2\Z$ which  contains:
$$
\Gamma(N):=\{A\in \SL 2\Z\mid A\equiv \mat{1}{0}{0}{1}\ (\text{ mod } N)\}.
$$
%where $I_2=\mat{1}{0}{0}{1}$. 
It follows that $\Gamma$ has finite index in $\SL 2\Z$. Our main examples are $\Gamma(N)$ itself and
$$
\Gamma_0(N):=\{A\in \SL 2\Z\mid A\equiv \mat{*}{*}{0}{*}\ (\text{ mod } N)\},
$$
$$
\Gamma_1(N):=\{A\in \SL 2\Z\mid A\equiv \mat{1}{*}{0}{1}\ (\text{ mod } N)\}.
$$
We define the notion of an $\mf nm(\Gamma)$-function, a quasi-modular form of 
weight $m$ and differential order $n$ for $\Gamma$. For simplicity we write 
$\mf{n}{m}(\Gamma)=\mf{n}{m}$.  For $n=0$
an $\mf 0m$-function is a classical modular form of weight
$m$ on $\uhp$ (see bellow).
 A holomorphic function $f$ on $\uhp$ is called $\mf nm$
if the following two conditions are satisfied:
\begin{enumerate}
\item
There are holomorphic functions $f_i,\ i=0,1,\ldots,n$ on $\uhp$ such that 
\begin{equation}
\label{4feb05}
f|_m A=\sum_{i=0}^{n}\bn{n}{i} c_A^i\ja(A,z)^{-i}f_i, \ \forall A\in \Gamma.
\end{equation}
\item
$f_i\mid_mA, i=0,1,2,\ldots,n$ have finite growths when $\Im(z)$ tends to $+\infty$ for all $A\in\SL 2\Z$.
% i.e.
%$$
%\lim_{\Im(z)\to +\infty}f_i(z)=a_{i,\infty} <\infty,\ a_{i,\infty}\in\C.
%$$
\end{enumerate}
We will also denote  by $\mf nm$ the set of $\mf nm$-functions and
we set
$$
\mf{}{}:=\sum_{m\in\Z, n\in\N_0}\mf nm
$$
For an $f\in\mf{n}{m}$ we have $f|_mI=f_0$ and so $f_0=f$. 
Note that for an
$\mf nm$-function $f$ the associated functions $f_i$ are unique.
If $f$ is $\mf{n}{m}$-function with the associated functions $f_i$ then $f_i$ is an
$\mf{n-i}{m-2i}$-function with the associated functions $f_{ij}:=f_{i+j}$. The set $\mf{}{}$ is a bigraded differential $\C$-algebra:
$$
\diff{}: \mf nm \to \mf {n+1}{m+2}
$$
If $n\leq n'$ then $\mf nm\subset \mf {n'}{m}$ and
$$
\mf nm\mf {n'}{m'}\subset \mf {n+n'}{m+m'}, \ \mf nm+\mf {n'}{m}=\mf
{n'}m
$$
It is useful to define
\begin{equation}
\label{26feb05} f||_mA:=(\det A)^{m-n-1}\sum_{i=0}^{n}\bn{n}{i}
c_{A^{-1}}^i\ja(A,z)^{i-m}f_{i}(Az),\ A\in \GL (2, \R),\ f\in\mf nm.
\end{equation}
%The factor $\det A$ is introduced because of Hecke operators
%(see \S \ref{heop}).
The
equality (\ref{4feb05}) is written in the form
\begin{equation}
\label{13feb05} f=f||_{m}A, \forall A\in\Gamma
\end{equation}
One can prove that
$$
f||_m A=f||_m(BA),\ \forall A\in \GL(2,\R),\ B\in\Gamma,\ f\in\mf{n}{m}.
$$
Using this we can prove that the growth at infinity condition on $f$ is a finite number of conditions for $f||_m\alpha, \ \alpha\in\Gamma\backslash \SL 2\Z$.
The relation of $||_m$ with $\diff{}$ is given by:
\begin{equation}
\label{khordim} \diff{(f||_mA)}= \diff{f}||_{m+2}A,\ \forall  A\in\GL(2,\R).
\end{equation}
Let $A\in\SL 2\Z$. If $f\in\mf{n}{m}(\Gamma)$ with the associated functions $f_i$ then $f||_mA\in\mf{n}{m}(A^{-1}\Gamma A)$ with the associated  functions $f_i||_mA\in \mf{n-i}{m-2i}(A^{-1}\Gamma A) $.

\subsection{$q$-expansion}
Let us  assume that there is $h\in \N$ such that
$$
T_{h}:=\mat{1}{h}{0}{1}\in \Gamma
$$
Take $h>0$ the smallest one. Recall that $\Gamma$ is a normal subgroup of $\SL 2\Z$. 
For an $f\in\mf{n}{m}(\Gamma)$ and $A\in\SL 2\Z$ with $[A]=\alpha\in \Gamma\backslash \SL 2\Z$ we have $(f||_mA)|_mT_{h}=f$ and so 
we can write the Fourier expansion of $f||_mA$ at $\alpha$
$$
f||_mA=\sum_{n=0}^{+\infty}a_n q_{h}^{n},\ a_n\in \C,\
\ q_{h}:=e^{2\pi i h z}.
$$
We have used the growth condition on $f$ to see that the above function in $q_{h}$ is holomorphic at $0$.
%-----------------------------------

%-------------------------------------------------------------
\subsection{Period domain}
Quasi-modular forms are best viewed  as holomorphic functions on
the period domain
\begin{equation}
\label{perdomain}
\pedo:= \left \{
\mat {x_1}{x_2}{x_3}{x_4}\in\C\mid, x_i\in\C,\ x_1x_4-x_2x_3=1,\  \Im(x_1\ovl{x_3})>0 \right \}.
\end{equation}
We let the group $\SL 2\Z$ (resp. $G$ in (\ref{2nov10}) with $\k=\C$) act from the left (resp. right) on $\pedo$ by usual multiplication of matrices.
The Poincar\'e upper half plane $\uhp$ is embedded in $\pedo$ in the following
way:
$$
z\rightarrow \tilde z=\mat{z}{-1}{1}{0}.
$$
We denote by $\tilde\uhp$ the image of $\uhp$ under this map. Note that any element of $\pedo$ is equivalent to an element of 
$\tilde \uhp$ under the action of $G$ because:
\begin{equation}
\label{neeb}
\mat{x_1}{x_2}{x_3}{x_4}=\mat{\frac{x_1}{x_3}}{-1}{1}{0}
\mat{x_3}{x_4}{0}{\frac{\det(x)}{x_3}}.
\end{equation}
The map 
$$
J:\GL (2,\R)\times \uhp\to G,\ J(A,z)=\mat{\ja(A,z)}{-c}{0}{\ja(A,z)^{-1}\det(A)}
$$
is an automorphy factor, that is, it satisfies the functional equation:
$$
J(AB,z)=J(A,Bz)J(B,z), \ A,B\in \GL (2,\R),\ z\in\uhp.
$$
This follows from the equality
$$
A
\mat{z}{-1}{1}{0}=\mat{Az}{-1}{1}{0}J(A,z),\
A\in\GL (2,\R),\ z\in\uhp.
$$
\begin{prop}
\label{27oct2010}
Quasi-modular forms  $f\in\mf{n}{m}$ are in a one to one correspondence with holomorphic functions 
$F=\phi(f):\pedo\to \C$ with the following properties:
\begin{enumerate}
\item
The function $F$ is $\Gamma$-invariant.
\item
There are holomorphic functions $F_i:\pedo\to \C,\ i=0,1,\ldots,n$ such that
\begin{equation}
\label{7feb} F(x\cdot g)=k^{-m}\sum _{i=0}^n \bn ni
{k'}^{i}k^{i}F _i(x), \ \forall x\in\pedo,\ g\in G,
\end{equation}
\item
For all $\alpha\in\SL 2\Z$ the restriction of $F_i$ to $\tilde\uhp_\alpha$ has finite growth at infinity,
 where 
$\tilde\uhp_\alpha$ is be the image of $\tilde\uhp$ under the action of $\alpha$ from the left on $\pedo$.
\end{enumerate}
\end{prop}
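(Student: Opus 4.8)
The plan is to construct the correspondence $\phi$ explicitly from the factorization \ref{neeb} and to reduce both directions to a single alternating-sum collapse. Given $f\in\mf nm$ with its uniquely determined associated functions $f_0=f,f_1,\dots,f_n$, I first observe that on $\pedo$ the entry $x_3$ never vanishes (otherwise $\Im(x_1\ovl{x_3})=0$), so $z:=x_1/x_3\in\uhp$ and, by \ref{neeb}, every $x\in\pedo$ is uniquely $x=\tilde z\cdot g$ with $g=\mat{x_3}{x_4}{0}{x_3^{-1}}\in G$. I would then set $F=\phi(f)$ by
\begin{equation*}
F(x):=x_3^{-m}\sum_{i=0}^n\bn ni (x_3x_4)^i\, f_i\!\left(\tfrac{x_1}{x_3}\right),
\end{equation*}
and more generally $F_i:=\phi(f_i)$, viewing $f_i$ as an element of $\mf{n-i}{m-2i}$ with associated functions $f_{i+j}$. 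Since each $f_i$ is holomorphic on $\uhp$ and $x_3^{-1},\,x_1/x_3$ are holomorphic on $\pedo$ with $x_1/x_3\in\uhp$, the function $F$ is holomorphic, and by construction it restricts on $\tilde\uhp$ to $f$, with $F_i$ restricting to $f_i$.

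Next I would check the three properties. Property 2 (the $G$-functional equation \ref{7feb}) is a direct computation: writing $x\cdot g=xg$ and using that $x_1/x_3$ is fixed by the right $G$-action while the bottom row transforms linearly, matching the coefficient of each $f_l(x_1/x_3)$ reduces to the identity $\bn ni\bn{n-i}{j}=\bn nl\bn li$ with $l=i+j$. Property 1 ($\Gamma$-invariance) is where the automorphy factor does the work: from $A\widetilde{z}=\widetilde{Az}\,J(A,z)$ with $J(\gamma,z)=\mat{\ja(\gamma,z)}{-c_\gamma}{0}{\ja(\gamma,z)^{-1}}\in G$ for $\gamma\in\Gamma$, property 2 gives
\begin{equation*}
F(\gamma\tilde z)=\ja(\gamma,z)^{-m}\sum_{i=0}^n\bn ni(-c_\gamma)^i\ja(\gamma,z)^i\,f_i(\gamma z),
\end{equation*}
and substituting the transformation law \ref{4feb05} for each $f_i$ (as an $\mf{n-i}{m-2i}$-function) collapses the double sum: the coefficient of $c_\gamma^l\ja(\gamma,z)^{-l}f_l(z)$ becomes $\bn nl\sum_{i=0}^l\bn li(-1)^i=\bn nl(1-1)^l$, which vanishes for $l>0$ and equals $1$ for $l=0$, leaving $F(\gamma\tilde z)=f(z)=F(\tilde z)$. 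Running the same collapse for every $F_i$ and combining it with associativity $\gamma(\tilde z g)=(\gamma\tilde z)g$ and property 2 upgrades this to $\Gamma$-invariance on all of $\pedo$. Property 3 is the identical computation performed with an arbitrary $\alpha\in\SL 2\Z$ in place of $\gamma$: it expresses $F_i(\alpha\tilde z)$ through the functions $f_j|_{m-2j}\alpha$, whose finite growth as $\Im(z)\to+\infty$ is exactly the second condition in the definition of $\mf nm$.

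For the inverse I would put $\psi(F):=F|_{\tilde\uhp}$, i.e. $f(z):=F(\tilde z)$ and $f_i(z):=F_i(\tilde z)$, noting that the $F_i$ appearing in property 2 are unique: read as an identity of polynomials in $k'$, \ref{7feb} forces each $F_i(x)$ to be a definite coefficient. Reversing the collapse of the previous paragraph — now deriving \ref{4feb05} from properties 1 and 2 — shows $\psi(F)\in\mf nm$ with associated functions the $f_i$, and the growth hypothesis transfers back from property 3. Finally $\psi\circ\phi=\mathrm{id}$ is immediate from the construction, while $\phi\circ\psi=\mathrm{id}$ follows because $\phi(\psi(F))$ and $F$ agree on $\tilde\uhp$ and have the same restricted component functions there, so \ref{7feb} propagates the agreement to all of $\pedo=\tilde\uhp\cdot G$. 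I expect the main obstacle to be the bookkeeping of the two nested binomial sums so that the alternating-sum cancellation is transparent; conceptually, the crux is that the left $\Gamma$-action on $\pedo$ absorbs the cocycle $J(\gamma,z)$ into the right $G$-action, converting the weight-$m$, order-$n$ rule \ref{4feb05} into plain invariance.
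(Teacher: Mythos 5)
Your proof is correct and is precisely the argument the paper intends but never writes out: Proposition \ref{27oct2010} is stated without proof (the text only remarks that $F_i=\phi(f_i)$), and the factorization (\ref{neeb}) together with the automorphy factor $J(A,z)$, both introduced immediately before the statement, are exactly the tools your construction deploys. All the key steps check out — the explicit formula $F(x)=x_3^{-m}\sum_i\bn ni(x_3x_4)^i f_i(x_1/x_3)$ (legitimate since $x_3\neq 0$ and $x_1/x_3\in\uhp$ on $\pedo$), the identity $\bn nl\bn li=\bn ni\bn{n-i}{l-i}$ for the $G$-equivariance, the collapse $\sum_{i=0}^l\bn li(-1)^i=0$ for $\Gamma$-invariance, the extraction of the unique $F_i$ as coefficients of the polynomial identity in $k'$, and the propagation of agreement from $\tilde\uhp$ to $\pedo=\tilde\uhp\cdot G$ for the inverse.
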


In fact we have $F_i=\phi(f_i)$.
It is a mere calculation to see that the vector field
\begin{equation}
\label{2.09.08}
X:=x_2\frac{\partial}{\partial x_1}+ x_4\frac{\partial}{\partial x_3}
\end{equation}
is invariant under the action of $\SL 2\Z$ and hence it induces a vector 
field in the quotient $\Gamma\backslash \pedo$. 
Now, viewing quasi-modular forms as functions on $\Gamma\backslash \pedo$, the 
differential operator is given by the vector field $X$.

\section{Elliptic integrals}
\label{ellipticsection}
\subsection{Introduction}
The study of multiple integrals requires an independent study of their domain of integration. For the elliptic integral (\ref{odeio}) we have apparently five 
different domain of integration: $[-\infty,a_1], [a_1,a_2], [a_2,a_3],\ [a_3,+\infty]$, where $a_1,a_2,a_3$ are three consecutive roots of $P$. However, the complexification and then
the geometrization process of such integrals, showed that if we fix the integrand $\omega$ then all such integrals can be written in terms of just two of them. All such integrals up to some
constants, which can be calculated effectively, can be written in the form $\int_{\delta}\omega$, where $\delta$ is an element in the homotopy group $G:=\pi_1(E,b)$ of the elliptic curve 
$E: y^2=P(x)$ with a base point $b\in E$. 
The integral is zero over the group $[G,G]$ generated by the commutators of $G$, that is, elements of the form $\delta_1\delta_2\delta_1^ {-1}\delta_2^{-1}$, and so we can choose 
$\delta$ from the first homology group:
$$
\delta\in H_1(E,\Z):=G/[G,G].
$$ 
The homology group $H_1(E,\Z)$ is a $\Z$-module of rank two and in this way we get our affirmation.
This simple observation can be considered as the beginning of singular homology. H. Poincar\'e  in his book named {\it Analysis Situs}  founded the Algebraic Topology and if we look more 
precisely 
for his motivation, we find his articles on multiple integrals. E. Picard was a first who wrote the treatise {\it Th\'eorie des fonctions  alg\'ebriques de deux variables ind\'ependantes} 
on  double integrals.  For him the  study of the integration domain
was justified by the study of integrals, but after him and in particular for S. Lefschetz, it was done as an independent subject. The classical theorems of Lefschetz  on the 
topology of algebraic varieties are an evidence to 
this fact. One of the beautiful topological theories which arose from the study of elliptic and multiple integrals is the so called Picard-Lefschetz theory. A brief description of this is done in 
\S\ref{monodromysection}. It can be considered as the complex counterpart of Morse theory, however, it is older than it. In families of elliptic curves that we consider the topology
is fixed, all elliptic curves are tori. However, some interesting phenomena happens when we turn around degenerated elliptic curves. This is described by Picard-Lefschetz theory. It says that
when we are dealing with families of elliptic curves, $\SL 2\Z$ and its subgroups are there, even we do not mention  them explicitly.

I have avoided to use Weierstrass uniformization theorem for elliptic curves. Instead, I have tried to use Hodge theoretic arguments in order to derive many statements which follow from it.
The main reason for this is the possibility of the generalizations for other type of  varieties. For an introduction to Hodge theory (Hodge structures, period maps, Torelli problem and etc.) 
the reader is referred to the books of C. Voisin (2003).  %\cite{vo02, vo03}.
In \S \ref{inversesection} we describe how Eisenstein series appear in the inverse of the period map. The case of $E_4$ and $E_6$ follow from the Weierstrass uniformization theorem, however, 
the case of $E_2$ is not covered by this theorem.  In order to calculate the $q$-expansion of a quasi modular form we need first its differential equation and second, its first coefficients.
In order to calculate such coefficients we need to write down the Taylor series of elliptic integrals in a degenerated elliptic curve. Doing this, we get the formulas of elliptic 
integrals in terms of hypergeometric functions (see \ref{hypergeometricsection}) and immediately after, we give another characterization of the Ramanujan differential equation in terms of 
hypergeometric functions (see \S\ref{perram}). This is in fact the method which G. Halphen used to derive a differential equation 
depending on three parameters of the Gauss 
hypergeometric function. 
We also get the Schwarz map whose image gives us the well-known triangulation of the upper half plane such that each triangle is a fundamental domain for $\SL 2\Z$.
 The bridge between the analytic and algebraic versions of quasi-modular forms is the notion of period map constructed 
from elliptic integrals (see \S\ref{permap}). In this section we also explain this. 

%Many results in this section follows from Weierstrass 
%uniformization theorem, however, we have used Hodge theoretic arguments in their proof in order to emphasize the possibility of 
%the generalization of quasi modular forms to other type of varieties apart from elliptic curves. This section also explains the 
%historical origins of Gauss-Manin connection, Halphen differential equation and etc. 

\begin{exe}\rm
Calculate the integrals (\ref{boaa}), where $\delta$ is one of 
$[-\infty,a_1], [a_1,a_2], [a_2,a_3],\ [a_3,+\infty]$ and $a_1,a_2,a_3$ are three consecutive roots of $P$ in terms of two of them.
\end{exe}

%%%%%%%%%%%%%%%%%%%%%%%%%%%%%%%%%%%%%%55
\subsection{The monodromy group}
\label{monodromysection}
In the framework of algebraic geometry, the arithmetic group $\SL 2\Z$  is hidden and it appears as the 
monodromy group of the family of elliptic curves $E_t$ over $\C$. In order to calculate such a monodromy group 
we need a machinery called Picard-Lefschetz theory. See for instance \cite{arn} for the local version of such a theory and 
see \cite{lam81} for a global version. 

The elliptic curve  $E_t,\ t\in T$ given by (\ref{khodaya}) as a topological space is a torus and hence $H_1(E_t,\Z)$ is a 
free rank two $\Z$-module. Smooth variations of $t$, gives us the  monodromy representation 
$$
\pi_1(T,b)\to {\rm Iso}(H_1(E_b,\Z)),
$$
where $b$ is a fixed point in $T$. We would like to calculate the image of the monodromy representation in a 
fixed basis of   $\delta_1,\delta_2$ of $H_1(E_b,\Z)$.
A classical way for choosing such a basis is given by
the Picard-Lefschetz theory. 
Fix the parameters $t_1$ and $t_2\not=0$ and let $t_3$ varies.
  Exactly for two values 
$\tilde t_3, \ \check t_3=\pm \sqrt{\frac{t_2^3}{27}}$ of $t_3$, the curve $E_t$ is singular. 
In $E_b-\{\infty\}$ we can take two cycles $\delta_1$ and $\delta_2$ such that $\langle \delta_1,\delta_2\rangle=1$
and $\delta_1$ (resp. $\delta_2$) vanishes along a straight line connecting $b_3$ to $\tilde t_3$ (resp. 
$\check t_3$). The corresponding anti-clockwise monodromy around the critical value 
$\tilde t_3$ (resp $\check t_3$)  can be computed using the Picard-Lefschetz formula:
$$
\delta_1\mapsto \delta_1,\ \delta_2\mapsto \delta_2+\delta_1 \ 
(\hbox{ resp. } \delta_1\mapsto \delta_1-\delta_2,\ \delta_2\mapsto \delta_2).
$$ 
The canonical map $\pi_1(\C\backslash \{\tilde t_3,\check t_3\},t)\rightarrow 
\pi_1(T,t)$ induced by inclusion is a surjection and so the image of  $\pi_1(T,t)$ under the monodromy representation is 
$$
\SL 2\Z=\langle A_1,A_2\rangle ,\ \hbox{ where } A_1:=\mat{1}{0}{1}{1},\ A_2:=\mat{1}{-1}{0}{1}.
$$
Let us explain the above topological picture by the 
following one parameter family of elliptic curves:
$$
E_\psi: y^2-4x^3+12x-4\psi=0
$$
For $b$ a real number between $2$ and $-2$ the elliptic curve $E_b$ intersects the real plane $\R^2$ in two
connected pieces which one of them is an oval and we can take it as $\delta_2$ with the anti clockwise orientation. 
In this example as $\psi$ moves from $-2$ to $2$, $\delta_2$ is born  from the point 
$(-1,0)$ and ends up in the  $\alpha$-shaped piece which is the intersection of $E_{2}$ with $\R^2$. The cycle 
$\delta_1$ lies in the complex domain and it vanishes on the critical point $(1,0)$ as $\psi$ moves to $2$. 
It intersects each connected component of 
$E_b\cap \R^2$ once and it is oriented in such away that $\langle \delta_1,\delta_2\rangle=1$. 
 %(see Figure \ref{eight}). 
 \begin{figure}[t]
\begin{center}
\includegraphics[width=0.5\textwidth]{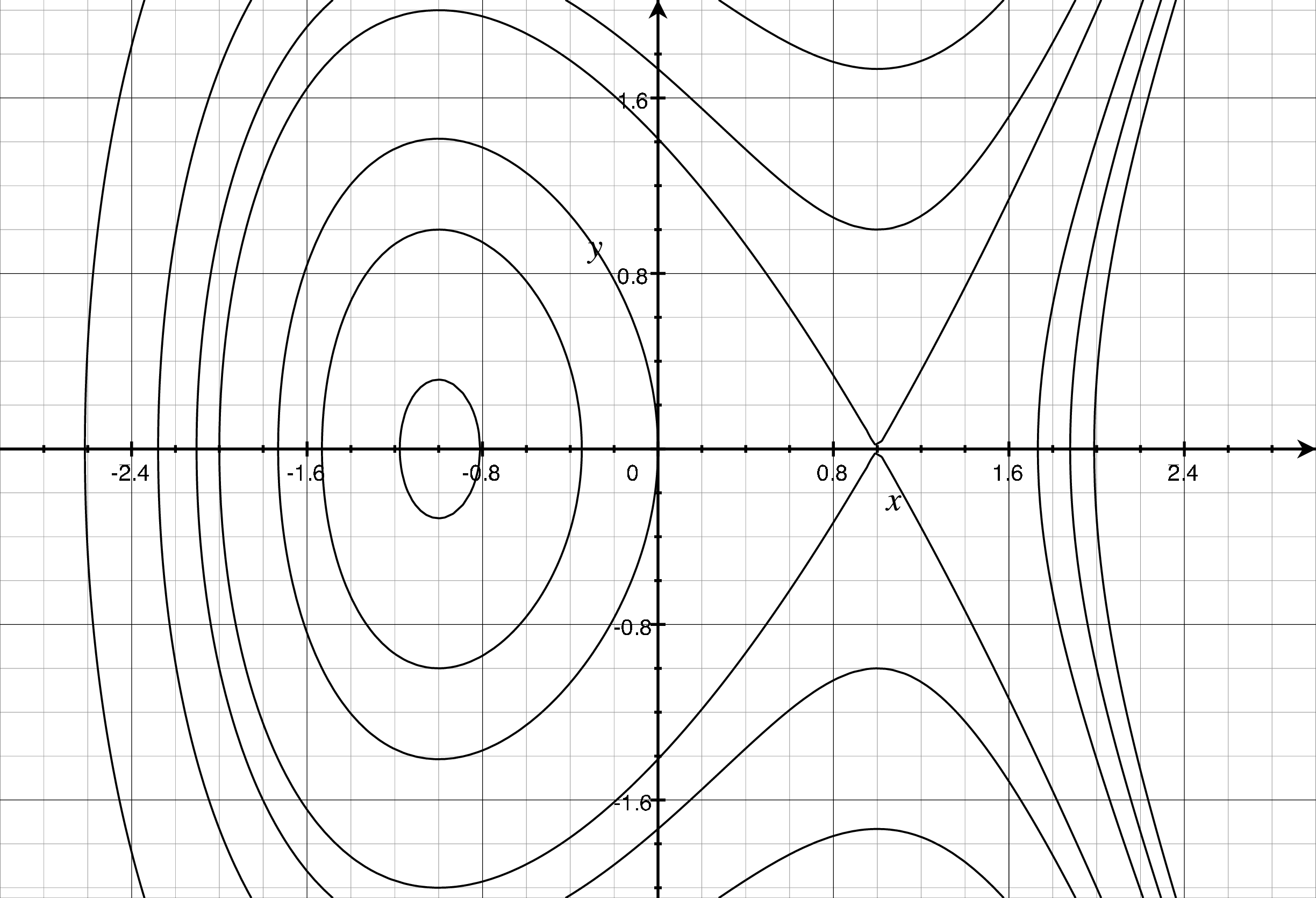}
\caption{Elliptic curves: $y^2-x^3+12x-4\psi,\ \psi=-1.9,-1,0,2,3,5,10$}
\label{eight}
\end{center}
\end{figure}

%Note that  if we define $g_1:=A_1A_2A_2=\mat{0}{-1}{1}{0},\ g_2:=A_1^{-1}A_2^{-1}=\mat{1}{1}{-1}{0}$ 
%then we have $\SL 2\Z=\langle g_1,g_2\mid g_1^2=g_2^3=-I\rangle$, where $I$ is the identity $2\times 2$ matrix.
\begin{exe}\rm
Discuss the details of the content of the present section. If we fix $t_1$ and $t_3$ and let $t_2$ vary then we get three critical curves. Describe the monodromy around each fiber.
\end{exe}

%------------------------------------------------------------
\subsection{Hodge structure of elliptic curves}
\label{hodge}
Let $E$ be an elliptic curves over $\C$. We can regard $E$ as a complex manifold and so we have de Rham cohomologies of
$H_\dR^i(E),\ i=0,1,2$ defined using $C^\infty$ and $\C$-valued differential forms. In the $C^\infty$ context we have also the 
wedge product $H_\dR^1(E)\times H_\dR^1(E)\to H_\dR^2(E)$ bilinear map and the isomorphism 
$$
H_\dR^2(E)\cong \C,\ 
\omega \mapsto \frac{1}{2\pi i}\int_E\omega.
$$ 
The $2\pi i$ factor is there, because in this way the above isomorphism is the complexification of an isomorphism of $\Z$-modules 
$H^2(E,\Z)\cong\Z$. 
The translation of all these in the algebraic context is done in \S \ref{derhamsection} and we 
leave it to the reader the details of comparison of algebraic and $C^\infty$ contexts. For this and the details of what we are going 
to describe the reader is referred to the classical Book of Griffiths and Harris (1978) or to the two volume book of C. Voisin (2002) on Hodge theory. %\cite{gri} or \cite{vo02}.   
 
There is a one dimensional subspace $F^1\subset 
H_\dR^1(E)$ which is spanned by regular differential forms on $E$. We have the complex conjugation in $H_\dR^1(E)$ and it 
turns out that $F^1\cap \overline{ F^1}=\{0\}$ and so
$$
H_\dR^1(E)=F^1\oplus \overline {F^1},
$$
which is called the Hodge decomposition. The bilinear map $\langle \cdot,\cdot \rangle$ 
constructed in \S\ref{intersectionform} turns out to be
$$
\langle \cdot,\cdot \rangle : H_\dR^1(E)\times H_\dR^1(E)\to \C,\ (\omega,\alpha)\mapsto \frac{1}{2\pi i}\int_E \omega\wedge \alpha.
$$
It satisfies the following inequality
\begin{equation}
\label{dominique}
-\sqrt{-1}\langle \omega,\overline{\omega}\rangle>0,\ \omega\in F^1. 
\end{equation}
From all these we want to conclude two well-known facts about elliptic integrals. 
\begin{prop}
\label{aumentodecarga}
Let $E$ be an elliptic curve in the Weierstrass form and let $\delta_1,\delta_2\in H_1(E,\Z)$ with 
$\langle \delta_1,\delta_2\rangle=1$. We have
\begin{enumerate}
 \item 
The integral $\int_{\delta}\frac{dx}{y},\ \delta\in H_1(E,\Z)$ never vanishes and 
\begin{equation}
 \label{parham1}
\Im \left(\frac{\int_{\delta_1}\frac{dx}{y}}{\int_{\delta_2}\frac{dx}{y}}\right )>0.
\end{equation}
\item
We have 
\begin{equation}
\label{parham2}
\int_{\delta_1}\frac{dx}{y}\int_{\delta_2}\frac{xdx}{y}-\int_{\delta_2}\frac{dx}{y}\int_{\delta_1}\frac{xdx}{y}=2\pi i.  
\end{equation}
\end{enumerate}
  \end{prop}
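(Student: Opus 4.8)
The plan is to derive both statements from a single classical fact — the Riemann bilinear relation — connecting the intersection pairing $\langle\cdot,\cdot\rangle$ of \S\ref{hodge}, written there as $\frac{1}{2\pi i}\int_E\omega\wedge\alpha$, with period integrals. Concretely, for any $\omega,\alpha\in H^1_\dR(E)$ and a basis $\delta_1,\delta_2$ of $H_1(E,\Z)$ with $\langle\delta_1,\delta_2\rangle=1$, one has
$$
\langle\omega,\alpha\rangle=\frac{1}{2\pi i}\left(\int_{\delta_1}\omega\int_{\delta_2}\alpha-\int_{\delta_2}\omega\int_{\delta_1}\alpha\right).
$$
This is the translation of the cup product of \S\ref{intersectionform} into periods; I would either invoke it from the classical references cited in \S\ref{hodge}, or prove it directly by cutting $E$ along the edges of a fundamental polygon and applying Stokes to a primitive of $\alpha$. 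Granting this identity, the second part is immediate: take $\omega=\frac{dx}{y}$ and $\alpha=\frac{xdx}{y}$, recall $\langle \frac{dx}{y},\frac{xdx}{y}\rangle=1$ from (\ref{08.05.2010}), and multiply through by $2\pi i$ to obtain (\ref{parham2}).

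For the first part I would specialize the identity to $\alpha=\overline{\omega}$, with $\omega=\frac{dx}{y}\in F^1$. Since the $\delta_j$ are real (integral) cycles, $\int_{\delta_j}\overline{\omega}=\overline{\int_{\delta_j}\omega}$; writing $p_j:=\int_{\delta_j}\omega$ this gives
$$
\langle\omega,\overline{\omega}\rangle=\frac{1}{2\pi i}\left(p_1\overline{p_2}-p_2\overline{p_1}\right)=\frac{1}{\pi}\,\Im(p_1\overline{p_2}).
$$
The non-vanishing assertion then follows by contradiction: if $\int_\delta\frac{dx}{y}=0$ for some nonzero $\delta$, I may assume $\delta$ primitive, complete it to a basis with $\langle\delta,\delta'\rangle=1$, and the displayed formula forces $\langle\omega,\overline{\omega}\rangle=0$, contradicting the polarization inequality (\ref{dominique}), which in particular gives $\langle\omega,\overline{\omega}\rangle\neq0$. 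For (\ref{parham1}), since $\Im(p_1/p_2)=\Im(p_1\overline{p_2})/|p_2|^2$ has the same sign as $\Im(p_1\overline{p_2})$, and the latter is a fixed nonzero real multiple of $\langle\omega,\overline{\omega}\rangle$, the sign of the ratio is pinned down by (\ref{dominique}); the orientation and symplectic convention encoded in $\langle\delta_1,\delta_2\rangle=1$ is precisely what renders it positive.

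The genuinely substantive step is the Riemann bilinear relation itself, that is, matching the \v{C}ech/algebraic cup product of \S\ref{intersectionform} with the period bilinear form; everything else is bookkeeping with complex conjugation together with a careful tracking of orientation conventions. The one place demanding real care is the sign in (\ref{parham1}): it is sensitive to whether $\langle\delta_1,\delta_2\rangle=1$ is read as intersection number $+1$ and to the chosen orientation of $E$, so I would fix these conventions once at the outset — for instance by testing them on $E=\C/(\Z+\Z\tau)$ with $\omega=dz$ and $\Im(\tau)>0$ — and then let (\ref{dominique}) deliver the inequality in the remaining cases by continuity of the period ratio over the connected parameter space.
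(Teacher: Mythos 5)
Your proposal is correct and follows essentially the same route as the paper: the paper likewise deduces both parts from the Riemann bilinear relation, which it obtains by expanding $\frac{dx}{y}$ and $\frac{xdx}{y}$ in the Poincar\'e-dual basis $\check\delta_1,\check\delta_2$ and using bilinearity together with $\langle\check\delta_1,\check\delta_2\rangle=1$, then gets (\ref{parham1}) from the polarization inequality (\ref{dominique}) and (\ref{parham2}) from the algebraically computed value $\langle\frac{dx}{y},\frac{xdx}{y}\rangle=1$ of (\ref{08.05.2010}). The only cosmetic difference is how the bilinear relation itself is justified --- you cite it or cut along a fundamental polygon and apply Stokes, while the paper derives it in one line from the dual-basis expansion --- and your write-up in fact spells out the non-vanishing and sign analysis in part 1 more explicitly than the paper does.
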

\begin{proof}
The first part follows, for instance, from \cite{si86}, Proposition 5.2.
The second part is known as Legendre relations between elliptic integrals, see for instance \cite{si86}, Exercise 6.4 d. 
We give another proof of all these based on de Rham cohomology arguments presented in this section.

Let $\check \delta_i\in H^1(E_t,\Z), \ i=1,2$ be the Poincar\'e dual of $\delta_i$, that is, $\int_\delta \check \delta_i=
\langle \delta,\delta_i\rangle$ for all $\delta\in H_1(E,\Z)$. The bilinear map $\langle\cdot,\cdot\rangle$ in cohomology is 
dual to the intersection
linear map in cohomology and so $\langle \check\delta_1,\check\delta_2\rangle=1$.  
In the de Rham cohomology $H_\dR^1(E)$ we have
$$
\omega=-(\int_{\delta_2}\omega)\check\delta_1+(\int_{\delta_1}\omega)\check\delta_2, \ \omega=\frac{dx}{y},\ \frac{xdx}{y}.
$$
We use this for $\omega=\frac{dx}{y}$ and we see that the inequality (\ref{dominique}) is equivalent to the 
first part of the proposition. The second 
part follows from
\begin{eqnarray*}
2\pi i &=& 2\pi i\langle \frac{dx}{y},\frac{xdx}{y}\rangle \\
&=& \langle
-(\int_{\delta_2}\frac{dx}{y})\check\delta_1+(\int_{\delta_1}\frac{dx}{y})\check\delta_2, 
-(\int_{\delta_2}\frac{xdx}{y})\check\delta_1+(\int_{\delta_1}\frac{xdx}{y})\check\delta_2\rangle
\\
&=&
(\int_{\delta_1}\frac{dx}{y}\int_{\delta_2}\frac{xdx}{y}-\int_{\delta_2}\frac{dx}{y}\int_{\delta_1}\frac{xdx}{y})
\end{eqnarray*}

\end{proof}

 \begin{exe}\rm
Discuss in more details the material of the present section.
  
 \end{exe}

%-------------------------------------------------------------

\subsection{Period map}
\label{permap}
Recall the notations of \S \ref{qmfsection} for the base field 
$\k=\C$. Recall also that that for $\Gamma=\SL 2\Z$ we have
$$
T:=T_\Gamma= \{(t_1,t_2,t_3)\in \C^3\mid 27t_3^2-t_2^3\not =0\}. 
$$
If $\Gamma$ is one of $\Gamma(N),\Gamma_1(N),\Gamma_0(N)$ then we know that 
the projection map $\beta: T_\Gamma\to T$ (neglecting the torsion point structure) is a 
covering of degree $\Gamma\backslash \SL 2\Z$ (see Exercise \ref{14out2010}) and so $T_\Gamma$ 
has a natural structure of a complex manifold. We define
$\Ra_\Gamma$ to be the pull-back of the Ramanujan vector field in $T_\Gamma$.

Let us fix $b\in T_\Gamma$ and  a basis $\delta_1^0,\delta_2^0$ of the $\Z$-module 
$H_1(E_{\beta(b)},\Z)$ with $\langle \delta_1^0,\delta_2^0 \rangle=1$. For any path $\gamma$ 
which connects $b$ to an arbitrary point $t\in T_\Gamma$ we define $\delta_1,\delta_2\in H_1(E_t,\Z)$ to be the
monodromy of $\delta_1^0$ and $\delta_2^0$ along the path $\gamma$.
The period map is defined by 
$$
\per: T_\Gamma\rightarrow \Gamma\backslash \pedo,\ t\mapsto
\left [\frac{1}{\sqrt{2\pi i}}\mat
{\int_{\delta_1} \frac{dx}{y}}
{\int_{\delta_1}\frac{xdx}{y} }
{\int_{\delta_2} \frac{dx}{y}}
{\int_{\delta_2}\frac{xdx}{y} } \right ].
$$
Brackets $[\cdot]$ means the equivalence class in the quotient $\Gamma\backslash \pedo$. 
It is well-defined because of Proposition \ref{aumentodecarga} and the following fact:
different choices of the path $\gamma$ lead to the action of $\Gamma$ from the left on $\pedo$ which is 
already absorbed in the quotient $\Gamma\backslash \pedo$.  Different choices of $b$ and $\delta_1^0,\delta_2^0$ lead to 
the composition of the period map with canonical automorphisms of $\Gamma\backslash\pedo$ (see Exercise \ref{14out2010}, \ref{pmwell}).
\begin{prop}
\label{14oct2010}
We have
\begin{enumerate}
 \item 
The period map is a local biholomorphism; 
\item
It satisfies
\begin{equation}
\label{gavril}
\per(t\bullet g)=\per(t)\cdot g,\ t\in T_\Gamma,\ g\in G;
\end{equation}
\item
The push forward of the vector field $-\Ra_\Gamma$ by the period map $\per$ is the vector field $X$ in (\ref{2.09.08}).
\end{enumerate}
%\begin{equation}
%\label{colombia}
%d\per(t)(\Ra_\Gamma)=X
%\end{equation}
%where $X$ is the vector field given by (\ref{2.09.08}) in the quotient 
%$\Gamma\backslash\pedo$. 
\end{prop}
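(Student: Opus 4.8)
The plan is to prove the three assertions in the order (2), (3), (1), because equivariance and the vector-field identity together force the local biholomorphism. First note that $T_\Gamma$, $\pedo$ and $\Gamma\backslash\pedo$ are all complex manifolds of dimension $3$ (the equation $x_1x_4-x_2x_3=1$ cuts $\pedo$ out of $\C^4$, and $\Gamma$ acts discretely), and that the entries of $\per$ are holomorphic in $t$: over a small ball the family is topologically trivial, the cycles $\delta_1,\delta_2$ are transported continuously, and the elliptic integrals $\int_{\delta_i}\frac{x^{j}dx}{y}$ depend holomorphically on the parameters. Legendre's relation (\ref{parham2}) and the inequality (\ref{parham1}) guarantee, as already observed, that $\per$ takes values in $\pedo$.

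For part (2) I would argue intrinsically. By definition the $G$-action fixes the curve $E$ and the torsion structure and alters only the de Rham marking, $(E,*,\omega)\bullet g=(E,*,k'\omega_1+k^{-1}\omega)$; hence $H_1(E,\Z)$, and with it the transported cycles $\delta_1,\delta_2$, do not move. Writing $\omega_1=\frac{dx}{y}$, $\omega_2=\omega=\frac{xdx}{y}$, the new marked form is $\tilde\omega=k'\omega_1+k^{-1}\omega_2$, and its associated regular form $\tilde\omega_1\in F^1=\C\omega_1$ is pinned down by $\langle\tilde\omega_1,\tilde\omega\rangle=1$; using $\langle\omega_1,\omega_1\rangle=0$ and $\langle\omega_1,\omega_2\rangle=1$ from (\ref{08.05.2010}) one gets $\tilde\omega_1=k\omega_1$. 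Thus the distinguished basis satisfies the row identity $(\tilde\omega_1,\tilde\omega)=(\omega_1,\omega_2)\,g$, and by $\C$-linearity of each $\int_{\delta_i}$ the period matrix is right-multiplied by $g$, i.e. $\per(t\bullet g)=\per(t)\cdot g$.

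For part (3) I would differentiate the periods by the Gauss--Manin identity (\ref{14.10.10}), $v\!\left(\int_{\delta}\alpha\right)=\int_{\delta}\nabla_v\alpha$, together with the defining property (\ref{niteroi}) of the Ramanujan field, $\nabla_\Ra\omega_1=-\omega_2$ and $\nabla_\Ra\omega_2=0$. Applying $-\Ra$ to the coordinate functions yields $(-\Ra)(x_1)=x_2$, $(-\Ra)(x_3)=x_4$ and $(-\Ra)(x_2)=(-\Ra)(x_4)=0$, which is precisely the vector field $X=x_2\frac{\partial}{\partial x_1}+x_4\frac{\partial}{\partial x_3}$ of (\ref{2.09.08}). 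Equivalently, $X$ is infinitesimal right multiplication by $N:=\mat{0}{0}{1}{0}$, and the computation reads $\per_*(-\Ra_\Gamma)=X$.

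Finally, part (1) follows from (2) and (3). The Lie algebra of $G$ is spanned by $H:=\mat{1}{0}{0}{-1}$ and $E:=\mat{0}{1}{0}{0}$, so differentiating (\ref{gavril}) shows the image of $d\per_t$ contains the right translations by $H$ and $E$; by part (3) it also contains the right translation by $N=\mat{0}{0}{1}{0}$. Since $H,E,N$ span $\mathfrak{sl}_2(\C)$ and $x\mapsto x\cdot C$ for traceless $C$ sweeps out all of the $3$-dimensional tangent space $T_{\per(t)}\pedo$ (the condition $\det=1$ is preserved exactly when $C$ is traceless), $d\per_t$ is surjective, hence an isomorphism by equality of dimensions. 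Alternatively one computes the Jacobian directly from the Gauss--Manin matrix $A$ of Proposition \ref{18.1.06}: the three matrices $A(\frac{\partial}{\partial t_1}),A(\frac{\partial}{\partial t_2}),A(\frac{\partial}{\partial t_3})\in\mathfrak{sl}_2(\C)$ have, in the basis $E,H,N$, determinant $\frac{-3}{4\Delta}\neq0$. The main obstacle is not any single computation but the careful bookkeeping of conventions---which slot of the period matrix is a cycle and which a form, and the consequent left/right and transpose placement---and making rigorous the holomorphic transport of the $\delta_i$ that underlies the very definition of $\per$.
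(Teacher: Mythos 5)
Your proposal is correct, and in parts (2) and (3) it runs along the paper's own lines: the paper obtains (\ref{gavril}) from the explicit isomorphism $(x,y)\mapsto (k^2x-k'k,\,k^{3}y)$ in the proof of Proposition \ref{actiont}, which is precisely your intrinsic computation $\tilde\omega_1=k\omega_1$, $(\tilde\omega_1,\tilde\omega)=(\omega_1,\omega)\cdot g$ made concrete in Weierstrass coordinates (your use of $\langle\omega_1,\omega\rangle=1$ to pin down $\tilde\omega_1$ is a clean substitute for that coordinate change); and part (3) is the same Gauss--Manin computation, $d\per(\Ra)=\per(t)\cdot A^\tr(\Ra)=\per(t)\mat{0}{0}{-1}{0}$. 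Where you genuinely diverge is part (1). The paper proves local biholomorphy by a direct Jacobian computation in the chart $(x_1,x_3,x_2)$ of $\pedo$, finding $dx_1\wedge dx_3\wedge dx_2=\frac{3x_1}{4\Delta}\,dt_1\wedge dt_2\wedge dt_3$ and then invoking the nonvanishing of $x_1$, i.e.\ (\ref{parham1}) via Proposition \ref{aumentodecarga}; it also first reduces to $\Gamma=\SL 2\Z$. You instead deduce surjectivity of $d\per_t$ from equivariance: differentiating (\ref{gavril}) along one-parameter subgroups of $G$ puts $\per(t)\cdot H$ and $\per(t)\cdot E$ in the image, part (3) adds $\per(t)\cdot N$, and since $H,E,N$ span $\mathfrak{sl}_2(\C)$ while $T_x\pedo=x\cdot\mathfrak{sl}_2(\C)$, a dimension count finishes. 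This buys two things: you only need invertibility of $\per(t)$, i.e.\ Legendre's relation (\ref{parham2}), not the chart nor $x_1\neq 0$ from (\ref{parham1}); and the argument works verbatim on $T_\Gamma$, so the reduction to the full modular group becomes superfluous. What the paper's route buys is the explicit Jacobian; your fallback version of it is sound --- indeed $A(\frac{\partial}{\partial t_1})=N$, and the $E$- and diagonal components of $A(\frac{\partial}{\partial t_2}),A(\frac{\partial}{\partial t_3})$ give $a_2b_3-a_3b_2=\frac{1}{\Delta^2}\bigl(\frac{3}{4}t_2^3-\frac{81}{4}t_3^2\bigr)=-\frac{3}{4\Delta}$, matching your claimed determinant --- and it is just the paper's wedge computation recast in the $\mathfrak{sl}_2$ basis, with the invertibility of $\per(t)$ replacing the nonvanishing of $x_1$.
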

\begin{proof}
It is enough to prove the Proposition for $\Gamma=\SL 2\Z$ (Exercise \ref{14out2010}, \ref{sl2z}). 
The equality (\ref{gavril}) follows from Proposition \ref{actiont}. The last statement follows from 
Proposition \ref{18.1.06} and (\ref{14.10.10}) as follows:
\begin{eqnarray*}
d\per(\Ra)=\per(t)\cdot A^\tr (\Ra)=\per \mat{0}{0}{-1}{0}=\mat{-x_2}{0}{-x_4}{0}.
\end{eqnarray*}
We have used the notation $\per=\mat{x_1}{x_2}{x_3}{x_4}$. %:=[\int_{\delta_i}\frac{x^{j-1}dx}{y}]_{i,j=1,2}$. 
Using the equality (\ref{18.10.10}), (\ref{parham2}) we have:
\begin{eqnarray*}
dx_{1}\wedge dx_{3}\wedge dx_{2}& =& A_{11}\wedge A_{12}\wedge(x_{1}A_{3}+x_{2}A_{22})\\
&=&  \frac{1}{\Delta^3}(-\frac{1}{12}d\Delta)\wedge (\frac{3}{2}\alpha)\wedge (x_{1}\Delta dt_1)\\
&=& \frac{3x_{1}}{4\Delta} dt_1\wedge dt_2\wedge dt_3,
\end{eqnarray*}
where $A=[A_{ij}]$ is the Gauss-Manin connection in the basis in Proposition \ref{18.1.06}. 
Using (\ref{parham1}) we conclude that $\per$ is a local biholomorphism.
\end{proof}

\begin{exe}\rm
\label{14out2010}
\begin{enumerate}
\item
For $\Gamma=\Gamma_0(N),\ \Gamma_1(N),\ \Gamma(N)$ show that the cardinality of $\Gamma\backslash \SL 2\Z$ is the number of 
enhanced elliptic curves for $\Gamma$ with $(E,\omega)$ fixed.
\item
\label{pmwell}
Show that the period map $\per$ is well-defined.
\item
For $A\in \Gamma \backslash\SL 2\Z$ we have the well-defined map 
$F_A: \Gamma\backslash \pedo \to \Gamma\backslash \pedo,\ x\mapsto Ax$. A different
choice of $\delta_1^0,\delta^0_2$ in the definition of the period map leads to the composition 
$\per\circ F_A$.
\item 
\label{sl2z}
Proposition \ref{14oct2010} for $\Gamma=\SL 2\Z$ implies the same proposition for arbitrary $\Gamma$.
\end{enumerate}
\end{exe}
%%%%%%%%%%%%%%%%%%%%%%%%%%%%%%%%%%%%%%%%%%%%%%%%%%%%%%%%%%%%%%%%%%%%%%%%%%%%%%%%%%%%%%%%%%%%%%%%%%%%%%%%%%%%%55
\subsection{Inverse of the period map}
\label{inversesection}
In this section we consider the case $\Gamma=\SL 2\Z$. 
Let 
$$
g=(-g_1,g_2,-g_3):\uhp \rightarrow T
$$
be the composition $\uhp\to \SL 2\Z\backslash \pedo\stackrel{\per^{-1}}{\to} T$. We have inserted mines sign because at the end of this
section it will turn out that $g_i$'s are given by (\ref{17nov2010}). 
%We have not yet proved that 
%the period map is a global biholomorphism. 
Here, $\per^{-1}$ is the 
local inverse of the period map, however, since $\uhp$ is simply connected, $g$ is a well-defined one valued 
holomorphic function on $\uhp$. 
For a moment, we assume that the period map is a global
biholomorphism. 
From Proposition \ref{14oct2010} part 2 it follows that $g_i$'s satisfy
\begin{equation}
\label{27.10.10}
(cz+d)^{-2i}g_i(\frac{az+b}{cz+d})=g_i(z),\ i=2,3, 
 \end{equation}
$$
(cz+d)^{-2}g_1(\frac{az+b}{cz+d})=g_1(z)+c(cz+d)^{-1}, \ z\in \uhp, \ \mat{a}{b}{c}{d}\in \SL 2\Z.
$$
 From Proposition \ref{14oct2010} part 3 it follows also that $g$ is a solution of the vector field $\Ra$, that is, 
\begin{equation}
\label{ramanujan} \diff{g_1}=g_1^2- \frac{1}{12}g_2,\
\diff{g_2}=4g_1g_2- 6g_3,\ \diff{g_3}=6g_1g_3- \frac{1}{3}g_2^2
\end{equation}
Since $\mat{1}{1}{0}{1}\in\SL 2\Z$, the functions $g_i$ are invariant under $z\mapsto z+1$, and so,
they can be written in terms of the new variable $q=e^{2\pi i z}$. Later, we will prove that
$g_i$'s have a finite growth at infinity and hence  as functions in $q$ are holomorphic at $q=0$.

%%%%%%%%%%%%%%%%%%%%%%%%%%%%%%%%%%%%%%%%%%%%%%%%%%%%%%%%%%%%%%%%%%%%%%%%%%%%%%%%%%%%%%%%%%%%%%%%%%%%%%%%%%%%%%
\subsection{Hypergeometric functions}
\label{hypergeometricsection}
Let us consider the following one parameter family of elliptic curves
$$
E_\psi: y^2-4x^3+12x-4\psi=0
$$
and the cycles $\delta_1,\delta_2\in H_1(E_\psi,\Z)$ described in \S\ref{monodromysection}: 
%  \begin{figure}[t]
% \begin{center}
% %\includegraphics[width=0.4\textwidth]{figures/weierstrass.pdf}
% %\caption{Lamniscate: $(x^2+y^2)^2=x^2-y^2+t,\ t=-0.2,-0.1,0,0.1,0.3,1$}
% \includegraphics[width=0.5\textwidth]{figures/weierstrassfamily.pdf}
% \caption{Elliptic curves: $y^2-x^3+3x-t,\ t=-1.9,-1,0,2,3,5,10$}
% \label{eight}
% \end{center}
% \end{figure}
for $\psi$ a real number between $-2$ and $2$, $\delta_2$ is the oval vanishing in $(-1,0)$
and $\delta_1\in H_1(E_\psi,\Z)$ vanishes on the nodal point $(1,0)$. Whenever we need to emphasize that $\delta_i, i=1,2$ 
depends on $\psi$ we write $\delta_i=\delta_i(\psi)$.
The cycles $\delta_i,\ i=1,2$ form a basis of $H_1(E_\psi,\Z)$ and it follows from Proposition \ref{18.1.06} and 
the equality (\ref{18.10.10}) that 
the matrix 
$$
Y=\mat{\int_{\delta_1}\frac{dx}{y}}  {\int_{\delta_2}\frac{dx}{y}}{\int_{\delta_1}\frac{xdx}{y}}{\int_{\delta_2}\frac{xdx}{y}}
$$ 
forms a fundamental system of the linear differential equation:
\begin{equation}
\label{No1}
Y'=\frac{1}{\psi^2-4}\mat{\frac{-1}{6}\psi}{\frac{1}{3}}{\frac{-1}{3}}{\frac{1}{6}\psi}Y,
\end{equation}
that is, any solution of (\ref{No1}) is a linear combination of the columns of $Y$.
This example shows a little bit  historical aspects of the Gauss-Manin connection.
From (\ref{No1}) it follows that the elliptic integral $\int_{\delta_2}\frac{dx}{y}$ (resp.   $\int_{\delta_2}\frac{xdx}{y}$) 
satisfies the  differential equation
\begin{equation}
\label{21sep2006}
\frac{5}{36}I+2\psi I'+(\psi^2-4)I''=0 \quad\quad (\text{ resp. }
\frac{ -7}{36}I+2\psi I'+(\psi^2-4)I''=0)
\end{equation}
which is called a Picard-Fuchs equation. We make a linear transformation 
$$
\tau=\frac{\psi+2}{4}
$$
which sends the singularities $\psi=-2,2$ of (\ref{No1}) to $\tau=0,1$. 
We write (\ref{No1}) in the variable $\tau$.  
The integrals $\int_{\delta_2}\frac{dx}{y}$ and $\int_{\delta_2}\frac{xdx}{y}$ are holomorphic around 
$\tau=0$. We write $X:=[\int_{\delta_2}\frac{dx}{y}, \int_{\delta_2}\frac{xdx}{y}]^\tr$ as a formal power series
in $\tau$  $X=\sum_{i=0}^\infty Y_i \tau^i$, substitute it in (\ref{No1}) and obtain a recursive formula for $Y_i$'s. We also obtain 
$Y_0=[a_0,-a_0]^\tr$, where $a_0$ is the value of $\int_{\delta_{2}}\frac{dx}{y}$ at $\psi=-2$. This must be calculated 
separately. The intersection of the elliptic curve $E_\psi, -2<\psi<2$ with the real plane $\R^2$ has two connected component, 
one of them is $\delta_2$ and the other $\tilde\delta_2$ is a closed path in $E_\psi$ which crosses the point at infinity $[0;1;0]$.
It turns out that if we give the clockwise orientation to $\tilde\delta_2$ then it is homotop to $\delta_2$ in $E_\psi$ and
$$
a_0=\left. \int_{\tilde \delta_2} \frac{dx}{y}\right|_{\psi=-2}=
2\int_{2}^\infty \frac{dx}{2(x+1)\sqrt{x-2}}=
\left. \frac{2 {\rm tang}^{-1}(\frac{\sqrt{x-2}}{\sqrt{3}})}{\sqrt{3}}\right|_2^\infty=
\frac{\pi}{\sqrt{3}}.
$$
Note that for $\psi$ a real number  near $-2$, by Stokes formula we have $\int_{\delta_2} \frac{dx}{y}=
\int_{\Delta_2}\frac{dx\wedge dy}{y^2}>0$, where $\Delta_2$ is the region in $\R^2$ bounded by $\delta_2$, and so we already 
knew that $a_0\geq 0$. This explain the fact that why $\delta_2$ is 
homotop to clockwise oriented $\tilde\delta_2$.
The result of all these calculations is:
\begin{equation}
\label{25oct2010}
\int_{\delta_2}\frac{dx}{y}=\frac{\pi}{\sqrt{3}}F(\frac{1}{6},\frac{5}{6},1|\frac{\psi+2}{4}),    
\end{equation}
$$
\int_{\delta_2}\frac{xdx}{y}= -\frac{\pi}{\sqrt{3}} F(\frac{-1}{6},\frac{7}{6},1|\frac{\psi+2}{4}),
$$
where
\begin{equation}
 \label{gauss}
F(a,b,c|z)=\sum_{n=0}^{\infty} \frac{(a)_n(b)_n}{(c)_n n!}z^n, \ c\not\in
\{0,-1,-2,-3,\ldots \},
\end{equation}
is the Gauss hypergeometric function and $(a)_n:=a(a+1)(a+2)\cdots (a+n-1)$.

Let us now calculate the integrals $\int_{\delta_1}\frac{x^idx}{y},\ i=0,1$. We have the isomorphism
 $E_{-\psi}\to E_{\psi},\ (x,y)\mapsto (-x,iy)$ which sends the cycle $\delta_2(-\psi)$ to 
$\delta_1(\psi)$ and $\delta_1(-\psi)$ to $-\delta_2(\psi)$.
This gives us the equalities:
$$
\int_{\delta_1(\psi)}\frac{x^jdx}{y}=(-1)^{j}i \int_{\delta_2(-\psi)}\frac{x^jdx}{y}
$$
Finally, we have calculated all the entries of $Y$:
$$
Y=
\mat
{\frac{\pi i}{\sqrt{3}}F(\frac{1}{6},\frac{5}{6},1|\frac{-\psi+2}{4})}
{\frac{\pi}{\sqrt{3}}F(\frac{1}{6},\frac{5}{6},1|\frac{\psi+2}{4})}
{\frac{\pi i}{\sqrt{3}} F(\frac{-1}{6},\frac{7}{6},1|\frac{-\psi+2}{4})}
{-\frac{\pi}{\sqrt{3}} F(\frac{-1}{6},\frac{7}{6},1|\frac{\psi+2}{4})}
$$
The monodromy around $\tau=0$ leaves $\delta_2$ invariant and takes $\delta_1$ to $\delta_1+\delta_2$. 
From this it follows that for a fixed complex number $a$:
\begin{equation}
 \label{aruq}
\int_{\delta_1}\frac{dx}{y}=\frac{\ln (a \tau)}{2\pi i}(\int_{\delta_2}\frac{dx}{y})+
\frac{1}{2i\sqrt{3}}f(\tau)=
\frac{1}{2i\sqrt{3}}(F(\frac{1}{6},\frac{5}{6},1|\tau)\ln (a\tau)+f(\tau))
\end{equation}
where $f$ is a one valued function in a neighborhood of $\tau=0$. From Exercises \ref{30oct2010}, \ref{creche} 
it follows that $f$ is holomorphic at 
$\tau=0$. We choose $a$ in such a way that the value of $f$ at $\tau=0$ is $0$. This is equivalent to the following formula for 
$a$:
$$
a=\exp(2\pi i (\lim_{\tau\to 0} \int_{\delta_1}\frac{dx}{y}-\frac{\ln \tau}{2\pi i} \int_{\delta_2}\frac{dx}{y})).
$$
According to Exercise \ref{30oct2010}, \ref{creche} we have 
$$
a=\frac{1}{432}.
$$

 We write $f=\sum_{i=1}^\infty f_n\tau^ n$ and substitute (\ref{aruq}) in the Picard-Fuchs 
equation (\ref{25oct2010}) and we obtain the following recursion for $f_n$'s:
$$
f_{n+1}=\frac{(n-\frac{1}{6})(n-\frac{5}{6})}{(n+1)^2}f_n+\frac{(\frac{1}{6})_{n}(\frac{5}{6})_{n}}{ (n!)^2}\frac{2n+1}{(n+1)^2}-
\frac{2}{n+1}\frac{(\frac{1}{6})_{n+1}(\frac{5}{6})_{n+1}}{ ((n+1)!)^2},\ f_0=0. 
$$
We will need the value $f_1=\frac{13}{18}$.

%We first calculate this one valued function for $\omega=ydx$. Note that
%$$
%\nabla_{\frac{\partial}{\partial \psi}}ydx=2\frac{dx}{y}
%$$
%In a down to earth language, this is equivalent to say that 
%$\frac{\partial}{\partial \psi}\int_{\delta} ydx= \int_{\delta}2\frac{dx}{y}$ for all
%$\delta\in H_1(E_\psi,\Z)$. In particular,
%$$
%\int_{\delta_2}ydx=\frac{\pi}{\sqrt{3}}(\sum_{n=0}^\infty\frac{(\frac{1}{6})_n(\frac{5}{6})_n}{ (n!)^2(n+1)}z^{n+1} )
%$$
%The function $f(\tau)$ is holomorphic  at $\tau=0$ and its value in $\tau=0$ is:
%\begin{eqnarray*}
%\int_{\delta_1(-2)}ydx &=&-i \int_{\delta_2(2)}ydx=-i(-4)\int_{-2}^1(x-1)\sqrt{x+2}dx\\
%&=&
%\left. \frac{4i}{5} \sqrt{x+2}(2x^2-2x-12)\right|_{-2}^ 1=
%\frac{-48 i\sqrt{3}}{5}
%\end{eqnarray*}
%The periods $\int_{\delta_1}ydx, \ \int_{\delta_2}ydx$ satify the Picard-Fuchs equation:
%$$
%(\tau(\tau-1))I''+\frac{36}{5}I=0
%$$
% We write $\int_{\delta_1}ydx=\frac{\ln (\tau)}{2\pi i}(\int_{\delta_2}ydx)+\frac{1}{2i\sqrt{3}}
% (\sum_{n=0}^\infty f_n\tau^n)$, substitute in the above differential equation and obtain the following recusrsion for $f_n$'s.
% $$
% f_{n+1}=\frac{(n-\frac{1}{6})(n-\frac{5}{6})}{n(n+1)}f_n-a_n-a_{n+1}, \ 
% a_n:= \frac{(\frac{1}{6})_{n-1}(\frac{5}{6})_{n-1}}{ ((n-1)!)^2n} 
% \frac{2n-1}{n(n+1)}, \ a_0:=0,\ f_0:=\frac{288}{5}
% $$
% From all these we will need the following:
% $$
% \int_{\delta_1}\frac{dx}{y}=\frac{1}{4i\sqrt{3}}( (1+\frac{5}{36}\tau+\cdots)\ln (\tau)+\frac{9}{2}+\cdots)
% $$

\begin{exe}\rm
\label{30oct2010}
\begin{enumerate}
 \item 
Deduce (\ref{21sep2006}) from (\ref{No1}). 
\item
The integrals $\int_{\delta_2}\frac{dx}{y}$ and $\int_{\delta_2}\frac{xdx}{y}$ are holomorphic at 
$\tau=0$.
\item
Do the details of the calculations which lead to the equalities (\ref{25oct2010}). 
\item
\label{creche}
Prove
$$
\lim_{\tau \to 0}\  \  \frac{\pi i}{\sqrt{3}} F(\frac{1}{6}, \frac{5}{6},1|1-\tau)- \frac{\pi}{\sqrt{3}}F(\frac{1}{6}, \frac{5}{6},1|\tau)\frac{\ln\tau}{2\pi i}=
\frac{\ln (432)}{2\pi i }.
$$
\end{enumerate}

\end{exe}

%%%%%%%%%%%%%%%%%%%%%%%%%%%%%%%%%%%%%%%%%%%%%%%%%%%%%%%%%%%%%%%%%%%%%%%%%%%%%%%%%%%%%%%%%%%%%%%%%%%%%%%%%%
\subsection{Periods and Ramanujan}
\label{perram}
In this section consider the full modular group $\Gamma=\SL 2\Z$ and the corresponding period map. 
We are interested in the image $L$ of the map $g$ constructed in \S\ref{inversesection}.
This is the locus $L$ of parameters $t\in T$ such that:
\begin{equation}
\label{20oct2010}
\int_{\delta_1}\frac{xdx}{y}=-\sqrt{2\pi i},\ \int_{\delta_2}\frac{xdx}{y}=0,\ \hbox{ for some } \delta_1,\delta_2\in H_1(E_t,\Z) \hbox{ with }
\langle \delta_1,\delta_2\rangle=1.
\end{equation} 
Using Proposition \ref{14oct2010}, part 2 and  and the equality (\ref{neeb}), we know that the locus of such parameters 
is given by:
$$
I=(I_1,I_2,I_3):=(t_1,t_2,t_3)\bullet 
\mat{(\frac{1}{\sqrt{2\pi i}}\int_{\delta_2}\frac{dx}{y})^{-1}}{-\frac{1}{\sqrt{2\pi i}}\int_{\delta_2}\frac{xdx}{y}}{0}{\frac{1}{\sqrt{2\pi i}}\int_{\delta_2}\frac{dx}{y}}=
$$
$$
\left( t_1(2\pi i)^{-1} (\int_{\delta_2}\frac{dx}{y})^{2}-(2\pi i)^{-1}\int_{\delta_2}\frac{xdx}{y}\int_{\delta_2}\frac{dx}{y},\ 
t_2\cdot (2\pi i)^{-2} (\int_{\delta_2}\frac{dx}{y})^{4},\ t_3 \cdot (2\pi i)^ {-3}(\int_{\delta_2}\frac{dx}{y})^{6} \right )
$$
The mentioned locus is one dimensional and the above parametrization is by using three parameters $t_1,t_2,t_3$. 
We may restrict it to a one dimensional subspace $t=(0,12, -4\psi)$ as in \S\ref{hypergeometricsection}, 
use the formulas of elliptic integrals in terms of hypergeometric functions 
(\ref{25oct2010}) and obtain the following parametrization of $L$:
$$
I=\left ( 
-a_1 F(-\frac{1}{6},\frac{7}{6},1\mid \tau)F(\frac{1}{6},\frac{5}{6},1\mid \tau),
a_2 F(\frac{1}{6},\frac{5}{6},1\mid \tau)^{4},
-a_3(1-2\tau) F(\frac{1}{6},\frac{5}{6},1\mid \tau)^{6} \right )
$$
where
$$
(a_1,a_2,a_3)=(\frac{2\pi i}{12}, 12(\frac{2\pi i}{12})^ 2, 8(\frac{2\pi i}{12})^ 3).
$$
%$$
% \int_{\delta_2}\frac{xdx}{y}(\int_{\delta_2}\frac{dx}{y})^{-1},\ 
%12\cdot (2\pi)^4 (\int_{\delta_2}\frac{dx}{y})^{-4},\ -4\psi\cdot (2\pi)^ 6(\int_{\delta_2}\frac{dx}{y})^{-6} \right )
%$$. 
From $\nabla_{\Ra}\frac{xdx}{y}=0$ and (\ref{20oct2010}) it follows that $L$ is tangent to the vector field $\Ra$. In other words,
it is a leaf of the foliation induced by $\Ra$. Since the period map sends $\Ra$ to $-X$, and the
canonical map $\uhp\to \pedo$ sends $\frac{\partial }{\partial z}$ to $X$, we conclude that $I_i$'s can be written in 
terms of the new variable
$$
z=\frac{\int_{\delta_1}\frac{dx}{y}}{\int_{\delta_2}\frac{dx}{y}}=
i \frac{F(\frac{1}{6},\frac{5}{6},1|1-\tau)}
{F(\frac{1}{6},\frac{5}{6},1|\tau)}
$$
that is
$$
(I_1,I_2,I_3)=(-g_1(z), g_2(z),-g_3(z)),
$$
where $(g_1,g_2,g_3):\uhp\to \C^3$ is given in \S\ref{inversesection}. Let us define 
$$
E_{2i}(z)=a_i^{-1} g_i(z),\ i=1,2,3.
$$
We get the equalities:
\begin{equation}
\label{vuakala}
 F(-\frac{1}{6},\frac{7}{6},1\mid \tau)F(\frac{1}{6},\frac{5}{6},1\mid \tau)=
E_2(i \frac{F(\frac{1}{6},\frac{5}{6},1|1-\tau)}
{F(\frac{1}{6},\frac{5}{6},1|\tau)}),
\end{equation}
$$
F(\frac{1}{6},\frac{5}{6},1\mid \tau)^{4}
=E_4(i \frac{F(\frac{1}{6},\frac{5}{6},1|1-\tau)}
{F(\frac{1}{6},\frac{5}{6},1|\tau)}),
$$
$$
 (1-2\tau) F(\frac{1}{6},\frac{5}{6},1\mid \tau)^{6}
=E_6(i \frac{F(\frac{1}{6},\frac{5}{6},1|1-\tau)}
{F(\frac{1}{6},\frac{5}{6},1|\tau)}).
$$

%$$
%q=e^{2\pi i \frac{\int_{\delta_1}\frac{dx}{y}}{\int_{\delta_2}\frac{dx}{y}}}=z e^{\frac{f_0+f_1z+f_2 z^2+\cdots}{}}
%$$
%%%%%%%%%%%%%%%%%%%%%%%%%%%%%%%%%%%%%%%%%%%%%%%%%%
\subsection{Torelli problem}
\label{torelli}
In Hodge theory the global injectivity of the period map is known as global Torelli problem. As the reader may have noticed we
need only the local injectivity of the period map in order to extract quasi-modular forms from the 
inverse of the period map.
 
The period map $\per$ in \S\ref{permap} is a global biholomorphism  if and only if 
the induced map $p: T/G\rightarrow \SL 2\Z\backslash \pedo/G\cong \SL2\Z\backslash \uhp$ is a biholomorphism. 
The last 
statement follows from Weierstrass uniformization theorem. We give another proof based on a $q$-expansion argument. 
The quotient $\SL2\Z\backslash \uhp$ has a canonical structure of a Riemann surface such that  the map  $p$ is a 
local biholomorphism. Let $U$ be a subset of  $\SL 2\Z\backslash \uhp$  containing all $z$ with  $\Im(z)>1$.
The map 
$$
U\to D(0, e^{-2\pi}),\ z\mapsto q=e^{2\pi i z},
$$
where $D(0,r)$ is a disk in $\C$ with center $0$ and radius $r$, is a coordinate system around each point of 
$U$. Using this map $\bar S:=\SL 2\Z\backslash \uhp\cup \{ \infty\}$ becomes a compact Riemann surface, where the value of the above 
coordinate at $\infty$ is $q=0$. 
From another side $T/G$ admits also the canonical compactification $\overline{T/G}:=\C^3/G$ which is 
obtained by adding the single point  $p=\{\Delta=0\}/G$ to $T/G$. A coordinate system around $p$ for $\overline{T/G}$ 
is given by $(\C,0)\to \overline{T/G}, \ \tau\mapsto (0,12,-4(4\tau-2))$ 
(recall the one parameter family of elliptic curves in \S\ref{hypergeometricsection}). The map 
$p$ written in these coordinates is
\begin{equation}
 \label{dilma}
\tau\mapsto
q=e^{2\pi i \frac{\int_{\delta_1}\frac{dx}{y}}{\int_{\delta_2}\frac{dx}{y}}} =
\frac{1}{432}\tau e^{\frac{f(\tau)}{F(\frac{1}{6},\frac{5}{6},1|\tau)}}=\frac{1}{432}\tau e^\frac{\frac{13}{18}\tau+\cdots}{1+\frac{5}{36}\tau+\cdots}
\end{equation}
This is an invertible map at $\tau=0$.
%side a canonical coordinate in $T/G$ is given by the $j$-function which induces an isomorphism 
%$j:\C^3/G\to \C \cup\{\infty \}$. We have calculated the $q$-expansion of its pull-back 
%$\frac{g_2^3}{g_2^3-27g_3^2}$ by the map $p$ and we have seen that it has a pole of order one at $q=0$. 
This implies that $p$ extends to a local biholomorphism $\overline {T/G}\to \bar S$ without critical points.  
Since both the image and domain of this map  are compact 
Riemann surfaces of genus zero, we conclude that $p$ is a global biholomorphism.

%%%%%%%%%%%%%%%%%%%%%%%%%%%%%%%%%%%%%%%%%%%%%%%%%%%%%%%%5
\subsection{$q$-expansion}
The subgroup of $\SL 2\Z$ which leaves the set $\{\mat z{-1}10\mid z\in \uhp\}$ invariant is generated by 
the matrix $\mat 1101$, therefore, the variable $q$ gives us a biholomorphism between the image $\tilde \uhp$ of $\uhp$ in  
$\SL 2\Z\backslash \pedo$ and the punctured disc of radius one. In other words, $q$ is a global coordinate system on $\tilde\uhp$.
Therefore, we can write $g_i$'s of \S\ref{perram} in terms of $q$:
$$
g_i:=I_i(p^{-1}(q)).
$$
where $p:(\C,0)\to (\C,0)$ is the map given by (\ref{dilma}). It follows that $g_i$ as a function in $q$ is one valued and 
holomorphic in  the disc of radius one and center $0$. If we write $g_i$ as a formal 
Laurent series in $q$, and substitute in (\ref{vuakala}), then  we get
a recursion for the coefficients of $g_i$'s. There is a better way to calculate such formal power series. We calculate 
the first two coefficients of $g_1$ as above:
$$
E_1=1-24.q+\cdots
$$
The $g_i=a_iE_i$'s as formal power series satisfy the Ramanujan differential equation (\ref{raman}) and so according to discussion in \S\ref{ramode}, we can calculate all the coefficients of $E_i$ knowing the initial values $1$ and $-24$ as above and the 
recursion given by (\ref{raman}).   

%%%%%%%%%%%%%%%%%%%%%%%%%%%%%%%%%%%%%%%%%%%%
\subsection{Schwarz map}
The multivalued function 
$$
p: \C\to \uhp,\ 
\tau \mapsto \frac{\int_{\delta_1}\frac{dx}{y}}{\int_{\delta_2}\frac{dx}{y}}=
i \frac{F(\frac{1}{6},\frac{5}{6},1|1-\tau)}
{F(\frac{1}{6},\frac{5}{6},1|\tau)}
$$
is called the Schwarz map. We summarize its global behavior in the following proposition:
\begin{prop}
\label{1nov10}
 Let 
$$
U:=\{z\in \C \mid \Re(z)<\frac{1}{2}\}\backslash \{z\in\R \mid z\leq 0\}.
$$
and consider the branch of the Schwarz map in $U$ which has pure imaginary values in $0<\tau<\frac{1}{2}$. Its image is the interior 
of the classical 
fundamental domain of the action of $\SL 2\Z$ in $\uhp$ depicted in Picture (\ref{fundo}). 
Its analytic continuation result in the triangulation of
$\uhp$ as in Picture (\ref{fundo}).  
\end{prop}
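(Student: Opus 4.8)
The plan is to carry out the classical Schwarz analysis of $p$, which is the quotient of two independent solutions of the hypergeometric Picard--Fuchs system (\ref{No1}) (in the variable $\tau$), regular--singular at $\tau=0,1,\infty$. First I would record that, being such a quotient, $p$ is a local biholomorphism onto $\uhp$ away from $0,1,\infty$: this is the content of Proposition \ref{14oct2010} and equivalently follows from the non--vanishing of the Wronskian, i.e. the Legendre relation (\ref{parham2}). The image of a fundamental region in the $\tau$--line will then be a circular--arc triangle whose vertices and angles are dictated by the local exponents, and the global triangulation will be produced by reflecting this triangle across its geodesic sides.

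Two symmetries of the explicit formula $p(\tau)=i\,F(\frac{1}{6},\frac{5}{6},1|1-\tau)/F(\frac{1}{6},\frac{5}{6},1|\tau)$ drive the computation. Since the hypergeometric series has real coefficients, $F(\bar w)=\overline{F(w)}$, whence $p(\bar\tau)=-\overline{p(\tau)}$: conjugation of the $\tau$--plane corresponds to reflection of $\uhp$ in the imaginary axis. Replacing $\tau$ by $1-\tau$ interchanges the two factors and gives $p(1-\tau)=-\frac{1}{p(\tau)}=\mat{0}{-1}{1}{0}\cdot p(\tau)$, so the involution $\tau\mapsto 1-\tau$ realizes $S=\mat{0}{-1}{1}{0}\in\SL2\Z$ (this is the period incarnation of the isomorphism $E_{-\psi}\cong E_{\psi}$ of \S\ref{hypergeometricsection}). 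On the segment $0<\tau<\frac{1}{2}$ both factors are positive, so $p$ is purely imaginary; with $p(\frac{1}{2})=i$ and $p(0^{+})=i\infty$ (the value $F(\frac{1}{6},\frac{5}{6},1|1)$ diverges because $c-a-b=0$), this segment maps onto the geodesic of the imaginary axis from $i$ up to $i\infty$.

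Next I would read $p$ off on $\partial U$. On the line $\Re\tau=\frac{1}{2}$ one has $1-\tau=\bar\tau$, and the two symmetries force $|p(\tau)|=1$; this line therefore maps into the unit circle $|z|=1$, the lower arc of $\partial\F$, with $p(\frac{1}{2})=i$. The two banks of the slit $(-\infty,0]$ are exchanged by conjugation and map to the mirror vertical geodesics $\Re z=-\frac{1}{2}$ and $\Re z=+\frac{1}{2}$, meeting the unit circle at the corners $e^{2\pi i/3}$ and $e^{\pi i/3}$ and running up to the cusp $p(0)=i\infty$. The local exponent differences $|1-c|=0$ at $\tau=0$, $|c-a-b|=0$ at $\tau=1$ and $|a-b|=\frac{2}{3}$ at $\tau=\infty$ show the monodromy is parabolic at the first two points (equivalent under $S$ to the single cusp $i\infty$ of $\F$) and elliptic of order three at $\tau=\infty$; a Gauss--Bonnet count gives total area $\pi-\frac{2\pi}{3}=\frac{\pi}{3}=\mathrm{area}(\F)$. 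Assembling the three boundary pieces identifies the image of $U$ with the interior of $\F$, the segment $(0,\frac{1}{2})$ mapping onto the imaginary axis that cuts $\F$ into its two $(2,3,\infty)$ triangles, which are the images of $U\cap\{\pm\Im\tau>0\}$.

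Finally, for the triangulation I would invoke Schwarz reflection: continuing $p$ across an edge of $\F$ reflects the adjacent triangle in the corresponding geodesic, and iterating produces the group generated by the reflections in the sides of the $(2,3,\infty)$ triangle, whose orientation--preserving subgroup is $\PSL2\Z$; its translates of $\F$ tile $\uhp$ exactly as in Picture (\ref{fundo}). The main difficulty is the global control of this multivalued continuation: one must verify that the developing map closes up without overlaps, i.e. that the monodromy group is all of $\PSL2\Z$ and that the local angle $\frac{2\pi}{3}$ at $\tau=\infty$ is correctly split into the two $\frac{\pi}{3}$ corners of $\F$ once the branch cut is taken into account. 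I expect to settle this either through the area count above together with the explicit monodromy generators $S$ and $T$, or, most cleanly, by quoting the $q$--expansion argument of \S\ref{torelli}, which already shows that the induced map $\overline{T/G}\to\bar S$ on compactified quotients is a biholomorphism, and hence that $p$ is a genuine tessellating developing map.
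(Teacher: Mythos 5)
Your proposal is correct and follows essentially the paper's own route: the boundary facts you derive from the symmetries $p(\bar\tau)=-\overline{p(\tau)}$ and $p(1-\tau)=-1/p(\tau)$ (pure imaginary values on $(0,\frac{1}{2})$ with $p(\frac{1}{2})=i$ and $p(0^{+})=i\infty$, modulus one on $\Re\tau=\frac{1}{2}$ with corner limits, constant real part $\pm\frac{1}{2}$ on the two banks of the slit) are exactly the items of the exercise the paper cites as one ingredient, and your concluding appeal to the $q$-expansion argument of \S\ref{torelli} for global injectivity is the paper's other stated ingredient. The additional Schwarz-reflection, exponent-difference and Gauss--Bonnet material is a sound classical supplement, but once the global biholomorphism of \S\ref{torelli} is quoted it is not needed.
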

Basic ingredients of the proof are the global injectivity of the period map discussed in \S\ref{torelli} and the following 
exercise:
\begin{figure}[t]
\begin{center}
\includegraphics[width=0.5\textwidth]{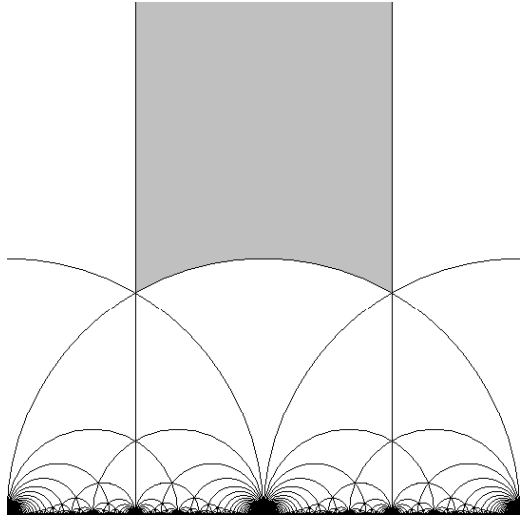}
\caption{Fundamental domain}
\label{fundo}
\end{center}
\end{figure}
\begin{exe}
Let $p$ be the branch of the Schwarz map described in Proposition \ref{1nov10}. Prove the following:
\begin{enumerate}
 \item 
$$
\lim_{\tau\in\R,\ \tau\to 0^+} p(\tau)=+\infty.
$$
\item
$$
|p(\frac{1}{2}+ix)|=1,\ x\in \R.
$$
\item
$$\lim_{x\in\R,\ x\to \pm \infty} p(\frac{1}{2}+ix)=\pm \frac{1}{2}+\frac{\sqrt{3}}{2}.$$
\item
The analytic continuation of $p$ from the upper half (resp. lower half) of $\C$ to $\R^{-}$ has the constant real 
part $\frac{1}{2}$ (resp. $-\frac{1}{2}$).

\end{enumerate}

\end{exe}

% According to the discussion in \S\ref{torelli} two values of 
% $p$ at  $\tau$ and $\tau'$ corresponding to cycles $\delta_1,\delta_2\in H_1(E_\tau,\Z)$ and $\delta_1',\delta_2'\in  
% H_1(E_{\tau'},\Z)$ are the same in $\SL 2\Z\backslash \uhp$ if and only if 
% eithr 1. $\tau=\tau'$ and there us $A\in\SL 2\Z$ such that $A[\delta_1,\delta_2]^\tr=[\delta_1',\delta_2']^\tr$ or 2. 
% $\tau=-\tau'$ and there is  $A\in\SL 2\Z$ such that $A[\delta_1,\delta_2]^\tr=[f_*(\delta_1'),f_*(\delta_2')]^\tr$, where $f_*$ is 
% the map induced by $E_{\tau'}\to E_{\tau}$ in the first homologies. This implies that two values $z_1$ and $z_2$ of $p$ are the same
%    
% $p$ are in We consider the branch of $p$ restricted to 
% $U:=$It is multi valued becuase making monodromy around $\tau=0$ and $\tau=1$ makes its values. Let us
% restrict $F$ on $\uhp$. Its image is the triangle depicted in \ref{}. Its analytic continuation result in the triangulization of 
% $\uhp$.

%%%%%%%%%%%%%%%%%%%%%%%%%%%%%%%%%%%%%%%%%%%%%%%%%%%%%%%%%%%%%%%%%%%%%%%%%%%%%%%%%%%%%%%%%%%%%%%%%%%%%%

\subsection{Comparison theorem}

Now, we are in a position to prove that the algebraic and analytic notions of quasi-modular forms are equivalent.
\begin{theo}
\label{main}
 The differential graded algebra of quasi-modular forms  in the Poincar\'e upper 
half plane together with the differential operator $\diff{}$ is isomorphic to 
the  graded differential algebra of quasi-modular forms defined in 
\S \ref{qmfsection} together with the differential operator $\Ra_\Gamma$.
\end{theo}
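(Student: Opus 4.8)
The plan is to construct the isomorphism directly from the period map of \S\ref{permap}. Given an analytic quasi-modular form $f\in\mf{n}{m}$, Proposition \ref{27oct2010} attaches to it a holomorphic function $F:=\phi(f)$ on $\pedo$ which is $\Gamma$-invariant, and hence descends to $\Gamma\backslash\pedo$; concretely $\phi$ is the homogenization $f\mapsto x_3^{-m}f(x_1/x_3)+(\text{lower differential order corrections})$, which is manifestly a homomorphism for the weight grading. Pulling back along $\per:T_\Gamma\to\Gamma\backslash\pedo$, I would set
$$
\Psi:\mf{}{}\longrightarrow \mf{}{}(\Gamma),\qquad \Psi(f):=\per^{*}\phi(f)=\phi(f)\circ\per .
$$
Since both $\phi$ and $\per^{*}$ are ring homomorphisms respecting the weight grading, $\Psi$ is a graded algebra homomorphism, and the entire content of the theorem is that $\Psi$ is bijective and intertwines the two differential operators.

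First I would verify that $\Psi(f)$ satisfies the functional equations defining an algebraic quasi-modular form. Writing $g=\mat{k}{k'}{0}{k^{-1}}\in G$ and using the $G$-equivariance $\per(t\bullet g)=\per(t)\cdot g$ of Proposition \ref{14oct2010}(2) together with the transformation law (\ref{7feb}) for $F=\phi(f)$, one obtains
$$
\Psi(f)(t\bullet g)=F(\per(t)\cdot g)=k^{-m}\sum_{i=0}^{n}\bn ni {k'}^{i}k^{i}F_i(\per(t))=k^{-m}\sum_{i=0}^{n}\bn ni {k'}^{i}k^{i}\Psi(f_i)(t),
$$
which is exactly (\ref{jabr}) with associated functions $\Psi(f_i)$; specializing $k'=0$ and $k=1$ recovers (\ref{gmaction}) and (\ref{gaaction}). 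Thus the $G_m$- and $G_a$-equivariance transfer formally through the period map, and the associated lower-order forms correspond under $\Psi$ as well.

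Next come bijectivity and the growth condition. For $\Gamma=\SL 2\Z$ the Torelli analysis of \S\ref{torelli} shows that $\per$ descends to a global biholomorphism $p:T/G\to\SL 2\Z\backslash\uhp$, and combined with its $G$-equivariance this makes $\per$ itself a biholomorphism $T_\Gamma\to\Gamma\backslash\pedo$; a general congruence group $\Gamma$ reduces to this by the covering argument of Exercise \ref{14out2010}. Hence $\per^{*}$ is a bijection on holomorphic functions, and composing with the bijection $\phi$ of Proposition \ref{27oct2010} makes $\Psi$ bijective on the underlying function spaces, surjectivity onto genuine quasi-modular forms being obtained by running the equivariance computation above in reverse. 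What remains is to match the growth conditions: the finite-growth-at-infinity requirement on the $F_i$ along each $\tilde\uhp_\alpha$ of Proposition \ref{27oct2010}(3) must be identified with the requirement that $\Psi(f_i)$ extend holomorphically across the discriminant locus $S=\{\Delta=0\}$. Here the local behaviour of the period map at the cusp is essential: the explicit computation (\ref{dilma}) shows that near $\Delta=0$ the period map sends the algebraic coordinate $\tau$ to $q=e^{2\pi i z}$ by an invertible holomorphic map fixing $0$, so holomorphy in $q$ at the origin (finite growth) is equivalent to holomorphic extension in $\tau$ across $S$. I expect this cusp matching to be the main obstacle, being the one place where the formal transfer of structure must be replaced by genuine analytic control of the period integrals.

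Finally I would check that the differential operators correspond. By Proposition \ref{14oct2010}(3) the period map pushes $\Ra_\Gamma$ forward to $-X$, while by the last remark of Proposition \ref{27oct2010} the operator $\diff{}$ corresponds under $\phi$ to the vector field $X$ of (\ref{2.09.08}); since $\per$ is a biholomorphism we may use the elementary identity $v(\per^{*}F)=\per^{*}((\per_{*}v)F)$ to compute
$$
\Ra_\Gamma(\Psi(f))=\per^{*}\bigl((\per_{*}\Ra_\Gamma)\,\phi(f)\bigr)=-\per^{*}\bigl(X\,\phi(f)\bigr)=-\Psi\!\left(\diff{f}\right).
$$
The spurious sign is harmless: both $\diff{}$ and $\Ra_\Gamma$ raise the weight by $2$, so the weight-grading automorphism $\sigma$ acting on the weight-$m$ piece by multiplication by $(\sqrt{-1})^{m}$ satisfies $\sigma\circ\Ra_\Gamma=-\Ra_\Gamma\circ\sigma$, and therefore $\sigma\circ\Psi$ is a graded algebra isomorphism intertwining $\diff{}$ with $\Ra_\Gamma$ exactly. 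This completes the identification of the analytic and algebraic differential graded algebras of quasi-modular forms.
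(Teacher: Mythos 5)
Your proposal is correct and takes essentially the same route as the paper, whose entire proof consists of the two sentences you have expanded: view quasi-modular forms as functions on $\Gamma\backslash\pedo$ via Proposition \ref{27oct2010}, and transport the structure through the period map, which is a global biholomorphism by the Torelli discussion of \S\ref{torelli}. One caveat about your final sign twist: with the natural homogenization $\phi(f)(x)=x_3^{-m}\sum_{i=0}^{n}\bn{n}{i}(x_4x_3)^{i}f_i(x_1/x_3)$ a direct computation (already visible for $n=0$, where the associated function of $\diff{f}$ is $mf$) gives $X\phi(f)=-\phi(\diff{f})$, i.e.\ the paper's assertion that $\diff{}$ corresponds to $X$ is itself off by a sign; this minus cancels against $\per_{*}\Ra_\Gamma=-X$, so $\Psi$ already intertwines $\diff{}$ with $\Ra_\Gamma$ and applying $\sigma$ would reintroduce the discrepancy --- in any case exactly one of $\Psi$, $\sigma\circ\Psi$ is the desired isomorphism depending on the sign convention in $\phi$, so the theorem stands either way.
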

%In order to prove this theorem we need the notion of period domain and period map. 
%\subsection{Proof of Theorem \ref{main}}
\begin{proof}
According to Proposition \ref{27oct2010}, quasi modular forms can be viewed as functions on $\Gamma\backslash \pedo$.
Now, the period map which is a biholomorphism gives us the desired isomorphism of 
algebras. 

%Under the period map  an $\mf{n}{m}$-function corresponds to a 
%regular function $p\in \O_{A_\Gamma(\C)}$ with the following property: There are $f_i\in \O_{A_\Gamma(\C)}$ such that
%(\ref{jabr}) in Introduction is valid. 
\end{proof}

\appendix
\section{Quasi-modular forms as sections of jet bundles}
\label{jetbundle}
In this appendix we explain the geometric interpretation of quasi-modular forms in terms 
of sections of jet bundles of tensor powers of line bundles on the moduli of elliptic curves.
The main ingredients of this appendix are taken from a private communication with Prof. P. Deligne. 
We leave to the reader the comparison of the material of this appendix with Lee's article \cite{lee09} and in particular
the notion of quasi-modular polynomial.
For simplicity, we work with elliptic curves over $\C$.

Let $\Gamma\subset \SL 2\Z$ be one of  the modular groups $\Gamma_0(N), \Gamma(N),
\Gamma_0(N)$.
As we noticed $M=\Gamma\backslash \uhp$ is the moduli space of elliptic curves over $\C$ with a certain torsion structure. 
Let $G$ be any algebraic group, 
for instance take  $G$ the multiplicative group $(\C^*,\cdot)$ or the group (\ref{2nov10}) for $\k=\C$. We usually identify 
$G$ with a linear subgroup of $\GL(n,\C)$, for some $n\in \N$, and hence assume that $G$ acts on $\C^n$.  
A $G$-automorphy factor on $\uhp$ is a map:
$$
j: \Gamma\times \uhp \to G 
$$
which satisfies
$$
j(AB,z)=j(A,Bz)j(B,z), \ A,B\in \Gamma,\ z\in\uhp.
$$
Any $G$-automorphy factor gives us a $G$-vector bundle on $M$ and vice verse: the quotient 
$\uhp\times \C^n/\sim$, where 
$$
(z,v)\sim (Az, j(A,z)v),\ \forall z\in \uhp,\  A\in\Gamma, \ v\in \C^ n,
$$ 
gives us a $G$-bundle
in $M$. For $G=(\C^ *,\cdot)$  we get line bundles on $M$. Holomorphic functions $f:\uhp \to\C^n$ with the functional equation
$$
f(Az)=j(A,z)f(z),\ A\in\Gamma, \ z\in\uhp
$$
are in one to one correspondence with holomorphic sections of the $G$-bundle associated to the automorphy factor $j$.

For $z\in \uhp$ we have the elliptic curve $\C/\langle 1,z\rangle$ and the one dimensional vector space 
$\C d\tau$, where $\tau$ is the canonical coordinate on $\C$. This can be also identified with ${\rm Lie}(E)^\vee$, the linear dual of the Lie algebra of $E$, that is, the linear  dual of 
the tangent space of $E$ at $0\in E$.
 This gives us a line bundle, say it $\omega$. The corresponding automorphy 
factor is given by 
\begin{equation}
\label{4nov10}
j(A,z)=(cz+d), \ A=\mat abcd\in \Gamma, \ z\in\uhp.
\end{equation}
(Exercise \ref{4.11.10},\ref{jj2}).
 We have also the canonical line bundle $\Omega$ of $M$ (dual of the tangent bundle of $M$) 
which is given by  the automorphy  factor $j(A,z)^2$. It follows that
\begin{equation}
\label{bh}
\Omega=\omega \otimes \omega.
\end{equation}

For a vector bundle $F\to M$ over a complex manifold $M$, the $n$-th jet bundle $J_nF$ of $F$ is defined as follows: the fiber 
$J_nF_{x}$ of 
$J_nF$ at $x\in M$ is defined to be the set of sections of $F$ in a neighborhood of $x$ modulo those which vanish at $x$ of order $n$.
In other words, $J_nF_{x}$ is the set of Taylor series  at $x$ of the sections of $F$ up to order $n$. A section of $J_nF$ in a small open set $U\subset M$ with a coordinate system 
$(z_1,z_2,\ldots,z_m)$ is a sum
$$
\sum_{0\leq k_1,k_2,\ldots, k_m\leq n}f_{k_1,k_2,\ldots,k_m}(z)(w_1-z_1)^{k_1}(w_2-z_2)^{k_2}\cdots (w_m-z_m)^{k_m},
$$
where $f_{k_1,k_2,\ldots,k_m}(z)$ are holomorphic sections of $F$ in $U$ and $w$ is an extra multi variable. 
%We have canonical projection inclusions
%$$
%F=F_0\rightarrow F_1\rightarrow F_2\rightarrow \cdots
%$$ 

Let $M:=\Gamma\backslash \uhp$ and $\omega^m:=\omega\otimes \omega\otimes\cdots\otimes$, $m$ times, be as before. A section of $J_n\omega^m$ corresponds to a sum
$$
F(z,w)=\sum_{i=0}^ n\frac{f_i(z)}{i!}(w-z)^i, \ z\in\uhp, \ w\in\C,
$$
where $f_i$'s are holomorphic functions on $\uhp$ such that
$$
F(Az,Aw)=F(z,w)j(A,w)^m+O((Aw-Az)^{n+1}).
$$
We perform $i$-times the derivation $\frac{\partial }{\partial (Aw)}=(cw+d)^2\frac{\partial }{\partial w}$ in both sides of the above equality and then put $w=z$.
We get functional equations for $f_i$'s:
$$
f_i(Az)=((cz+d)^2\frac{\partial }{\partial z})^ {(i)}(f(z)j(A,z)^m).
$$
Here $f=f_0$ and we redefine the derivation of $f_i$'s: $\frac{\partial f_i}{\partial z}:=f_{i+1}$ (the derivation in other terms is the usual one). Neglecting the growth 
condition for $f_i$'s we conclude that $f_i$ is a quasi-modular form of weight $m+2i$ and differential order $i$.

We discuss briefly the growth condition. From $j(\mat{1}{1}{0}{1},z)=1$ it follows that the line bundles $\omega$ and $\Omega$ have canonical extensions $\bar \omega$ and $\bar \Omega$ to 
$\bar M=\Gamma\backslash (\uhp\cup\Q)$. Sections of $\bar \omega^m$ are in one to one correspondence with modular forms of weight $m$. 
We end this section with the following affirmation:
the following map is a bijection
\begin{equation}
\label{omidpass}
\hbox{ global sections of  }  J_n\bar \omega^{m-2n}\ \  \to \mf {n}{m},
$$
$$
\sum_{i=0}^ n\frac{f_i(z)}{i!}(w-z)^i\mapsto f_n.
\end{equation}
% $$
% \sum_{i=0}^n \left (\sum_{k=0}^i\bn ik j^{(k)}(A,z)f_{i-k}(z)\right )(w-z)^i.
% $$
% where $ j^{(k)}(A,z)$ is the $k$-the derivation of $j$ with respect to $z$. In particular for $j=(cz+d)^{-1}$, the last term $f_n$ of $f$ satisfies the functional equation of a quasi modular forms

%There is a canonical  correspondance between the set of sections of the $n$-the jet bundle $\bar F^m$ and 
%the set of quasi modular forms of weight $m$ and differential order $n$. 

\begin{exe}
\label{4.11.10}
\begin{enumerate}
\item 
\label{jj2}
The automorphy factor associated to $\omega$ and $\Omega$ are respectively $j$ and $j^2$, where $j$ is given by (\ref{4nov10}).
%\item
%Describe the correspondence between $G$-automorphy factors and $G$-vector bundles. In the case of line bundles describe
%also the correspondence between modular forms of weight $m$ and sections of $\bar \omega^ m$. 
\item
The cohomology bundle $H$ on $\Gamma\backslash \uhp$ associates to each point $z\in\Gamma\backslash \uhp$ the two dimensional 
vector space $H^1(\C/\langle 1,z\rangle,\C)$. We have a canonical inclusion $\omega\subset H$ and an isomorphism of bundles
$H/\omega\cong\omega^{-1}$.
\item
Calculate the automorphy factor of $J_1\omega^{-1}$ and conclude that $J_1\omega^{-1}=H$.
\item
Prove that the Gauss-Manin connection induces an isomorphism
$
\omega\stackrel{\sim}{\to }\Omega\otimes H/\omega.
$
%\item
%Find the formula of the rank of the $n$-the jet bundle in terms of $n$, the rank of the original vector bundle and the dimension 
%of the base manifold.
%\item
%Describe the correspondence between sections of $J_n\bar \omega^{m-2n}$ and $\mf{n}{m}$. 
\item
Prove the bijection (\ref{omidpass}).
\end{enumerate}
\end{exe}

%%%%%%%%%%%%%%%%%%%%%%%%%%%%%%%%%%%%%%%%%%%%%%%%%%%%%%%%%%%%%%%55

\section{Examples of quasi-modular forms as generating functions}
\label{8nov2010}
The field $\Q(E_2,E_4,E_6)$ generated by three Eisenstein series
\begin{equation}
%\label{eisenstein}
E_{2k}=1-\frac{4k}{B_{2k}}\sum_{n=1}^\infty \left (\sum_{d\mid n}d^{2k-1}\right )q^{n}, \ \  k=1,2,3,
\end{equation}
where $B_2=\frac{1}{6},\ B_4=-\frac{1}{30},\ B_6=\frac{1}{42},\ \ldots$ are Bernoulli numbers,
%$(b_1,b_2,b_3)=(-24, 240, -504)$, 
and its algebraic closure contain many interesting generating functions.  We list some of them without proofs. It is 
convenient to use the weights
$$
\deg(E_{2k})=2k,\ k=1,2,3
$$
for the ring $\Q[E_2,E_4,E_6]$. 

%Recall that the pull-back of the coordinates $t_k, \ k=1,2,3$ by the composition  
%$\uhp\to\SL 2\Z\backslash\pedo\stackrel{\per^{-1}}{\to} T$ is $a_kE_{2k}$, where
%$$
%(a_1,a_2,a_3)=(-\frac{2\pi i}{12},12(\frac{2\pi i}{12})^2,-8(\frac{2\pi i}{12})^3).
%$$

%%%%%%%%%%%%%%%%%%%%%%%%%%%%%%%%%%%%%%%%%%%%%%%%%%%%%%5
\subsection{Ramified elliptic curves}
\label{ellipticramified}
Let $E$ be a complex elliptic curve and let $p_1,\dots,p_{2g-2}$ be 
distinct points of $E$, where $g>1$. We will discuss the case $g=1$ separately. 
The set $X_{g}(d)$ of  equivalence classes of holomorphic maps $\phi: C\to E$ of degree $d$ from compact connected
smooth  complex curves $C$ to $E$, which have  only one double
ramification point over each point $p_i\in E$ and no other ramification
points, is finite.  By the Hurwitz formula the genus of $C$ is equal to $g$. 
Define
\begin{equation}
\label{pb}
 N_{g,d}:=\sum_{[\phi]\in X_{g}(d)} 
\frac{1}{|\,\text{Aut}\,\,(\phi)\,\,|}
\end{equation}
and 
$$
F_g:=\sum_{d=1}^\infty N_{g,d}{q^d}.
$$
After  R.~Dijkgraaf, M.~Douglas, D.~Zagier, M.~Kaneko, see \cite{dij95, kaza95},  we know that
$$
F_g\in\Q[E_2,E_4,E_6],
$$
For instance, 
$$
F_2(q)=\frac{1}{103680}(10E_2^3-6E_2E_4-4E_6),\
$$
$$
 F_3(q)=\frac{1}{35831808}(-6E_2^6+15E_2^4E_4-12E_2^2E_4^3+7E_4^3+4E_2^3E_6-12E_2E_4E_6+4E_6^2).
$$
For $g=1$ we do note have ramification points and for $\phi: C\to E$ as before, ${\text Aut}(\phi)$ consists of translations by elements of 
$\phi^{-1}(0)$ and so $\#{\text Aut}(\phi))=d$. Therefore, $d\cdot N_{d,1}=\sum_{i|d}i$ is the number of group plus Riemann surface morphisms $C\to E$ of degree $d$. In this case we have the contribution of constant maps which is given by $N_{1,0}\log q=-\frac{1}{24}\log q$. Therefore,
%We have 
%$$
%F_1:=N_{1,0}\log %q+\sum_{d=1}^\infty N_{1,d}%{q^d}=\log \eta
%$$
%where $\eta$ is the Dedekind eta function. We have also
$$
q\frac{\partial F_1}{\partial q}=-\frac{1}{24}E_2.
$$ 
\begin{exe}\rm
Calculate $N_{2,2}$ and $N_{3,3}$ from the formulas for $F_2(q)$ and $F_3(q)$ respectively and prove that in fact they
satisfy (\ref{pb}).
 
\end{exe}

\subsection{Elliptic curves over finite field}
\def\F{\mathbb F}
For an elliptic curve $E$ over a finite field $\F_q$, $q=p^n$ and $p$ a prime number, a theorem of Hasse tells us that there is an algebraic integer 
$\alpha\in \bar\Q$  with $|\alpha|=\sqrt{q}$ and such that
$$
\#E(F_q)=q+1-(\alpha+\bar\alpha).
$$ 
The expression
$$
\sigma_k(q)=-\sum_{E/\F_q}
\frac{(\alpha^{k+1}-\bar\alpha^{k+1})/(\alpha-\bar\alpha)}{\#Aut_{\F_q}(E)}
$$
can be considered as average of the quantities $(\alpha^{k+1}-\bar\alpha^{k+1})/(\alpha-\bar\alpha)$ for all elliptic curves over $\F_q$. Here, $Aut_{\F_q} (E)$ is the group of $\F_q$-automorphisms of E. Let 
%$$
%(27t_3^2-t_2^3)(\int_{\delta_2}\frac{dx}{y})^{12}= 
%(27t_3^2-t_2^3)=
%$$
$$
%-(\frac{2\pi i}{12})^6
\frac{1}{1728}(E_4^3-E_6^2)=q\prod_{n=1}^{\infty}
(1-q^n)^{24}=
(q-24q^2+253q^3-3520q^4+4830q^5+\cdots+\tau(n)q^n+\cdots).
$$
It turns out that
$$
\sigma_{10}(p)=\tau(p),\ \forall p\text{ prime.}
$$
See the article of van der Geer \cite{gee08} for more history behind this phenomenon.

%%%%%%%%%%%%%%%%%%%%%%%%%%%%%%%%%%%%%%%%%%%%%%%%%%%
\subsection{Monstrous moonshine conjecture}
We write the $q$ expansion of the  $j$-function
$$
j=%\frac{t_2^3}{t_2^3-27t_3^2}=
1728\frac{E_4^3}{E_4^3-E_6^2}=
$$
$$
q^{-1}+744+
196884 q+ 21493760q^2 + 864299970{q}^3 + \cdots.
$$
In 1978 MacKay noticed that $196884=196883+1$ and $196883$ is the number of dimensions in which the Monster group can be most simply represented. Based on this observation
J.H. Conway and S.P. Norton in 1979 formulated the 
Monstrous moonshine conjecture which relates all the coefficients in the $j$-function to the representation dimensions of the Monster group.
In 1992 R. Borcherds solved this conjecture and got fields medal. See \cite{gan06} for more information on this conjecture.

\subsection{Modularity theorem}
Let $E$ be an elliptic curve defined by the equation $g(x,y)=0$, where $g$ is a polynomial with integer coefficients and with the discriminant $\Delta\not =0$. For instance,  take $a_2$ and $a_3$  integers with $\Delta:=a_2^3-27a_3^2\not=0$ and let $E: y^2=4x^3-a_2x-a_3$.
Let also $p$ be a prime, $N_p$ be the number of solutions of $E$ working modulo $p$ and $a_p(E):=p-N_p$ (we have not counted the point at infinity $[0;1;0]$).
A version of modularity theorem says that there is an element $f=\sum_{n=0}^\infty a_n q^n$ in the algebraic 
closure of $\Q(E_4,E_6)$ such that 
$a_p=a_p(E)$  for all primes $p\not|\Delta$. In fact $f$ is a cusp form of weight $2$ associated to some 
$\Gamma_0(N)$. Here, $N$ is the conductor of $E$. This was originally known as Taniyama-Shimura conjecture and it is solved by 
A. Wiles, R. Taylor, C. Breuil, B. Conrad, F.Diamond. For further information see \cite{dish05}. As an example
consider $E: y^2+y=x^3-x^2$. This has conductor $N=11$. The corresponding modular form is
$$
\eta(q)^2\eta(q^{11})^{2}=q-2q^2-q^3+2q^4+q^5+2q^6-2q^7-2q^9-2q^{10}+q^{11}-2q^{12}+4q^{13}+\cdots,
$$
where $\eta(q)=q^\frac{1}{24}\prod_{n=1}^{\infty}
(1-q^n) $ is the Dedekind eta function.
For further examples see \cite{hoffman}.

\subsection{Rational curves on $K3$ surfaces}
A $K3$ surface by definition is a simply connected complex surface with trivial canonical bundle.  Projective $K3$ surfaces fall into countable many families ${\cal F}_k,\ k\in \N$.  A surface in ${\cal F}_k$
admits a $k$-dimensional linear system $|L|$ of curves  of genus $k$. 
 A curve $C$ in $|L|$ depends on $k$-parameters and so if we put $k$ conditions on that curve, we would get an isolated  curve and so we can count the number of such curves. 
For instance take $k=n+g,\ n,g\in\N$ and assume that $C$ passes through $g$ generic fixed points and it is singular with $n$ nodal singularities (and hence the geometric genus of $C$ is $g$). 
In fact for a generic $K3$ surface the number $N_n(g)$ of such curves turns out to be finite. 
The generating function for the numbers $N_n(0)$, that is the number of rational curves in the linear system $|L|$,  was first discovered by 
Yau-Zaslow (1996),  Beauville (1999) and G\"ottsche (1994):
$$
\sum_{n=0}^\infty N_{n}(0)q^{n}=
\frac{1728q}{E_4^3-E_6^2}=1 + 24q + 324q^2 + 3200q^3 + 25650q^4 + 176256q^5 + 1073720q^6 +\cdots
$$
(by definition $N_0(0)=1$).  For instance,  a smooth quadric $X$ in  $\P^ 3$ is $K3$ and for such a generic $X$ the number of planes tangent to $X$  in three points is $3200$.
  
For arbitrary genus  $g$ we have the following generalization  of Bryan-Leung (1999):
$$
\sum_{n=0}^\infty N_{n}(g)q^{n}=
(\frac{-1}{24}\frac{\partial E_{2}}{\partial q})^{g}\frac{1728q}{E_4^3-E_6^2}.
$$
Some first coefficients for $g=1$ and $g=2$ are given respectively by
$$
1 + 30q + 480q^2 + 5460q^3 + \cdots,\ 1 + 36q + 672q^2 + 8728q^3 + \cdots.
%F3 = 1 + 42q + 900q2 + 13220q3
$$
%where $D=q\cdot d/dq$. 
%For an arbitray surface we have conjectural formulas for the generating functions due to G\"otsche which is recently solved in \cite

%%%%%%%%%%%%%%%%%%%%%%%%%%%%%%%%%%%%%%%%%%%%%%%%%%%%%%%%%%%%%%%%%

%\bibliography{biblio}
%\bibliographystyle{plain}

\def\cprime{$'$} \def\cprime{$'$} \def\cprime{$'$}

\end{document}